\newtheorem{theorem}{Theorem}[section]
\newtheorem{lemma}[theorem]{Lemma}
\newtheorem{proposition}[theorem]{Proposition}
\newtheorem{corollary}[theorem]{Corollary}
\newtheorem*{theorem*}{Theorem}
\newtheorem*{proposition*}{Proposition}
\theoremstyle{definition}
\newtheorem{definition}[theorem]{Definition}
\newtheorem{example}[theorem]{Example}
\newtheorem*{claim}{Claim}
\theoremstyle{remark}
\newtheorem{remark}[theorem]{Remark}
\newcommand{\B}{\mathbb{B}}
\newcommand{\N}{\mathbb{N}}
\newcommand{\Q}{\mathbb{Q}}
\newcommand{\R}{\mathbb{R}}
\newcommand{\C}{\mathbb{C}}
\newcommand{\Ok}{\mathcal{O}}
\newcommand{\Spec}{\mathrm{Spec}}
\newcommand{\Sp}{\mathrm{Sp}}
\newcommand{\oo}{{\circ\circ}}
 \newenvironment{itemize*}
  {\begin{itemize}[topsep=-\parskip+\jot,itemsep=-\parskip-\jot]}
  {\end{itemize}}
\newenvironment{enumerate*}
  {\begin{enumerate}[label=(\alph*),topsep=-\parskip+\jot,itemsep=-\parskip-\jot]}
  {\end{enumerate}}
\newenvironment{enumerate**}
  {\begin{enumerate}[label=(\roman*),topsep=-\parskip+\jot,itemsep=-\parskip-\jot]}
  {\end{enumerate}}
\title{A non-archimedean definable Chow theorem}
\date{\today}
\author{Abhishek Oswal}
\begin{document}
\setlist[description]{font=\normalfont\itshape\textbullet\space}
\maketitle

\begin{abstract}
    Peterzil and Starchenko have proved the following surprising generalization of Chow's theorem: A closed analytic subset of a complex algebraic variety that is definable in an o-minimal structure, is in fact an algebraic subset. In this paper, we prove a non-archimedean analogue of this result.
\end{abstract}

\section{Introduction}
The theory of o-minimality provides a framework for a `tame topology' on $\R$, imposing some strict finiteness properties, and in many instances providing a way to interpolate between the algebraic and analytic topologies. The introduction of the theory of o-minimality to the complex analytic category was initiated in the work of Peterzil--Starchenko \cite{starchenko2010tame} who develop a theory of holomorphic functions and analytic manifolds in the category of definable objects in an o-minimal structure. A celebrated result in their work, is the Definable Chow Theorem \cite[Theorem 5.1]{starchenko2008nonstandard}, which states that a closed analytic subset of a complex algebraic variety that is simultaneously definable in an o-minimal structure is in fact an algebraic subset. 

A natural question that one may ask is whether an analogue of the algebraization result of Peterzil and Starchenko exists in the non-archimedean setting. The main result of this paper provides an affirmative answer to this question.

In the non-archimedean setting, a number of analogues of o-minimality have been studied, all with the broad goal of creating a framework that would isolate a class of subsets satisfying `tame' topological and finiteness properties. From our perspective, an important class of such subsets is furnished by the so-called rigid subanalytic sets developed by Lipshitz \cite{lipshitz1993rigid} and further studied in a series of influential works by Lipshitz and Robinson (\cite{lipshitz2000dimension}, \cite{lipshitz2000model}, \cite{lipshitz2000rings}). The rigid subanalytic sets in a sense form the analogue of the o-minimal structure $\R_\mathrm{an}$ consisting of the collection of restricted subanalytic subsets of $\R^n$. We refer the reader to \autoref{background-rigid-subanalytic} for an overview of the notion of rigid subanalytic sets.

As a first step towards the definable Chow theorem we prove the following strong version of the Riemann extension theorem in the context of rigid subanalytic sets.

\begin{theorem*}[A rigid subanalytic Riemann extension theorem, \autoref{riemann-extension-subanalytic}]
Let $K$ be an algebraically closed field that is complete with respect to a non-trivial, non-archimedean absolute value $|\cdot| : K \rightarrow \R_{\geq 0}$. Suppose $X$ is a separated and reduced rigid analytic space over $K.$ Let $Y \subseteq X$ be a closed analytic subvariety of $X$ that is everywhere of positive codimension. Then any analytic function $f \in \mathcal{O}_X(X\setminus Y)$ whose graph is a locally subanalytic subset of $X(K) \times K$ extends to a meromorphic function on all of $X,$ i.e. $f \in \mathcal{M}(X).$
\end{theorem*}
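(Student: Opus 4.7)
The plan is to reduce the theorem to a local growth estimate for $f$ near $Y$, deduce this estimate from the subanalyticity hypothesis by means of a Łojasiewicz-type inequality, and conclude by invoking the classical non-archimedean Riemann extension theorem.

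\textbf{Reduction.} Meromorphy is local, and meromorphic functions on a normal rigid space extend across any closed analytic subset of codimension $\geq 2$. So after replacing $X$ by its normalization and passing to a small affinoid neighborhood $U = \mathrm{Sp}(A)$ of a point of $Y$, we may assume that $A$ is reduced and normal and that $Y \cap U \subseteq V(h)$ for a single non-zero-divisor $h \in A$. Suppose we can produce constants $N \in \mathbb{N}$ and $C > 0$ with
\[
|h(x)|^N \, |f(x)| \leq C \qquad \text{for all } x \in U(K) \setminus Y(K).
\]
Then $h^N f$ is a bounded analytic function on $U \setminus (Y \cap U)$; by the classical non-archimedean Riemann extension theorem (Bartenwerfer--L\"utkebohmert), it extends to an analytic function $g \in \mathcal{O}_U(U)$, and $f = g/h^N$ is meromorphic on $U$.

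\textbf{Growth estimate via subanalyticity.} Introduce the bounded auxiliary function $\phi \colon U(K) \to [0,1]$ defined by $\phi(x) = 1/(1 + |f(x)|)$ on $U(K) \setminus Y(K)$, extended to $Y(K)$ by the lower-semicontinuous envelope of this expression; equivalently, one may work with the closure of the graph of $f$ inside $U(K) \times \mathbb{P}^1(K)$. Since the graph of $f$ is subanalytic in $U(K) \times K$ and the absolute value $|\cdot| \colon K \to \R$ is compatible with the Lipshitz--Robinson subanalytic structure, $\phi$ is a bounded subanalytic function on the closed bounded set $U(K)$. At any point of $U \setminus Y$ the function $f$ is analytic, hence locally bounded, so $\phi > 0$ there; thus $\phi^{-1}(0) \subseteq Y(K) \subseteq |h|^{-1}(0)$. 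The Łojasiewicz inequality for subanalytic functions, available in the rigid setting through \cite{lipshitz2000dimension, lipshitz2000rings}, then yields $N, C$ with $|h(x)|^N \leq C \, \phi(x)$ on $U(K)$, which rearranges to the desired bound $|h|^N |f| \leq C$.

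\textbf{Main obstacle.} The delicate step is securing a version of the rigid subanalytic Łojasiewicz inequality in a form robust enough for $\phi$, which need not be continuous across $Y$. The natural workaround is to work throughout with the closure of the graph of $f$ in $U(K) \times \mathbb{P}^1(K)$: this closure is a closed subanalytic subset of a closed bounded set, and the desired inequality reduces to a Łojasiewicz statement about projections of this closed set, set up so that the relevant functions are continuous. A secondary issue is the clean local-to-global passage, together with the extension across the codimension-$\geq 2$ part of $Y$ by Hartogs-type arguments for meromorphic functions on normal rigid spaces; both are standard but must be handled with some care.
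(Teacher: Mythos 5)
Your strategy is genuinely different from the paper's: the paper reduces, via Noether normalization, the Levi extension theorem (to discard $\mathrm{Sing}(Y)$), and Kiehl's tubular neighbourhood theorem, to the model case $X = Z\times\B^1$, $Y = Z\times\{0\}$, and there analyzes the Laurent expansion $f=\sum_i a_i t^i+\sum_{j>0}b_j t^{-j}$ directly, using that discrete subanalytic sets are finite (so each $b_j(z)$ is eventually zero) plus a Baire category argument to get uniform vanishing of the $b_j$. You instead try to prove a uniform growth estimate $|h|^N|f|\le C$ near $Y$ and quote Bartenwerfer's bounded Riemann extension theorem. If the growth estimate were available, this would be an attractive shortcut that avoids Kiehl's theorem entirely; note that in the model case your estimate is literally equivalent to the eventual vanishing of the $b_j$, i.e.\ to the whole content of the paper's key lemma.

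The gap is precisely the \L{}ojasiewicz step, which you assert rather than prove. Neither \cite{lipshitz2000dimension} nor \cite{lipshitz2000rings} contains a \L{}ojasiewicz inequality, and within the paper's axiomatics nothing is assumed about definable subsets of $\Gamma^2$, so no such inequality is available for free. What your argument actually needs is the following: the set $E=\{(|h(x)|,|f(x)|): x\in U\setminus Y\}\subseteq\Gamma^2$ is definable with each fibre $E_\gamma$ ($\gamma>0$) bounded (this part is fine, since $\{|h|=\gamma\}$ is an affinoid on which $f$ is analytic), and one must conclude a uniform bound $\delta\le C\gamma^{-N}$ on $E$. That conclusion requires knowing that the value-group sort is stably embedded with induced structure that of an o-minimal ordered $\Q$-vector space (so that $\gamma\mapsto\sup E_\gamma$ is piecewise of the form $c\gamma^{q}$, $q\in\Q$); this is plausibly extractable from the Lipshitz--Robinson quantifier elimination, but it is the entire mathematical content of your proof and cannot be waved through with an incorrect citation. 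Two smaller problems: the auxiliary function $\phi=1/(1+|f|)$ is not definable, since addition on the value group is not part of the structure (use $\min(1,|f(x)|^{-1})$ or work with the closure of the graph in $U\times\mathbb{P}^1$ as you suggest); and the passage through the normalization requires descending meromorphy along a finite map, which is exactly the content of the paper's \autoref{finite-descent-riemann-extension-lemma} and should be invoked explicitly rather than labelled standard.
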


We also prove an analogue of the Definable Chow theorem in the rigid subanalytic setting.  In fact, we prove this in the setting of what we refer to as `tame structures'. The definition of a tame structure follows very closely the definition of an o-minimal structure. In \cite{lipshitz1996rigid}, Lipshitz--Robinson prove that rigid subanalytic subsets of the one-dimensional unit disk $K^\circ$ are none other than the subsets that are Boolean combinations of disks. Thus, it is natural to consider arbitrary structures on $K^\circ$ such that the definable subsets of the closed one-dimensional unit disk $K^\circ$ are the Boolean combinations of (open or closed) disks. This is (in an imprecise sense) what we refer to as a `tame structure'. It is worth mentioning that our definition of a tame structure is in fact very closely related to the notion of a $C$-minimal field introduced by Macpherson and Steinhorn \cite{macpherson1996variants}. We refer the reader to \autoref{rem:relation-to-c-minimality} for a more detailed remark on how this relates to $C$-minimality.

After going through the preliminary definitions of tame structures, we then prove some basic results in the dimension theory of tame structures that are needed for the proof of the Definable Chow theorem. The two key results are the invariance of dimension under definable bijections and the Theorem of the Boundary, both of which have been previously proved for rigid subanalytic subsets.

\begin{proposition*}[Invariance of dimension under definable bijections, \autoref{definable-bijection-dimension}]
Let $X \subseteq (\Ok_{\C_p})^m$ and $Y \subseteq (\Ok_{\C_p})^n$ be definable sets (in a fixed tame structure) and $f : X \rightarrow Y$ a definable bijection. Then $\dim(X) = \dim(Y).$
\end{proposition*}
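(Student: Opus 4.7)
The plan is to reduce the claim to a graph argument that relies on two basic properties of definable dimension in a tame structure, both of which should be part of the preliminary dimension theory set up earlier in the paper: (i) a definable surjection does not increase dimension, so $\dim(g(A)) \leq \dim(A)$ for any definable map $g$; and (ii) a fiber-dimension inequality --- if $g : A \twoheadrightarrow B$ is a definable surjection all of whose fibers have dimension at most $d$, then $\dim(A) \leq \dim(B) + d$. Both are direct analogues of standard facts in o-minimal dimension theory, and in the rigid subanalytic case they are established by Lipshitz--Robinson.

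Granted (i) and (ii), the proof is formal. Consider the graph $\Gamma_f \subseteq X \times Y$ together with its two coordinate projections $\pi_X : \Gamma_f \to X$ and $\pi_Y : \Gamma_f \to Y$. Since $f$ is a bijection, both $\pi_X$ and $\pi_Y$ are themselves definable bijections, and in particular each of their fibers is a singleton and hence has dimension $0$. Applying (i) to $\pi_X$ and $\pi_Y$ yields $\dim(X), \dim(Y) \leq \dim(\Gamma_f)$, while applying (ii) with $d = 0$ yields $\dim(\Gamma_f) \leq \dim(X)$ and $\dim(\Gamma_f) \leq \dim(Y)$. Combining gives $\dim(X) = \dim(\Gamma_f) = \dim(Y)$, as required.

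The main obstacle is establishing the fiber-dimension inequality (ii) in the tame-structure setting. In the o-minimal case this rests on cell decomposition together with generic continuity of definable functions, but in the non-archimedean/totally disconnected setting continuity is a much weaker constraint and one must instead argue via a stratification result adapted to tame structures, with strata built from Boolean combinations of disks in each coordinate. Once such a decomposition is available --- either developed directly from the tame-structure axioms or bootstrapped from the rigid subanalytic theory of Lipshitz--Robinson --- property (ii) follows by induction on $\dim(B)$: one stratifies $B$ so that on a dense definable open the fibers of $g$ have constant dimension and admit a definable trivialization, and the complement is handled by the induction hypothesis. The invariance of dimension under definable bijections then drops out of the graph argument above with no further work.
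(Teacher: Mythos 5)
Your graph reduction is fine as far as it goes: since $\pi_X$ and $\pi_Y$ are coordinate projections, the inequalities $\dim(X)\leq\dim(\Gamma_f)$ and $\dim(Y)\leq\dim(\Gamma_f)$ are immediate from the definition of dimension via coordinate projections, so property (i) costs nothing here. The genuine gap is property (ii). It is not part of the preliminary dimension theory developed in the paper, and in the tame-structure setting it is not a routine import from o-minimality: no cell decomposition or definable stratification theorem is available here, and the paper explicitly remarks (in the discussion of $C$-minimality) that even in the Haskell--Macpherson framework the invariance of dimension under definable bijections is not known in general. Worse, observe that applying (ii) with $d=0$ to the bijective coordinate projection $\pi_X:\Gamma_f\to X$ is itself an instance of the proposition you are trying to prove (it asserts $\dim(\Gamma_f)\leq\dim(X)$ for a definable bijection that happens to be a coordinate projection). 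So your argument does not eliminate the difficulty; it reduces the general statement to the special case of coordinate-projection bijections and then defers that case to an unproven stratification result. The sketch you give for (ii) --- stratify $B$ so that fibers have constant dimension and admit a definable trivialization --- is precisely the kind of statement that would need to be established from the tame-structure axioms, and doing so is at least as hard as the proposition itself.

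The paper avoids (ii) entirely. Setting $d=\dim(X)$, it finds a closed polydisk $B$ in the image of the projection of $X$ to $d$ coordinates, and then uses countable-basis and Baire-category arguments (\autoref{countable-interior}) to shrink $B$ until the projection $\mathrm{pr}_{(1,\ldots,d)}:X\to B$ is quasi-finite with constant fiber cardinality; the key point is that if some fiber coordinate had infinite image one could manufacture a $(d+1)$-dimensional disk inside a projection of $X$, contradicting $\dim(X)=d$. Then \autoref{quasi-finite-graph-decomposition} produces a definable section $s:B'\hookrightarrow X$ over a smaller disk, and \autoref{injective-definable-dimension} applied to the injective definable map $f\circ s$ gives $\dim(Y)\geq\dim(f(s(B')))\geq d$. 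In other words, rather than bounding $\dim(\Gamma_f)$ from above by fiber dimensions, the paper bounds $\dim(Y)$ from below by exhibiting inside $X$ a definable copy of a $d$-dimensional disk and pushing it forward injectively. If you want to salvage your approach, you would need to prove your property (ii) for quasi-finite definable maps first, and the natural proof of that is essentially the argument the paper gives.
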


\begin{theorem*}[Theorem of the Boundary, \autoref{theorem-of-boundary}]
Let $X \subseteq (\Ok_{\C_p})^m$ be a definable set. Then $\dim(\mathrm{Fr}(X)) < \dim(X),$ where $\mathrm{Fr}(X)$ denotes the frontier of $X$ in $(\Ok_{\C_p})^m$, that is $\mathrm{Fr}(X) = \mathrm{cl}_{(\Ok_{\C_p})^m}(X)\setminus X$.
\end{theorem*}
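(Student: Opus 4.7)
The plan is to proceed by induction on $m$, reducing the general case to the one-dimensional case where the defining hypothesis of a tame structure applies directly.

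For the base case $m = 1$: any definable $X \subseteq \Ok_{\C_p}$ is a finite Boolean combination of open and closed disks (with singletons allowed as degenerate closed disks of radius $0$). Since every disk in a non-archimedean field is clopen in the metric topology, one can write, after simplification, $X = (U \setminus F_1) \sqcup F_2$ with $U \subseteq \Ok_{\C_p}$ clopen, $F_1 \subseteq U$ finite, and $F_2 \cap U = \emptyset$ finite. A direct computation then yields $\overline{X} = U \cup F_2$, so $\mathrm{Fr}(X) \subseteq F_1$ is finite. If $U \ne \emptyset$ then $\dim X = 1 > 0 \ge \dim \mathrm{Fr}(X)$; if $U = \emptyset$ then $X$ is finite, hence closed, and $\mathrm{Fr}(X) = \emptyset$.

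For the inductive step, the plan is to partition $X$ into finitely many pairwise disjoint definable cells $X = C_1 \sqcup \cdots \sqcup C_k$ via a cell decomposition theorem for tame structures (presumably available from the preliminary dimension theory developed earlier in the paper). Since $\overline{X} = \bigcup_i \overline{C_i}$, one has $\mathrm{Fr}(X) \subseteq \bigcup_i \mathrm{Fr}(C_i)$, reducing the problem to showing $\dim(\mathrm{Fr}(C_i)) < \dim(C_i) \le \dim(X)$ for each cell. After permuting coordinates, each $C_i$ is either (a) the graph of a definable continuous function $f \colon D \to \Ok_{\C_p}$ over some cell $D \subseteq (\Ok_{\C_p})^{m-1}$, so that $\dim C_i = \dim D$ and $\mathrm{Fr}(C_i)$ is controlled by $\mathrm{Fr}(D)$ together with the locus where $f$ fails to extend continuously to $\overline{D}$; or (b) a ``disk bundle'' $\{(x,y) : x \in D,\ y \in B_x\}$ with $B_x$ a disk depending definably on $x$, so that $\dim C_i = \dim D + 1$ and $\mathrm{Fr}(C_i)$ is controlled by $\mathrm{Fr}(D)$ together with the locus where the radius of $B_x$ degenerates. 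In each case the inductive hypothesis applied to $D$, together with the base case applied fiberwise and \autoref{definable-bijection-dimension} used to convert between fiberwise and total dimensions, yields the required strict inequality.

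The main obstacle is the cell decomposition itself: one must ensure that the definable functions parametrizing the cells are continuous on each cell and that their discontinuity (or radius-collapse) loci are of strictly smaller dimension than the cell's base. This is the non-archimedean analog of the $C$-cell decomposition of Lipshitz--Robinson for rigid subanalytic sets, and deriving it from only the tame-structure axioms is where the bulk of the technical work appears to lie. Granted such a decomposition, the theorem of the boundary then follows the classical o-minimal template essentially verbatim.
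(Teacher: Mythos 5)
Your base case $m=1$ is fine, but the inductive step has a genuine gap: it rests entirely on a cell decomposition theorem for tame structures that the paper neither proves nor assumes, and that does not follow in any easy way from the axioms (which only constrain definable subsets of $R^1$ and $\Gamma^1$). You acknowledge this yourself, but the acknowledgement does not close the gap; the decomposition into graphs and definable ``disk bundles'' with continuity on each piece and lower-dimensional bad loci \emph{is} the theorem-of-the-boundary-level difficulty in this setting. Indeed, the paper's own remark on $C$-minimality points out that even basic facts such as invariance of dimension under definable bijections are not known for general $C$-minimal structures, and the cell decompositions of Haskell--Macpherson in that context are substantially more involved than o-minimal cells (and are exactly what the author is avoiding in order to keep the argument elementary). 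Moreover, even granting a decomposition into graphs, your step ``$\mathrm{Fr}(C_i)$ is controlled by $\mathrm{Fr}(D)$ together with the locus where $f$ fails to extend continuously to $\overline{D}$'' silently invokes a generic-continuity statement (that the discontinuity locus of a definable function has smaller dimension than its domain), which is again unproved and nontrivial here.

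The paper's proof takes a different and more elementary route that you should compare against. It argues by contradiction: if $\dim(\mathrm{Fr}(X))=\dim(X)=d$, one projects to $d$ coordinates and, by repeated shrinking of a closed polydisk $D$ using second countability of $K$ and Baire category (\autoref{countable-interior}), reduces to the situation where both $X$ and $\mathrm{Fr}(X)$ are finite disjoint unions of graphs of definable sections over $D$ (\autoref{quasi-finite-graph-decomposition} -- a local, after-shrinking substitute for cell decomposition that requires no continuity). One then separates $X$ from each branch of $\mathrm{Fr}(X)$ by a definite distance $|\varpi^{m_0}|$ on a smaller disk, forces the section $s$ defining $X$ to have oscillation $<|\varpi^{m_0}|$ there (\autoref{uniform-continuity-over-small-disk} -- small oscillation on \emph{some} subdisk, which is all that is needed, rather than genuine continuity), and derives a contradiction from the ultrametric inequality: a frontier point $(b,g_1(b))$ would have to be approximated by points $(b',s(b'))$ with $\norm{s(b')-g_1(b)}<|\varpi^{m_0}|$ while $\norm{s(b)-g_1(b)}>|\varpi^{m_0}|$, forcing $\norm{s(b')-s(b)}>|\varpi^{m_0}|$. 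To repair your proposal you would either have to prove the cell decomposition you invoke (a major undertaking) or replace it by this kind of shrink-and-separate argument.
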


Next we prove the following theorem which may be viewed as a definable version of a classical theorem of Liouville in complex geometry. 

\begin{proposition*}[A non-archimedean definable Liouville's theorem, \autoref{upgraded-polynomial-lemma-v2}]
Let $X$ be a reduced scheme of finite type over $\C_p$ and denote by $X^\text{an}$ the rigid analytification of $X.$ Let $f \in H^0(X^\text{an},\mathcal{O}_{X^{\text{an}}})$ be a global rigid analytic function on $X^\text{an}$ such that the graph of $f$ viewed as a subset of $X(\C_p) \times \C_p$ is definable. Then $f \in H^0(X,\mathcal{O}_X).$
\end{proposition*}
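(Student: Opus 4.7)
My plan is to compactify $X$, invoke the Riemann extension theorem to extend $f$ meromorphically across the boundary, apply rigid analytic GAGA to conclude that $f$ is a rational function on $X$, and finally deduce from the analyticity of $f$ throughout $X^{\mathrm{an}}$ that this rational function is in fact everywhere regular.

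First I would reduce to the case where $X$ is integral. The irreducible components $X_i$ of the reduced finite-type scheme $X$ are themselves reduced and finite type, and (via the definability of algebraic subvarieties in the tame structure) each set $X_i(\C_p)\times \C_p$ is definable, so the graph of $f|_{X_i^{\mathrm{an}}}$ remains definable. By Nagata compactification combined with Chow's lemma, I would then choose a projective integral scheme $\overline{X}$ over $\C_p$ containing $X$ as a dense open with $D := \overline{X}\setminus X$ closed of positive codimension. Viewing $f$ as an analytic function on $X^{\mathrm{an}} = \overline{X}^{\mathrm{an}}\setminus D^{\mathrm{an}}$, an open in the separated, reduced, proper rigid space $\overline{X}^{\mathrm{an}}$, I would invoke \autoref{riemann-extension-subanalytic} to obtain a meromorphic extension $\tilde f \in \mathcal{M}(\overline{X}^{\mathrm{an}})$.

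Next I would appeal to rigid GAGA: for $\overline{X}$ projective and integral over $\C_p$, the natural inclusion $K(\overline{X}) \hookrightarrow \mathcal{M}(\overline{X}^{\mathrm{an}})$ is an isomorphism, via the standard argument of algebraizing (by GAGA for coherent sheaves) the coherent ideal sheaf of denominators of $\tilde f$. Hence $\tilde f \in K(\overline{X}) = K(X)$. To upgrade this from rationality to regularity on $X$, I would work locally at closed points: at each $x \in X(\C_p)$ (all closed points are of this form since $\C_p$ is algebraically closed), $f$ is by hypothesis a germ in $\mathcal{O}_{X^{\mathrm{an}},x}$, while the faithful flatness of the analytic local ring $\mathcal{O}_{X^{\mathrm{an}},x}$ over the algebraic local ring $\mathcal{O}_{X,x}$ yields the equality $\mathcal{O}_{X,x} = \mathcal{O}_{X^{\mathrm{an}},x}\cap K(X)$ inside $K(X^{\mathrm{an}})$. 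One concludes that $f \in \mathcal{O}_{X,x}$ for every closed point $x$, hence $f \in H^0(X,\mathcal{O}_X)$.

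The main obstacle I anticipate is bridging the gap between ``definable in the tame structure'' (as hypothesized here) and ``locally subanalytic'' (as demanded by the Riemann extension theorem quoted above). If the tame structure strictly extends the rigid subanalytic one, one must either establish a version of the Riemann extension theorem directly in the tame setting—mirroring the subanalytic proof while substituting the tame dimension theory and the Theorem of the Boundary already developed earlier in the paper—or show that the graph of a rigid analytic function on an algebraic variety which is definable in a tame structure is automatically locally subanalytic, perhaps by exploiting the Boolean-combination-of-disks characterization of definable subsets of $\mathcal{O}_{\C_p}$. The remaining ingredients (Nagata compactification, rigid GAGA, and the analytic/algebraic comparison at closed points) are standard.
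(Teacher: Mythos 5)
Your route (compactify, extend $f$ meromorphically across the boundary via the Riemann extension theorem, algebraize by rigid GAGA, then descend to regularity at closed points by faithful flatness) is genuinely different from the paper's, which never compactifies and never invokes GAGA: the paper instead proves the case $X=\mathbb{A}^n$ directly by counting zeros of one-variable restrictions (a definable entire function with infinitely many nonzero Taylor coefficients would have an infinite discrete definable zero set) together with a Baire category argument, reduces smooth affine $X$ to $\mathbb{A}^d$ by a generically \'etale Noether normalization and Cramer's rule over the \'etale locus, and handles general reduced $X$ by the faithfully flat descent lemma (\autoref{rigid-function-algebraic-on-dense-open-implies-algebraic}). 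The paper explicitly notes that the Riemann extension theorem is \emph{not} used in the proof of the definable Chow theorem, and the reason is exactly the obstacle you flag at the end.

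That obstacle is a genuine gap, not a loose end. \autoref{riemann-extension-subanalytic} is proved only for the rigid subanalytic structure, and its proof does not transfer to an arbitrary tame structure: it repeatedly uses that affinoid subdomains, elements of $A\{t\}$ such as the regular part $\sum_i a_i t^i$ of the Laurent expansion, and the pieces of Kiehl's tubular neighbourhood cover are all \emph{subanalytic}, whereas a tame structure in the sense of \autoref{def:structure-non-archimedean} need not contain any of these sets (the axioms only force polynomials, the valuation, and Boolean combinations of disks in one variable to be definable). Conversely, a tame structure may strictly contain the subanalytic sets, so tame-definability of the graph of $f$ does not imply local subanalyticity, and the one-variable Boolean-combination-of-disks characterization gives no control in higher arity. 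So neither of your two proposed repairs is available, and the theorem as stated (for an arbitrary tame structure) does not follow from your argument; at best you obtain the special case of the subanalytic structure, and even there you import a rigid GAGA statement ($\mathcal{M}(\overline{X}^{\mathrm{an}})=K(\overline{X})$) that the paper does not develop. Two smaller points: your reduction to integral $X$ silently assumes that a global analytic function regular on each irreducible component is regular on $X$, which for non-seminormal intersections requires an argument (this is precisely \autoref{rigid-function-algebraic-on-dense-open-implies-algebraic}, proved by descent along $B\to\widehat{B}$, and your closed-point faithful-flatness step is essentially that lemma, so this is repairable); and when applying \autoref{riemann-extension-subanalytic} you should pass through \autoref{subanalytic-in-proper-rigid} to convert algebraic definability of the graph into local subanalyticity on $\overline{X}^{\mathrm{an}}\times\mathbb{A}^{1,\mathrm{an}}$, keeping in mind the incompatibility recorded in \autoref{rem:incompatibility-with-analytification*} (only the direction you need survives).
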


Finally, we prove the non-archimedean version of the definable Chow theorem.

\begin{theorem*}[The non-archimedean definable Chow theorem, \autoref{definable-chow-theorem}]
Let $V$ be a reduced algebraic variety over $\C_p,$ and let $X \subseteq V^\mathrm{an}$ be a closed analytic subvariety of the rigid analytic variety $V^\mathrm{an}$ associated to $V$, such that $X \subseteq V(\C_p)$ is definable in a tame structure on $\C_p$. Then $X$ is algebraic.
\end{theorem*}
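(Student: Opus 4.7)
The plan is to induct on $d = \dim V$, reducing at each stage to the case where $V$ is affine irreducible and $X$ is Zariski-dense in $V$, and then to prove $X = V^\an$ by constructing a nonzero polynomial equation for $X$ via a Weierstrass/symmetric-function argument combined with the non-archimedean definable Liouville theorem.

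First I would reduce to the case that $V$ is an irreducible affine variety over $\C_p$, by passing to a finite affine open cover of $V$ and to irreducible components (the latter reduction relying on the dimension theory to propagate definability and closed analyticity to each component). Fix an embedding $V \hookrightarrow \mathbb{A}^n_{\C_p}$. The base case $d = 0$ is immediate. For the inductive step, let $Y \subseteq V$ denote the Zariski closure of $X$; it is an algebraic subvariety. If $Y \subsetneq V$, then $\dim Y < d$, and applying the inductive hypothesis to $X \subseteq Y^\an$ (after decomposition of $Y$ into affine irreducible components) shows that $X$ is algebraic. So we may assume $X$ is Zariski-dense in $V$. Since $V$ is irreducible and $\C_p$ is algebraically closed, $V^\an$ is irreducible as a rigid analytic space, and it suffices to prove that $\dim_{\mathrm{def}} X = d$; this will force $X = V^\an$ by comparison of definable and analytic dimensions.

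To prove $\dim_{\mathrm{def}} X = d$, I would suppose for contradiction that $d' := \dim_{\mathrm{def}} X < d$ and construct a polynomial $P \in \C_p[x_1,\ldots,x_n]$ with $P|_X \equiv 0$ but $P \notin I(V)$, directly contradicting Zariski-density. After a generic $\C_p$-linear change of coordinates on $\mathbb{A}^n$, the projection $\pi : \mathbb{A}^n \to \mathbb{A}^{n-1}$ onto the first $n-1$ coordinates restricts on $X$ to a map whose fibers are finite of uniformly bounded cardinality $N$ over a Zariski-open subset $U \subseteq \mathbb{A}^{n-1}$; the existence of such a projection uses the invariance of dimension under definable bijections together with the theorem of the boundary. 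Over $U$, the elementary symmetric functions $e_1, \ldots, e_N$ of the fiber $\pi^{-1}(x') \cap X$ are analytic functions on $U$ whose graphs are definable (since $X$ is). Extending the $e_j$ to meromorphic functions on all of $\mathbb{A}^{n-1}$ and writing $e_j = P_j / Q$ with $P_j, Q$ global analytic on $\mathbb{A}^{n-1}$ and having definable graphs, the non-archimedean definable Liouville theorem forces $P_j, Q \in \C_p[x_1,\ldots,x_{n-1}]$. The resulting Weierstrass polynomial
\[
P(x', y) = Q(x') y^N - P_1(x') y^{N-1} + \cdots + (-1)^N P_N(x') \in \C_p[x_1,\ldots,x_n]
\]
then vanishes on $X$; it is nonzero since its leading coefficient in $y$ is $Q \not\equiv 0$, and a generic choice of coordinates ensures $P \notin I(V)$, yielding the desired contradiction.

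The hardest step will be the meromorphic extension of the symmetric functions $e_j$ across the frontier of $U$: in the rigid subanalytic case this is furnished directly by the rigid subanalytic Riemann extension theorem proved earlier in the paper, but for a general tame structure one must supply the extension by another route, presumably by combining uniform bounds on fiber cardinality (coming from the theorem of the boundary and the finiteness properties of definable subsets of $\C_p$), local applications of Liouville to ratios of analytic functions, and possibly an affinoid exhaustion of $\mathbb{A}^{n-1}$. Secondary technicalities include arranging all the various genericity conditions simultaneously (finite fibers of uniform cardinality $N$, $P \notin I(V)$, preservation of definability under projection and symmetric-function formation) and verifying that the equality $\dim_{\mathrm{def}} = \dim_{\mathrm{an}}$ holds for closed analytic definable subsets, which underwrites the reduction of $X = V^\an$ to an assertion about the definable dimension.
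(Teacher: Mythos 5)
Your overall skeleton (project to a hyperplane, form the characteristic polynomial of the last coordinate over the image, algebraize its coefficients via the definable Liouville theorem) matches the paper's, but the way you have set up the induction breaks the key step. You induct only on $\dim V$, whereas the paper inducts on $\dim(X)+n$ with $n$ the ambient affine dimension. This matters because the symmetric functions $e_1,\ldots,e_N$ of the fiber live on the \emph{image} $Y=\pi(X)\subseteq \mathbb{A}^{n-1}$, not on a Zariski-open subset of $\mathbb{A}^{n-1}$: unless $\dim X=n-1$, the set $\pi(X)$ is a nowhere Zariski-dense closed analytic subset of $\mathbb{A}^{n-1}$, the fiber over a generic point of any Zariski-open $U\subseteq\mathbb{A}^{n-1}$ is empty, and any meromorphic extension of the $e_j$ to $\mathbb{A}^{n-1}$ is identically zero --- your Weierstrass polynomial degenerates to $Q(x')y^N$ and yields no contradiction. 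To run the symmetric-function argument one must first know that $Y=\pi(X)$ is \emph{algebraic}; it is a closed analytic definable subset of $\mathbb{A}^{n-1}$, so this is precisely an instance of the theorem one ambient dimension down, which is why the ambient dimension must enter the induction. Your induction on $\dim V$ gives no purchase on $\pi(X)$: to invoke it you would need $\pi(X)$ to lie in an algebraic variety of dimension $<\dim V$, and the Zariski closure of a closed analytic set can far exceed its analytic dimension (think of the graph of a non-polynomial entire function). One then works over the \'etale locus $U\subseteq Y$ of the finite map $X\to Y^{\mathrm{an}}$; the definability of that locus (\autoref{etale-locus}) requires the fiber cardinality to equal, not merely be bounded by, the generic value $N$, a point you gloss over but which is needed for the $e_j$ to be analytic at all.

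The second problem is the step you yourself flag as hardest: meromorphic extension of the $e_j$ across the frontier of $U$. In the correct setup this step is not needed. The paper applies the definable Liouville theorem (\autoref{upgraded-polynomial-lemma-v2}) to the coefficients as honest analytic functions on $U^{\mathrm{an}}$ for $U$ a Zariski-open subvariety of the (inductively algebraic) $Y$, concludes that $X\cap(W\times K)$ is algebraic, and disposes of the leftover piece $\pi^{-1}(Y\setminus U)$, which has strictly smaller dimension, by induction. No Riemann extension theorem is invoked --- the paper explicitly notes that \autoref{riemann-extension-subanalytic} is not used in the proof of the definable Chow theorem --- which is fortunate because, as you observe, that theorem is only available for rigid subanalytic sets and not for arbitrary tame structures. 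Finally, the detour through ``$X$ Zariski-dense $\Rightarrow\dim X=\dim V\Rightarrow X=V^{\mathrm{an}}$'' is sound in principle but buys nothing: once the induction is set up on the ambient dimension, the characteristic-polynomial argument algebraizes $X$ directly.
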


\subsection*{Outline of the paper}

In \autoref{background-rigid-subanalytic} we provide the reader with some background on the theory of rigid subanalytic sets as developed by Lipshitz and Robinson. This section is mostly expository in nature, and the reader may find most of the results presented in the chapter to be proved in the fantastic papers \cite{lipshitz1988isolated}, \cite{lipshitz1993rigid}, \cite{lipshitz1996rigid}, \cite{lipshitz2000rings}, \cite{lipshitz2000model} and \cite{lipshitz2000dimension}. In \autoref{sec:riemann-extension-subanalytic}, we prove the strong version of the Riemann extension theorem in the rigid subanalytic category. We note that this version of the Riemann extension theorem is not used in the proof of the definable Chow theorem. 

In \autoref{ch:tame-structures} we introduce the notion of tame structures, and proceed to develop some preliminary dimension theory in this context. The theorem of the boundary and the invariance of dimensions under definable bijections are proved here. In \autoref{sec:recollections-on-dim-theory-of-rigid-varieties}, we collect some lemmas on the general dimension theory of rigid analytic varieties that shall be used in the proof of the definable Chow theorem. Most of the results in this section should be well-known, nonetheless complete proofs are provided for lack of a coherent reference.

In \autoref{proof}, we proceed to the proof of the non-archimedean definable Chow theorem. 


\subsection*{Acknowledgements}
It is a great pleasure to thank Jacob Tsimerman for several helpful conversations and suggestions and for asking me the question of whether a non-archimedean version of Peterzil and Starchenko's definable Chow theorem could be proved. 

The proof of the non-archimedean definable Chow theorem is based on an alternate proof of Peterzil and Starchenko's theorem. I learned of this alternate proof at a lecture series given by Benjamin Bakker. I am very grateful to him for the inspiring set of lectures that went a long way in helping me understand the work of Peterzil and Starchenko.

I would also like to thank Angelo Vistoli for the suggestion to use faithfully flat descent in order to prove \autoref{rigid-function-algebraic-on-dense-open-implies-algebraic} below. It is also a pleasure to thank Fran\c{c}ois Loeser for pointing me to Haskell and Macpherson's work on $C$-minimality and for a careful reading of an earlier version of the manuscript.

\section{Rigid subanalytic sets and a Riemann extension theorem}\label{background-rigid-subanalytic}
In this section, we provide a brief overview of subanalytic geometry in the non-archimedean setting. Analogous to the real case, one starts by considering sets that are locally described by Boolean combinations of sets of the form $\{\underline{x} : |f(\underline{x})| \leq |g(x)|\}$ where $f, g$ are analytic functions. As in the real case, for such sets to define a reasonable `tame topology' one must restrict the class of analytic functions. In the non-archimedean setting, such a theory has been developed in a series of works by Leonard Lipshitz and Zachary Robinson based on the analytic functions in the `ring of separated power series'.  In the first part of this section, we summarize some of the main results of their works.

In \autoref{sec:riemann-extension-subanalytic}, we prove a strong version of the Riemann extension theorem for rigid subanalytic sets.

\subsection{Rings of Separated Power Series}
 Suppose $K$ is an algebraically closed field complete with respect to a non-trivial, non-archimedean absolute value $| \cdot | : K \rightarrow \R_{\geq 0}$. We denote by $K^\circ$ the valuation ring consisting of power bounded elements of $K$, and $K^\oo$ denotes the maximal ideal of $K^\circ$ consisting of the topologically nilpotent elements of $K$. We denote by $\widetilde{K} := K^\circ/K^\oo$ the residue field of $K$ and $\widetilde{\,} : K^\circ \rightarrow \widetilde{K}$ shall denote the reduction map.  
 
\begin{definition}
A valued subring $B \subseteq K^\circ$ is called a \emph{$B$-ring} if every $x \in B$ with $|x| = 1$ is a unit in $B$.
\end{definition}

\begin{remark}
Every $B$-ring is a local ring with $B \cap K^\oo$ being its unique maximal ideal. 
\end{remark}

\begin{definition}
A $B$-ring $B \subseteq K^\circ$ is said to be \emph{quasi-Noetherian} if every ideal $\mathfrak{a} \subseteq B$ has a `quasi-finite generating set' i.e. a zero-sequence $\{x_i\}_{i \in \N} \subseteq \mathfrak{a}$ such that any element $a \in \mathfrak{a}$ can be written in the form $a = \sum_{i \geq 0} b_i x_i$ for some $b_i \in B$. We note that we are not insisting that every infinite sum of the form $\sum_{i \geq 0} b_i x_i$ also lies in $\mathfrak{a}.$
\end{definition}

\begin{proposition}[Properties of quasi-Noetherian rings] We have the following properties of quasi-Noetherian rings:
\begin{enumerate}
    \item A Noetherian $B$-subring of $K^\circ$ is quasi-Noetherian. 
    \item If $B$ is quasi-Noetherian and $\{a_i\}_{i \in \N} \subseteq K^\circ$ is a zero-sequence then \[B[a_0,a_1,\ldots]_{\{a\in B[a_0,a_1,\ldots]: |a|=1\}}\] is also quasi-Noetherian. 
    \item The completion of a quasi-Noetherian subring $B \subseteq K^\circ$ (with respect to the restriction of the absolute value $|\cdot|$ to $B$) is also a quasi-Noetherian subring of $K^\circ$. 
    \item The value semi-group $|B \setminus \{0\}| \subseteq \R_{>0}$ is a discrete subset of $\R_{>0}.$
\end{enumerate} 
\end{proposition}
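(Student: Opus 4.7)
The plan is to handle each of the four parts separately, with (1) being essentially a tautology, (4) a short contradiction argument, and (2) and (3) requiring a more careful bookkeeping of approximations that exploits the zero-sequence condition.

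For (1), given a Noetherian $B$-subring and any ideal $\mathfrak{a} \subseteq B$, I would pick a finite generating set $x_1, \dots, x_n$, pad it by zeros to a zero-sequence, and observe that any element of $\mathfrak{a}$ is already a finite $B$-linear combination of the $x_i$, which is a special case of a quasi-finite sum. For (4), I would argue by contradiction: suppose $r > 0$ is an accumulation point of $|B\setminus\{0\}|$, and choose $b_i \in B$ with $|b_i|$ strictly monotone with limit $r$. Then the ideal $\mathfrak{a} = (b_i)_{i\in\N}$ cannot admit a quasi-finite generating set, because any zero-sequence $\{x_j\} \subseteq \mathfrak{a}$ has only finitely many terms with $|x_j| \geq r/2$; but expressing $b_i = \sum_j c_j x_j$ with $c_j \in B \subseteq K^\circ$ forces some $|x_j| \geq |b_i|$, which is impossible once $|b_i|$ exceeds the supremum of the finitely many eligible $|x_j|$.

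For (3), given an ideal $\hat{\mathfrak{a}} \subseteq \hat{B}$, I would produce a quasi-finite generating set by an approximation-and-replacement scheme. For each $\hat{a} \in \hat{\mathfrak{a}}$ I would write $\hat{a} = \lim b_n$ with $b_n \in B$ such that $|\hat{a} - b_n| \leq |\hat{a}|/2^n$, yielding $b_n \in \hat{\mathfrak{a}} \cap B$ after a standard adjustment (using that $\hat{a} - b_n$ is itself small). Then set $\mathfrak{a} = \hat{\mathfrak{a}} \cap B$, apply quasi-Noetherianness of $B$ to obtain a zero-sequence $\{x_i\}$ generating $\mathfrak{a}$ quasi-finitely over $B$, and verify that the same $\{x_i\}$ quasi-finitely generates $\hat{\mathfrak{a}}$ over $\hat{B}$: the completeness of $\hat{B}$ allows each approximating sum $\sum_{i \leq N} \hat{b}_i x_i$ (with $\hat{b}_i$ Cauchy in $\hat{B}$) to be passed to the limit.

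For (2), let $B' = B[a_0, a_1, \ldots]_{\{a : |a|=1\}}$ and $B_n = B[a_0,\dots,a_n]_{\{a:|a|=1\}}$. I would first show inductively that each $B_n$ is quasi-Noetherian by reducing to the one-variable case: given an ideal $\mathfrak{a}' \subseteq B_{n-1}[a_n]_{\{a:|a|=1\}}$, stratify by leading-coefficient ideals in powers of $a_n$ and use the quasi-Noetherianness of $B_{n-1}$ to produce generators at each level, assembling them into a single zero-sequence using that $|a_n| < 1$. Then for an ideal $\mathfrak{a}' \subseteq B' = \bigcup_n B_n$, patch together quasi-finite generators of each $\mathfrak{a}' \cap B_n$, discarding those absorbed by earlier stages, and use $|a_n| \to 0$ to ensure that new generators introduced at level $n$ can be chosen of absolute value at most $|a_n|$ times a uniform constant, so that the concatenated list is itself a zero-sequence.

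The main obstacle is step (2): the delicate point is that an ideal of $B'$ is not simply the union of ideals of the $B_n$, so one must show that an element of $\mathfrak{a}' \cap B_n$ admits a quasi-finite expansion whose tail lies in the generators contributed by later stages, and this patching only terminates in a \emph{zero-sequence} of generators because of the hypothesis $|a_i| \to 0$. Controlling absolute values of generators across infinitely many levels is where the zero-sequence assumption is truly used and where the bulk of the bookkeeping lies.
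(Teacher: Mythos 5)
The paper itself never proves this proposition --- it is quoted as background from the Lipshitz--Robinson / Bosch--G\"untzer--Remmert theory of quasi-Noetherian rings (\cite{lipshitz2000rings}; see also BGR \S 1.8) --- so your proposal must be measured against the standard proofs. Part (1) is fine. Part (4), however, is only half an argument: the estimate $|b_i|\le \max_j|c_jx_j|\le\sup_j|x_j|$ yields a contradiction only when the values $|b_i|$ \emph{increase} to the accumulation point $r$. If the value set accumulates at $r$ only from above ($|b_i|\downarrow r$ with $|b_i|>r$), the finitely many generators $x_j$ with $|x_j|\ge r$ can perfectly well dominate every $|b_i|$, and no contradiction results. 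This case cannot be waved away: there exist multiplicative sub-semigroups of $(0,1]$ that are non-discrete but all of whose accumulation points are approached only from above, so no purely value-semigroup argument will close the gap. The standard proof is different in kind: one takes a quasi-finite generating system $\{x_j\}$ of the maximal ideal $B\cap K^{\circ\circ}$, uses a representation $a=\sum_j b_jx_j$ with the extra normalization $|b_jx_j|\le|a|$ (built into the definition in the references, though omitted from the paper's streamlined version), deduces $|a|=|b_{j_0}||x_{j_0}|$ for some $j_0$, and concludes by iteration that every value $<1$ is a \emph{finite product} of the $|x_j|$; discreteness follows since only boundedly many factors, each drawn from a finite list, can occur above a given $\epsilon$.

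Parts (3) and (2) also have real gaps. In (3), the step ``yielding $b_n\in\hat{\mathfrak{a}}\cap B$ after a standard adjustment'' is unjustified: writing $b_n=\hat a-(\hat a-b_n)$, there is no reason the small error $\hat a-b_n$ lies in $\hat{\mathfrak{a}}$, so $b_n$ need not lie in $\hat{\mathfrak{a}}$ at all, and $\hat{\mathfrak{a}}\cap B$ need not be dense in $\hat{\mathfrak{a}}$; a quasi-finite generating system of $\hat{\mathfrak{a}}\cap B$ then controls a possibly much smaller ideal than $\hat{\mathfrak{a}}$. The actual proof is a successive-approximation argument whose convergence rests on the discreteness of the value set, i.e.\ on part (4). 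In (2), the assertion that generators newly appearing at level $n$ ``can be chosen of absolute value at most $|a_n|$ times a uniform constant'' is essentially the content of the theorem and is nowhere established; moreover the proposed stratification by leading coefficients breaks down when $a_n$ is algebraic over the previous stage, since elements of $B_{n-1}[a_n]$ then have no well-defined degree or leading coefficient. In short: (1) is correct, (4) proves only the accumulation-from-below case, and (2)--(3) identify plausible intermediate objects but the density and size-control claims they rest on are precisely what has to be proved.
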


\begin{definition}
\begin{enumerate*}
    \item If $R$ is a complete, Hausdorff topological ring whose topology is defined by a system of ideals $\{\mathfrak{a}_i\}_{i \in I}$ we define the \emph{ring of convergent power series} $R\{x_1,\ldots,x_n\}$ with coefficients in $R$ as:\begin{align*}
    R\{x_1,\ldots,x_n\} := \{\sum_{\nu=(\nu_1,\ldots,\nu_n) \in \N^n} a_\nu x_1^{\nu_1}\ldots x_n^{\nu_n} \in &R\llbracket x_1,\ldots,x_n\rrbracket : \\ &\lim_{\nu_1+\ldots+\nu_n \rightarrow \infty}a_\nu = 0\}.
\end{align*} The topology on $R\{x_1,\ldots,x_n\}$ is defined by declaring $\{\mathfrak{a}_i\cdot R\{\underline{x}\}\}_{i \in \N}$ to be a fundamental system of neighbourhoods of $0$. With this topology $R\{x_1,\ldots,x_n\}$ is also a complete, Hausdorff topological ring. 
    \item The \emph{Tate algebra $T_m(K)$ in $m$-variables over $K$} is defined as $T_m(K) := K \otimes_{K^\circ} K^\circ\{x_1,\ldots,x_m\}.$ We equip $T_m(K)$ with the \emph{Gauss norm} which is defined as follows: $\norm{\sum_{i \geq 0}a_i x^i}_\mathrm{Gauss} := \max_i \{|a_i|\}$. The Guass norm is a multiplicative norm on $T_m(K)$ that makes $T_m(K)$ a Banach $K$-algebra.
\end{enumerate*}
\end{definition}

\begin{definition}[Rings of separated power series]
We fix a complete, quasi-Noetherian subring $E \subseteq K^\circ.$ Denote by $\mathfrak{B}$ the following family of complete, quasi-Noetherian subrings of $K^\circ$:\begin{align*}\mathfrak{B} := \{E[a_0,a_1,\ldots]^\wedge_{\{x \in E[a_0,\ldots]: |x| =1\}} : &\text{ where }   \\ &\{a_i\}_{i \geq 0}\subseteq K^\circ \text{ satisfies } \lim |a_i| = 0 \}.
\end{align*}
Define:
\begin{align*}
    S_{m,n}(E,K)^\circ &:= \varinjlim_{B \in \mathfrak{B}} B\{x_1,\ldots,x_m\}\llbracket\rho_1,\ldots,\rho_n\rrbracket,\\
    S_{m,n}(E,K) &:=  S_{m,n}(E,K)^\circ \otimes_{K^\circ} K. 
\end{align*} For an $f \in S_{m,n}(E,K)$ we define its Gauss norm in the usual way; writing \[f = \sum_{\mu \in \N^{m},\nu \in \N^n} b_{\mu,\nu} x_1^{\mu_1}\cdots x_m^{\mu_n} \rho_1^{\nu_1}\cdots\rho_n^{\nu_n}\] we set $\norm{f}_\mathrm{Gauss}:= \sup_{\mu, \nu} |b_{\mu,\nu}| = \max_{\mu, \nu} |b_{\mu,\nu}|.$
\end{definition}
\begin{remark}
\begin{enumerate*}
    \item We call $S_{m,n}(E,K)$ the ring of separated power series over $K$. When $K = \C_p$ for instance, we may choose $E$ to be the completion of the ring of integers of the maximal unramified extension of $\Q_p$ in $\C_p$. We shall often suppress the reference to $E$ and $K$ in the notation for convenience and often refer to $S_{m,n}(E,K)$ as simply $S_{m,n}$. 
    \item Note that $S_{m,0} = T_m(K)$ and that $S_{m,n} \supseteq T_{m+n}(K)$.
\end{enumerate*}
\end{remark}

\begin{theorem}[Lipshitz--Robinson \cite{lipshitz2000rings}]
The rings $S_{m,n}$ have the following properties:
\begin{enumerate}
    \item $S_{m,n}$ is Noetherian, a UFD, and a Jacobson ring of Krull dimension $m+n$.
    \item For every maximal ideal $\mathfrak{m}$ of $S_{m,n}$ the quotient ring $S_{m,n}/\mathfrak{m}$ is an algebraic extension of $K$. Furthermore, there is a bijection \begin{align*}
        \{\mathfrak{n}\in \mathrm{Max}(K[\underline{x},\underline{\rho}]) : |x_i(\mathfrak{n})|\leq 1, |\rho_j(\mathfrak{n})| < 1\}& \longleftrightarrow \mathrm{Max}(S_{m,n})\\
        &\mathfrak{n} \longmapsto \mathfrak{n}\cdot S_{m,n}. 
    \end{align*}
\end{enumerate}
\end{theorem}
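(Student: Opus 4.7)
The plan is to adapt the classical approach used to prove the analogous results for Tate algebras. The essential tool is a Weierstrass preparation and division theorem for $S_{m,n}$: any $f \in S_{m,n}$ that is suitably preregular in one of the variables can be written as a product of a unit in $S_{m,n}$ and a distinguished polynomial in that variable, with coefficients in either $S_{m-1,n}$ or $S_{m,n-1}$, depending on whether one eliminates a power-bounded or a topologically nilpotent variable. One must also show that any nonzero $f$ can be brought into preregular form after a suitable change of coordinates, linear in the $x_i$'s over $K^\circ$ and in the $\rho_j$'s over $K^\oo$. This is the technical core of \cite{lipshitz2000rings}.

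Granted Weierstrass, the ring-theoretic assertions follow by induction on $m+n$, with base case $S_{0,0} = K$. For Noetherianity, given an ideal $\mathfrak{a} \subseteq S_{m,n}$, one picks $0 \neq f \in \mathfrak{a}$, puts $f$ in preregular form by a change of variables, and observes that $S_{m,n}/(f)$ is finite as a module over $S_{m-1,n}$ or $S_{m,n-1}$ by Weierstrass division; the inductive hypothesis then yields finite generation of $\mathfrak{a}$. The UFD property follows by the same reduction: a prime element can be assumed to be a distinguished polynomial, and unique factorization is imported from the polynomial ring over the Noetherian UFD $S_{m-1,n}$ (or $S_{m,n-1}$) via Gauss's lemma. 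The dimension statement $\dim S_{m,n} = m+n$ combines an analogue of Noether normalization (for the upper bound) with the explicit chain of primes $(0) \subsetneq (x_1) \subsetneq \dots \subsetneq (x_1,\dots,x_m,\rho_1,\dots,\rho_n)$ of length $m+n$ (for the lower bound).

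For the second assertion, for any maximal ideal $\mathfrak{m}$, Noether normalization exhibits $S_{m,n}/\mathfrak{m}$ as a finite extension of $K$, which must equal $K$ since $K$ is algebraically closed. The Jacobson property then follows by the usual argument that every prime of a finitely generated algebra over a Jacobson ring is an intersection of maximal ideals, adapted to the quasi-affinoid setting via the Weierstrass-based finiteness. For the explicit parametrization of $\mathrm{Max}(S_{m,n})$, any $K$-algebra homomorphism $\chi : S_{m,n} \to K$ must send each $x_i$ into $K^\circ$, since the $x_i$ are power-bounded in $S_{m,n}^\circ$, and each $\rho_j$ into $K^\oo$, since the $\rho_j$ are topologically nilpotent. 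Conversely, any tuple in $(K^\circ)^m \times (K^\oo)^n$ determines a well-defined continuous evaluation character $S_{m,n} \to K$. These constructions are mutually inverse, matching the bijection with the stated maximal ideals of $K[\underline{x},\underline{\rho}]$.

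The main obstacle will be establishing Weierstrass preparation and division in this setting, because $S_{m,n}$ simultaneously carries two distinct kinds of variables: the power-bounded $x_i$ and the topologically nilpotent $\rho_j$. Consequently, both the division algorithm and the proof that every nonzero $f$ can be put into preregular position require separate arguments for each variable type, and the bookkeeping across the inductive hypothesis $\mathfrak{B}$ of complete quasi-Noetherian subrings adds a further layer of technicality. The detailed execution is carried out in \cite{lipshitz2000rings}.
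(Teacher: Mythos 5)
This theorem is stated in the paper as a cited background result of Lipshitz--Robinson, with no proof given; your outline correctly reproduces the architecture of the proof in the cited reference \cite{lipshitz2000rings}, namely Weierstrass division/preparation for the two sorts of variables, induction on $m+n$ for Noetherianity, factoriality and dimension, and quasi-affinoid Noether normalization plus evaluation characters for the Nullstellensatz and the description of $\mathrm{Max}(S_{m,n})$. The only phrasing I would tighten is the Jacobson step: $S_{m,n}$ is not literally a finitely generated algebra over a Jacobson ring, so the argument should be stated as ``each quotient domain $S_{m,n}/\mathfrak{p}$ is finite over some $S_{d,e}$ by Noether normalization, $S_{d,e}$ has zero Jacobson radical, and finite extensions inherit this''; with that adjustment the outline is a faithful account of the standard proof.
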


\begin{definition}
For a generalised ring of fractions $\varphi : T_m \rightarrow A$ over $T_m$ and an element $f \in S$ we denote by $\Delta(f)$ the set of all its Hasse derivatives $D_\nu(f) := \frac{1}{\nu_1 ! \ldots \nu_m !}\frac{\partial^{|\nu|}f}{\partial x_1^{\nu_1}\cdots \partial x_m^{\nu_m}}$ for all $\nu \in \N^m$. 
\end{definition}

\subsection{Rigid Subanalytic Sets}

\begin{definition}[The language $L$ of mutiplicatively valued rings]
Denote by \[L = (+,\cdot,|\cdot|,0,1; \overline{\cdot},\overline{<}, \overline{0},\overline{1})\] the language of multiplicatively valued rings. Note that $L$ is a two-sorted language, the operations $+,-,\cdot$ and elements $0,1$ refer to corresponding operations and elements of the underlying ring and $\overline{\cdot},\overline{0},\overline{1}$ are the underlying operations and elements on the value group $\cup \{\overline{0}\}$.
\end{definition}

We set $S := \cup_{m,n \in \N} S_{m,n}(E,K)$ and $T := \cup_{m \geq 0} T_m$. Consider any subset $\mathcal{H} \subseteq S$ such that $\Delta(\mathcal{H}) \subseteq \mathcal{H}$. The two main examples of such $\mathcal{H}$ are provided by $\mathcal{H} = S$ or $\mathcal{H} = T$.

We define now the language $L_\mathcal{H}$ introduced by Lipshitz--Robinson \cite{lipshitz2000model} which are used to define subanalytic sets. $L_\mathcal{H}$ is a three-sorted language; the first sort for the closed unit disk $K^\circ$, the second sort for $K^\oo$ the open unit disk and the last sort for the totally ordered value group$\cup \{\overline{0}\}.$ The sort structure is mostly a bookkeeping device; the first sort helps us to keep track of non-strict inequalities of the form $|f| \leq |g|$ whereas the second sort helps us to keep track of strict inequalities. 

\begin{definition}[The language $L_\mathcal{H}$]
The language $L_\mathcal{H}$ is the language obtained by augmenting to the language $L$ defined above, symbols for every function in $\mathcal{H}$; i.e. for every $f \in \mathcal{H}$, if $f \in S_{m,n}$ we add a function symbol to $L_\mathcal{H}$ with arity $m$ for the first sort and $n$ for the second sort. 
Thus, \begin{align*}
    L_\mathcal{H} := (+,\cdot,|\cdot|,0,1, \{f\}_{f \in \mathcal{H}}; \overline{\cdot},\overline{<}, \overline{0},\overline{1}).
\end{align*}
\end{definition}

\begin{definition}[Globally $\mathcal{H}$-semianalytic, locally semianalytic, and $\mathcal{H}$-sub\-analytic sets]\leavevmode
\begin{enumerate*}
    \item For a complete, valued field $F$ over $K$, a subset $X \subseteq (F_{\mathrm{alg}}^\circ)^m$ is said to be \emph{globally $\mathcal{H}$-semianalytic} (resp. \emph{$\mathcal{H}$-subanalytic}) if $X$ is definable by a quantifier-free (resp. existential) $L_\mathcal{H}$-formula, i.e. if there exists a quantifier-free (resp. existential) first-order formula $\phi(x_1,\ldots,x_m)$ such that $(a_1,\ldots,a_m) \in X$ if and only if $F_{\mathrm{alg}} \models \phi(a_1,\ldots,a_m)$.
    \item In the special case that $\mathcal{H} = S$, the $\mathcal{H}$-semianalytic (resp. subanalytic) sets are referred to as the \emph{globally quasi-affinoid semianalytic} (resp. \emph{quasi-affinoid subanalytic}) sets. Similarly, in the case that $\mathcal{H} = T,$ the $\mathcal{H}$-semianalytic (resp. subanalytic) sets are referred to as \emph{affinoid semianalytic} (resp. \emph{affinoid subanalytic}) sets. 
    \item In the special case that $\mathcal{H} = S$, we denote the language $L_\mathcal{H}$ by $L_\mathrm{an}$. Furthermore, in this case we also define the language $L_\mathrm{an}^*$ as follows. $L_\mathrm{an}^*$ is the language where we augment to $L_\mathrm{an}$ function symbols for every function $f : \mathrm{Max}(T_m(K)) \rightarrow K_\mathrm{alg}$ such that there exists a finite cover of $\mathrm{Max}(T_m(K))$ by $R$-subdomains $\mathrm{Max}(T_m(K)) = \cup_{i=1}^l U_l$ and functions $f_i \in \Ok(U_i)$ such that for every $i, f \vert_{U_i}$ agrees with the function represented by $f_i$ on $U_i$. A subset $X \subseteq (F_\mathrm{alg}^\circ)^m$ is said to be \emph{locally semianalytic} if $X$ is defined by a quantifier-free $L_\mathrm{an}^*$-formula.
\end{enumerate*}    
\end{definition}

    The globally $\mathcal{H}$-semianalytic sets are in other words Boolean combinations of sets defined by inequalities among the analytic functions in $\mathcal{H}$. Similarly, the $\mathcal{H}$-subanalytic sets, being defined by \emph{existential} formulas are precisely the sets obtained by coordinate projections of $\mathcal{H}$-semianalytic sets from higher dimensions.
    
    Just as in the real subanalytic setting, one would now ask whether subanalytic sets satisfy basic closure properties. For instance,  are they closed under taking complements, closures? It turns out that they are. Lipshitz-Robinson \cite{lipshitz2000model} prove a quantifier-simplification theorem for the language $L_\mathcal{H}$ (recalled below), which would imply that any \emph{arbitrary} $L_\mathcal{H}$-definable set is also $\mathcal{H}$-subanalytic. Since complements and closures are all first-order definable in $\mathcal{L}_\mathcal{H}$, the required closure properties would then follow.
    
    Lipshitz and Robinson's proof of the quantifier simplification theorem for $L_\mathcal{H}$, is actually obtained as a consequence of a striking quantifier-\emph{elimination} theorem in a slightly expanded language $\mathcal{L}_{\mathcal{E}(\mathcal{H})}$ which we introduce now. 
    The expanded language $\mathcal{L}_{\mathcal{E}(\mathcal{H})}$, roughly speaking contains function symbols for every function that is \emph{existentially} definable from functions in $\mathcal{H}$ (the precise definitions are given below). 
    The need to expand the language $ \mathcal{L}_\mathcal{H}$ to include such functions is reflected in the fact that for an $f \in \mathcal{H}$, the Weierstrass data outputted by the Weierstrass division theorems in the context of the algebras $S_{m,n}$ are only existentially definable over $\mathcal{H}$. 

    We also note that for a generalized ring of fractions $\varphi : T_m \rightarrow A$ over $T_m$ and for an element $f \in A$, the induced analytic function $f  : \mathrm{Dom}(A)(F) \rightarrow F_\mathrm{alg}$ might not necessarily be in $\mathcal{H}$ but is nevertheless existentially definable over $\mathcal{H}$. 

\begin{definition}[Existentially definable analytic functions] \cite[Definition 2.6]{lipshitz2000model}.
Given a complete valued field extension $F$ of $K$, a subset $X \subseteq (F_\mathrm{alg}^\circ)^m$, and a function $f :X \rightarrow F_\mathrm{alg}$, we say that $f$ is \emph{existentially definable from the functions $g_1,\ldots, g_l$} if there exists a quantifier-free formula $\phi$ in the language $L$ of multiplicatively valued rings, such that \begin{align*}
    y = f(x) \iff \exists\, \underline{z},\, \phi(x,y,\underline{z}, g_1(x,y,\underline{z}), \ldots, g_l(x,y,\underline{z})).
\end{align*} 
\end{definition}

\begin{definition}[The expanded language $L_{\mathcal{E}(\mathcal{H})}$]\leavevmode
\begin{enumerate*}
    \item We set $\mathcal{E}(\mathcal{H})$ to consist of all functions $f :\mathrm{Dom}(A)(F)  \rightarrow F_\mathrm{alg}$ for a generalized ring of fractions $\varphi : T_m \rightarrow A$ over $T_m$ and $f \in A$ such that all of its partial derivatives, i.e. all the functions in $\Delta(f)$ are existentially definable from functions in $\mathcal{H}$. 
    \item The language $L_{\mathcal{E}(\mathcal{H})}$ is the three-sorted language obtained by augmenting $L_\mathcal{H}$ with function symbols for every $f \in \mathcal{E}(\mathcal{H}).$
\end{enumerate*}
\end{definition}

\begin{theorem}[The uniform quantifier elimination theorem of Lipshitz and Robinson \cite{lipshitz2000model}]\label{uniform-quant-elimination}
Fix a subset $\mathcal{H} \subseteq S$ such that $\Delta(\mathcal{H}) = \mathcal{H}$. Let $\varphi(\underline{x})$ be an $L_{\mathcal{E}(\mathcal{H})}$-formula. Then there exists a quantifier-free $L_{\mathcal{E}(\mathcal{H})}$-formula $\psi(\underline{x})$ such that for every complete valued field extension $F$ of $K$ we have that \[F_\mathrm{alg} \models \left(\forall \underline{x}, \varphi(\underline{x})\iff \psi(\underline{x})\right). \]
\end{theorem}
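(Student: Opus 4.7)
The plan is to follow a model-theoretic induction that eliminates one existential quantifier at a time, using the Weierstrass preparation and division theorems available in the separated power series algebras $S_{m,n}$ to convert analytic conditions into polynomial ones, and then invoking the classical quantifier elimination for algebraically closed non-trivially valued fields (ACVF) on the resulting polynomial data. Concretely, after the standard reductions I would reduce to eliminating a single existential quantifier from a formula of the shape $\exists y\, \phi(\underline{x},y)$ where $\phi$ is quantifier-free in $L_{\mathcal{E}(\mathcal{H})}$. Writing $\phi$ in disjunctive normal form and distributing $\exists y$ across the disjunction, one may further assume $\phi$ is a finite conjunction of atomic and negated atomic formulas, all of which are valuation (in)equalities between $\mathcal{E}(\mathcal{H})$-terms in $(\underline{x},y)$.

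The key step is to bring the finitely many analytic terms appearing in $\phi$ into a form to which Weierstrass preparation in $S_{m,n}$ applies with respect to the variable $y$. Depending on whether $y$ is a first-sort (closed-disk) or second-sort (open-disk) variable, I would perform a separated Weierstrass change of coordinates so that each $f(\underline{x},y) \in \mathcal{E}(\mathcal{H})$ becomes regular in $y$ of some order $d_f$; Weierstrass preparation then factors $f = u_f \cdot P_f$ where $u_f$ is a unit of known Gauss norm and $P_f$ is a distinguished polynomial in $y$ whose coefficients lie in a generalized ring of fractions over an appropriate $T_m$. Because $\Delta(\mathcal{H}) = \mathcal{H}$, one can check that these coefficients, along with all their Hasse derivatives, are existentially definable from $\mathcal{H}$, hence lie in $\mathcal{E}(\mathcal{H})$. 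The valuation inequality $|f| \Box |g|$ is thereby replaced by an inequality $|u_f|\cdot |P_f(y)| \Box |u_g|\cdot |P_g(y)|$, in which the unit factors contribute only constraints on $\underline{x}$.

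At this point the problem has been reduced to eliminating $\exists y$ from a conjunction of valuation (in)equalities between \emph{polynomials} in $y$ whose coefficients are $\mathcal{E}(\mathcal{H})$-terms in $\underline{x}$, with $y$ ranging over the appropriate disk. Here one invokes Robinson's quantifier elimination for algebraically closed non-archimedean valued fields: since that elimination is uniform in the parameters, it produces a Boolean combination of valuation inequalities among the polynomial coefficients, i.e.\ a quantifier-free $L_{\mathcal{E}(\mathcal{H})}$-formula in $\underline{x}$. Uniformity across all complete valued extensions $F/K$ is inherited from the uniformity of both Weierstrass preparation (the distinguished polynomial and unit are defined by convergent power series over $K$ whose evaluation is meaningful in every such $F$) and ACVF quantifier elimination.

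The principal obstacle, and the reason the result is nontrivial, lies in the second step: carrying out the Weierstrass reduction inside the separated category while maintaining membership in $\mathcal{E}(\mathcal{H})$ throughout. The change of variables must respect the two-sort structure (a first-sort variable cannot be substituted by something that escapes $K^\circ$, and a second-sort variable must stay inside $K^{\circ\circ}$), which forces one to use the separated analogues of the usual Tate-algebra Weierstrass substitutions. Moreover, the Weierstrass data themselves are only obtained via generalized rings of fractions, so one must verify that the coefficients \emph{together with all their Hasse derivatives} remain existentially definable from $\mathcal{H}$; this is precisely the reason the language must be enlarged from $L_\mathcal{H}$ to $L_{\mathcal{E}(\mathcal{H})}$ in order for the induction to close, and it is the most delicate part of the argument.
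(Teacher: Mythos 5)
This theorem is not proved in the paper at all: it is quoted verbatim from Lipshitz--Robinson \cite{lipshitz2000model} as an external input, so there is no internal argument to compare yours against. Judged on its own terms, your outline does capture the correct high-level strategy of the original proof (eliminate one quantifier at a time, use Weierstrass preparation in the separated power series rings to replace analytic terms by distinguished polynomials in the quantified variable, then appeal to quantifier elimination for algebraically closed valued fields on the polynomial data, with $\mathcal{E}(\mathcal{H})$ introduced precisely so that the Weierstrass coefficients remain terms of the language). But as a proof it has genuine gaps at exactly the points where the real work lies.

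The most serious gap is the assertion that a ``separated Weierstrass change of coordinates'' makes each $f(\underline{x},y)\in\mathcal{E}(\mathcal{H})$ regular in $y$. In the Tate algebra $T_n$ over a field this is a standard fact for finitely many nonzero elements, but in $S_{m,n}$ it fails: the admissible changes of variables must preserve the two-sort structure, and an element of $S_{m,n}$ need not become regular in a chosen variable under any such substitution. Lipshitz and Robinson get around this by partitioning the domain into finitely many $R$-subdomains (via generalized rings of fractions) on each of which a preregularity condition can be arranged, and the elimination argument is an induction over this decomposition; your sketch acknowledges this is ``the most delicate part'' but does not supply it, which means the induction you describe does not actually close. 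Two further points are asserted rather than argued: (i) the uniformity over \emph{all} complete valued extensions $F$ of $K$ --- this is why the original proof is syntactic/algorithmic rather than a model-theoretic back-and-forth, and it does not simply ``inherit'' from ACVF quantifier elimination without checking that every step of the reduction is performed by formulas with coefficients in $K$; and (ii) the third sort (the value group) is never addressed, even though $L_{\mathcal{E}(\mathcal{H})}$-formulas may quantify over it and its elimination requires the theory of divisible ordered abelian groups. Since the paper treats this theorem as a citation, the appropriate course is to do the same rather than to attempt a self-contained proof; if you do want to reconstruct the argument, the $R$-subdomain decomposition step is the one you must actually write out.
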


\begin{corollary}[Quantifier simplification for $L_\mathcal{H}$]
For every $L_\mathcal{H}$-formula $\varphi(\underline{x})$, there exists an \emph{existential} $L_\mathcal{H}$-formula $\psi(\underline{x})$ such that for every complete valued field $F$ extending $K$ we have that  \[F_\mathrm{alg} \models \forall \underline{x},\, \varphi(\underline{x}) \iff \psi(\underline{x}).\]
In other words, every $L_\mathcal{H}$-definable subset is in fact $\mathcal{H}$-subanalytic. In particular, the closures and complements of $\mathcal{H}$-subanalytic sets are again $\mathcal{H}$-subanalytic.   
\end{corollary}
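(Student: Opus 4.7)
The plan is to derive this corollary almost mechanically from \autoref{uniform-quant-elimination} by unpacking the definition of $\mathcal{E}(\mathcal{H})$. Let $\varphi(\underline{x})$ be an arbitrary $L_\mathcal{H}$-formula. Since $L_\mathcal{H} \subseteq L_{\mathcal{E}(\mathcal{H})}$, we may regard $\varphi$ as an $L_{\mathcal{E}(\mathcal{H})}$-formula, and the uniform quantifier elimination theorem produces a quantifier-free $L_{\mathcal{E}(\mathcal{H})}$-formula $\psi_0(\underline{x})$ equivalent to $\varphi$ in every complete valued field extension of $K$. Thus the task reduces to converting a quantifier-free $L_{\mathcal{E}(\mathcal{H})}$-formula into an \emph{existential} $L_\mathcal{H}$-formula.

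First I would normalize $\psi_0$ so that every occurrence of a function symbol $f \in \mathcal{E}(\mathcal{H})$ appears only in atomic subformulas of the shape $y = f(\underline{t})$ for some fresh variable $y$ and some tuple of terms $\underline{t}$. This is done by repeatedly introducing fresh variables and existentially binding them: a term $f(\underline{t})$ appearing inside a larger expression is replaced by a new variable $y$, with the conjunct $y = f(\underline{t})$ added and $y$ bound by $\exists y$ at the front. This is a standard flattening step and it produces an equivalent existential formula in which each occurrence of a function of $\mathcal{E}(\mathcal{H})$ is isolated. By the definition of $\mathcal{E}(\mathcal{H})$, for each such $f$ there is a quantifier-free $L$-formula $\phi_f$ and auxiliary variables $\underline{z}$ such that $y = f(\underline{t})$ is equivalent to $\exists\, \underline{z}\, \phi_f(\underline{t},y,\underline{z}, g_1(\underline{t},y,\underline{z}), \ldots, g_l(\underline{t},y,\underline{z}))$ where each $g_i \in \mathcal{H}$. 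Substituting these definitions, pulling all existential quantifiers to the front, and relabelling yields an existential $L_\mathcal{H}$-formula $\psi(\underline{x})$ equivalent to $\varphi(\underline{x})$ in every complete valued field extension of $K$. This establishes the first assertion, and in particular shows that every $L_\mathcal{H}$-definable subset of $(F_\mathrm{alg}^\circ)^m$ is $\mathcal{H}$-subanalytic.

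For the closure properties: if $X \subseteq (F_\mathrm{alg}^\circ)^m$ is $\mathcal{H}$-subanalytic, then it is $L_\mathcal{H}$-definable, and so is its complement (via negation) and its closure (via the standard first-order formula involving a universal quantifier over the value group asserting that every open polydisc around the point meets $X$). Both are therefore $L_\mathcal{H}$-definable, and the first part of the corollary upgrades them to existential $L_\mathcal{H}$-formulas, hence to $\mathcal{H}$-subanalytic sets.

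The main obstacle, and the only substantive step, is the flattening and substitution procedure in the middle paragraph. I would be careful to verify that the replacement is compatible with the three-sorted structure (the variable $y$ introduced for a function value must be placed in the appropriate sort, matching the codomain of $f$), but this is a purely syntactic bookkeeping matter. Everything else follows formally from \autoref{uniform-quant-elimination} and the definition of $\mathcal{E}(\mathcal{H})$.
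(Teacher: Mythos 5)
Your proposal is correct and follows the same route the paper intends: the corollary is stated there as a direct consequence of the Lipshitz--Robinson uniform quantifier elimination theorem in $L_{\mathcal{E}(\mathcal{H})}$, with the passage back to an existential $L_\mathcal{H}$-formula resting on the fact that every $f \in \mathcal{E}(\mathcal{H})$ has an existentially $L_\mathcal{H}$-definable graph; your flattening-and-substitution step (with the quantifiers prenexed after replacing each positive conjunct $y = f(\underline{t})$ by its existential definition) is exactly the bookkeeping the paper leaves implicit. The closure-property argument via first-order definability of complements and closures in the three-sorted language also matches the paper's remark verbatim.
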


\subsection{A rigid subanalytic Riemann extension theorem}\label{sec:riemann-extension-subanalytic}

In this section we prove a version of the Riemann extension theorem in the setting of rigid subanalytic sets. 

Throughout this section and in everything that follows, by subanalytic (without further qualification) we shall simply mean quasi-affinoid subanalytic, i.e. $\mathcal{H}$-subanalytic with $\mathcal{H} = S = \cup_{m,n} S_{m,n}(E,K).$ We also assume in this section that $K$ is algebraically closed. We shall denote by $\B^d$ the $d$-dimensional rigid analytic closed unit disk over $K$, that is $\B^d = \Sp(T_d(K))$.

It is convenient to extend the notion of subanalytic sets to subsets of $K^n$. We make the following definition:

\begin{definition}\label{def:subanalytic*-in-C_p}
A subset $\mathcal{S}\subseteq K^n$ is said to be a subanalytic subset of $K^n$ if the following equivalent conditions are satisfied: 
\begin{enumerate**}
    \item \label{subanalytic*-in-C_p-proj} $\pi_n^{-1}(\mathcal{S})\subseteq (K^\circ)^{n+1}$ is subanalytic, where \begin{gather*}
        \pi_n : (K^\circ)^{n+1}\setminus\{0\} \rightarrow \mathbb{P}^n(K^\circ)=\mathbb{P}^n(K)
    \end{gather*} is the map sending $(z_0,z_1,\ldots,z_n) \mapsto [z_0:z_1:\ldots:z_n].$ 
    
    We view $K^n \subseteq \mathbb{P}^n(K)$ via the map $(z_0,z_1,\ldots,z_{n-1})\mapsto [z_0:z_1:\ldots:z_{n-1}:1].$
    
    \item \label{subanalytic*-in-C_p-balls} For every map $\epsilon :  \{1,2,\ldots,n\} \rightarrow \{\pm 1\}$ the set \begin{gather*}
        \mathcal{T}_\epsilon := \{(\alpha_1,\ldots,\alpha_n) \in (K^\circ)^n : \text{ if } \epsilon(r)=-1 , \alpha_r \neq 0,\\ \text{ and } (\alpha_1^{\epsilon(1)},\ldots,\alpha_i^{\epsilon(i)},\ldots,\alpha_n^{\epsilon(n)} )\in \mathcal{S}\}
    \end{gather*} is a subanalytic subset of $(K^\circ)^n.$
\end{enumerate**}
\end{definition}

It follows that the collection of subanalytic subsets of $K^n$ forms a Boolean algebra of subsets, closed under projections, and moreover forms a structure on $K$ in the sense of \cite[Ch 1, (2.1)]{van2000tame}.

\begin{definition}\label{subanalytic*-in-rigid-space}
Let $X$ be a separated rigid analytic variety over $K$ and let $S \subseteq X$ be a subset. Then we say that $S$ is \emph{locally subanalytic} in $X$ if there exists an admissible cover by admissible affinoid opens $X = \cup_i X_i$ and closed immersions $\beta_i : X_i \hookrightarrow \mathbb{B}^{d_i}$ such that for all $i$, $\beta_i(S \cap X_i)$ is subanalytic in $(K^\circ)^{d_i}.$ 
\end{definition}

It is easy to see that if it
is true for one admissible affinoid cover and some choice of embeddings $\beta_i$, then it's true for any other such cover and embeddings.

\begin{definition}\label{subanalytic*-in-varieties}
Let $V/K$ be a finite-type reduced scheme over $K.$ We say that a subset $S \subseteq V(K)$ is subanalytic if there exists a finite affine open cover $V = \cup_i U_i = \cup_i \Spec(A_i)$ and closed embeddings $U_i(K) \xhookrightarrow{\beta_i} K^{n_i}$ (arising from a presentation of $A_i$ as a quotient of $K[t_1,\ldots,t_{n_i}]$) such that for all $i$, $\beta_i(S\cap U_i(K))$ is subanalytic. 
\end{definition}

\begin{remark}
We note that if $S \subseteq V(K),$ is subanalytic, then for \emph{every} finite affine open cover $U_i$ of $V$ and for any choice of presentations $\beta_i : K[t_1,\ldots,t_{n_i}]\twoheadrightarrow \Ok({U_i}),$ we have that $\beta_i(S \cap U_i(K)) \subseteq K^{n_i}$ is subanalytic.
\end{remark}

\begin{remark}\label{rem:incompatibility-with-analytification*}
Suppose $V$ is a separated finite type scheme over $K$ and $V^\mathrm{an}$ is the associated rigid analytic variety, with analytification map $a_V : V^\mathrm{an} \rightarrow V,$ then we note that the map $a_V$ need not necessarily take a locally subanalytic set on $V^\mathrm{an}$ to a subanalytic set of $V(K)$ in the sense of \autoref{subanalytic*-in-varieties}. Indeed, if we consider the affine line $\mathbb{A}^1_{\C_p},$ and the subset $S:= \cup_{n \geq 0} \{z\in \C_p : |p^{-2n}|\leq |z| \leq |p^{-(2n+1)}|\}.$ Then $S$ is not a rigid subanalytic subset of  the algebraic affine line $\mathbb{A}^1(\C_p)$ nevertheless it is a locally subanalytic subset of the analytification $\mathbb{A}^{1,\mathrm{an}}_{\C_p}$.

But if $V$ is proper then locally subanalytic sets of $V^\mathrm{an}$ are indeed subanalytic in $V(K)$.
\end{remark}

\begin{lemma}\label{subanalytic-in-proper-rigid}
Suppose $V$ is a variety over $K.$ Let $V^\mathrm{an}$ denote the associated rigid analytic space over $K,$ with analytification map $a_V : V^\mathrm{an} \rightarrow V.$ Then, if $S \subseteq V(K)$ is subanalytic as in \autoref{subanalytic*-in-varieties} then $a_V^{-1}(S)\subseteq V^\mathrm{an}$ is locally subanalytic as in \autoref{subanalytic*-in-rigid-space}.

Moreover, if $V$ is \emph{proper} over $K$ then the converse holds, i.e. $S \subseteq V(K)$ is subanalytic $\iff a_V^{-1}(S) \subseteq V^\mathrm{an}$ is locally subanalytic.
\end{lemma}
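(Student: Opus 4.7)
The proof naturally divides into the two implications, with properness needed only for the converse.

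For the forward direction, my plan is to fix the given affine open cover $V = \bigcup_i U_i = \bigcup_i \Spec(A_i)$ and presentations $\beta_i : K[t_1,\ldots,t_{n_i}] \twoheadrightarrow A_i$ witnessing subanalyticity of $S$. To build a witnessing admissible affinoid cover of $V^{\mathrm{an}}$, I would take the admissible open cover $V^{\mathrm{an}} = \bigcup_i U_i^{\mathrm{an}}$ and further cover each $U_i^{\mathrm{an}} \subseteq \mathbb{A}^{n_i, \mathrm{an}}$ by the affinoids $X_{i,m} = U_i^{\mathrm{an}} \cap \{|t_j| \leq |c|^{-m} \text{ for all } j\}$, $m \geq 0$, where $c \in K^{\circ\circ} \setminus \{0\}$ is fixed. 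After the rescaling $s = c^m t$, each $X_{i,m}$ embeds as a closed analytic subspace of the unit polydisc $\B^{n_i}$, and the image of $a_V^{-1}(S) \cap X_{i,m}$ is $\{s \in (K^\circ)^{n_i} : (c^{-m} s_1, \ldots, c^{-m} s_{n_i}) \in \beta_i(S \cap U_i(K))\}$. In projective coordinates $c^{-m} s$ corresponds to $[s_1 : \cdots : s_{n_i} : c^m]$, so this rescaled set is obtained from the subanalytic set $\pi_{n_i}^{-1}(\beta_i(S \cap U_i(K))) \subseteq (K^\circ)^{n_i+1}$ (subanalytic by \autoref{def:subanalytic*-in-C_p}(i)) by specializing the last coordinate to the constant $c^m$, a first-order operation which preserves subanalyticity.

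For the converse, with $V$ proper, fix an affine chart $U = \Spec(A) \hookrightarrow V$ with closed embedding $\beta : U \hookrightarrow \mathbb{A}^n$. After replacing $S$ by $S \cap U(K)$ (still locally subanalytic by restriction to the admissible open $U^{\mathrm{an}}$), it suffices to show $\beta(S) \subseteq K^n$ is subanalytic. I would compactify by forming the scheme-theoretic closure $\tilde V$ of the graph of $U \hookrightarrow V \times \mathbb{P}^n$ inside the proper scheme $V \times \mathbb{P}^n$: then $\tilde V$ is proper, with proper projections $\pi_V : \tilde V \to V$ (an isomorphism over $U$) and $\pi_{\mathbb{P}^n} : \tilde V \to \mathbb{P}^n$ extending $\beta$. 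The pullback $a_{\tilde V}^{-1}(\pi_V^{-1}(S)) = (\pi_V^{\mathrm{an}})^{-1}(a_V^{-1}(S))$ is locally subanalytic on the quasi-compact $\tilde V^{\mathrm{an}}$, since pullback by an analytic morphism preserves subanalyticity on affinoid pieces. Using the standard finite affinoid cover $\mathbb{P}^{n, \mathrm{an}} = \bigcup_{k=0}^n D_k$, with $D_k = \{[z_0:\cdots:z_n] : |z_k| \geq |z_i| \text{ for all } i\}$, I would refine to a finite admissible affinoid cover $\tilde V^{\mathrm{an}} = \bigcup_l Y_l$ with each $Y_l$ mapping analytically into some $D_k$. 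On each $Y_l$, the image $\pi_{\mathbb{P}^n}^{\mathrm{an}}(a_{\tilde V}^{-1}(\pi_V^{-1}(S)) \cap Y_l) \subseteq D_k(K) = (K^\circ)^n$ is subanalytic as the image of a subanalytic set under an analytic morphism of affinoids. Unioning over $l$ shows $\beta(S) \cap D_k(K)$ is subanalytic in $(K^\circ)^n$ for each $k$. Finally I would assemble: $\pi_n^{-1}(\beta(S)) \subseteq (K^\circ)^{n+1}$ decomposes as the union over $k$ of the sets $\{z \in (K^\circ)^{n+1} : |z_k| \geq |z_i| \text{ for all } i,\ (z_0/z_k, \ldots, \widehat{z_k/z_k}, \ldots, z_n/z_k) \in \beta(S) \cap D_k(K)\}$, each subanalytic by existential $L_\mathrm{an}$-definability, yielding subanalyticity of $\beta(S) = \beta(S \cap U(K))$ in $K^n$ via \autoref{def:subanalytic*-in-C_p}(i).

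The main obstacle is the converse direction, and specifically the step that images and preimages of subanalytic sets under analytic morphisms of affinoids remain subanalytic. This relies essentially on the Lipshitz--Robinson quantifier simplification theorem (\autoref{uniform-quant-elimination}): subanalytic sets are precisely those defined by existential $L_\mathrm{an}$-formulas, so both pullback (composition with analytic functions in $S$) and pushforward (existential projection) stay in the same class. The graph compactification $\tilde V$ is forced by the fact that $U^{\mathrm{an}}$ is not quasi-compact in general, whereas properness of $V$ makes $\tilde V$ proper (and hence $\tilde V^{\mathrm{an}}$ quasi-compact), which gives the finite admissible affinoid cover capturing all of $S$.
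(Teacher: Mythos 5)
Your proof is correct and rests on the same key point as the paper's one-line proof: properness gives quasi-compactness of the analytification, hence a finite admissible affinoid cover on which subanalyticity can be checked and transported between the algebraic charts. The paper leaves unstated all of the chart-comparison and image/preimage stability details, which you supply via the graph closure in $V\times\mathbb{P}^n$ and the Lipshitz--Robinson quantifier simplification theorem.
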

\begin{proof}
This follows from the fact that proper rigid spaces are quasicompact, and in particular, when $V$ is proper, $V^\mathrm{an}$ has an admissible covering by  \emph{finitely many} affinoids.
\end{proof}



We now turn to the proof of the following version of the Riemann extension theorem. 

\begin{theorem}\label{riemann-extension-subanalytic}
Suppose $X$ is a separated and reduced rigid analytic space over the algebraically closed field $K.$ Let $Y \subseteq X$ be a closed analytic subvariety of $X$ that is everywhere of positive codimension. Then any analytic function $f \in \mathcal{O}_X(X\setminus Y)$ whose graph is a locally subanalytic subset of $X(K) \times K$ extends to a meromorphic function on all of $X,$ i.e. $f \in \mathcal{M}(X).$
\end{theorem}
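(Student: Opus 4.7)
The plan is to reduce the statement to a local Łojasiewicz-type growth bound on $|f|$ near $Y$, whereupon the classical rigid Riemann extension theorem of Bartenwerfer produces an analytic extension of $f \cdot h^N$ for a local equation $h$ of $Y$ and a suitable $N$. Since meromorphy is a local question on $X$, I would work on an admissible affinoid open $U = \Sp(A) \subseteq X$ meeting $Y$; by passing to the normalization (which is finite over $U$ and an isomorphism over $U \setminus Y$) I may further assume $U$ is normal, and it suffices to extend $f$ meromorphically across the codimension-one components of $Y$, since meromorphic functions on a normal rigid space extend across analytic subsets of codimension $\geq 2$ by a rigid Hartogs-type argument. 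Localizing further around a generic point of a codimension-one component of $Y$, I reduce to the case where $Y = V(h)$ for some nonzero-divisor $h \in A$.

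The crucial step is the following Łojasiewicz-type inequality: there exist $C > 0$ and $N \geq 0$ such that $|f(x)| \leq C \cdot |h(x)|^{-N}$ for all $x \in U \setminus Y$ in a neighborhood of $Y$. I would prove this by analyzing the closure $\Gamma \subseteq U \times \mathbb{P}^1_K$ of the graph $\Gamma_f \subseteq (U \setminus Y) \times K$. By the uniform quantifier simplification of Lipshitz--Robinson (\autoref{uniform-quant-elimination}), $\Gamma$ is locally subanalytic, and its dimension equals $\dim(\Gamma_f) = \dim U$. The Theorem of the Boundary for rigid subanalytic sets (Lipshitz--Robinson) then yields $\dim(\Gamma \setminus \Gamma_f) < \dim U$; consequently the generic fiber of the projection $\Gamma \cap (Y \times \mathbb{P}^1_K) \to Y$ is zero-dimensional and hence finite. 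Using this finiteness of limit values of $f$ along approaches to $Y$, together with the rigid subanalytic Łojasiewicz inequality, yields the polynomial bound: on the subanalytic set $S = \{x \in U \setminus Y : |f(x)| \geq 1\}$, the function $1/f$ extends continuously by $0$ to $\overline{S} \cap Y$, its zero locus there coincides with that of $h$, and Łojasiewicz gives $|h|^N \leq C/|f|$ on $\overline{S}$.

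Given this bound, $fh^N$ is an analytic and bounded function on $U \setminus Y$, so by Bartenwerfer's rigid Riemann extension theorem it extends uniquely to a global analytic function $g \in \mathcal{O}_U(U)$. Then $f = g/h^N$ exhibits $f$ as meromorphic on $U$. Gluing these local meromorphic extensions, and then extending across the codimension-$\geq 2$ part of $Y$ by a rigid Hartogs-type result for meromorphic functions on normal rigid spaces, completes the proof.

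The main obstacle is the Łojasiewicz-type inequality in the second paragraph: this is the one place where the subanalyticity hypothesis on the graph of $f$ must be converted into quantitative control on the growth of $f$ near $Y$, and it genuinely relies on the dimension-theoretic input coming from rigid subanalytic geometry. The remaining steps---the local reductions, Bartenwerfer's extension, and the Hartogs-type extension across the higher-codimension part of $Y$---are standard in rigid analytic geometry.
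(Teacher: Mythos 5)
Your route is genuinely different from the paper's. The paper makes the same preliminary reductions (localize, Noether-normalize via \autoref{finite-descent-riemann-extension-lemma}, discard $\mathrm{Sing}(Y)$ using L\"utkebohmert's Levi extension theorem), but it never proves or uses a \L{}ojasiewicz bound: instead it invokes Kiehl's tubular neighbourhood theorem (\autoref{kiehl's-tubular-neighbourhood}) to reduce to $X=Z\times\B^1$, $Y=Z\times\{0\}$, and then argues on the Laurent expansion $f=\sum_i a_it^i+\sum_{j>0}b_jt^{-j}$: for each fixed $z\in Z$ the principal part has a discrete, \emph{globally} subanalytic zero set in $K^\circ\setminus\{0\}$, hence a finite one, which via the zero-counting estimate \autoref{zeroes-of-tate-functions-new} forces $b_j(z)=0$ for $j\gg0$; the Baire category theorem then makes this uniform in $z$, so $f$ has a pole of finite order along $Y$ (\autoref{riemann-extension-product-lemma}). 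Your outer reductions are essentially sound (one caveat: the normalization is an isomorphism over the normal locus, not over $X\setminus Y$; one should descend through it via $\mathcal{M}(\widetilde U)=\mathcal{M}(U)$, as in \autoref{finite-descent-riemann-extension-lemma}), and the final step via the Bartenwerfer--L\"utkebohmert first Riemann extension theorem is correct \emph{once} the bound $|f|\leq C|h|^{-N}$ is in hand.

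The gap is exactly where you locate it, and it is genuine rather than a routine verification. First, the claim that $1/f$ extends continuously by $0$ to $\mathrm{cl}(S)\cap Y$, where $S=\{|f|\geq1\}$, does not follow from your dimension count: the finiteness of the generic fibre of $\mathrm{cl}(\Gamma_f)\cap(Y\times\PP^1)\to Y$ gives finitely many limit values, not a single one, and nothing forces that value to be $\infty$ --- indeed $S$ typically contains, arbitrarily close to $Y$, points lying on the boundary of the region $\{|f|<1\}$, where $|f|=1$ and hence $|1/f|=1$. Second, and more seriously, even granting a continuous extension of $1/f$ vanishing on $\mathrm{cl}(S)\cap Y$, the \L{}ojasiewicz inequality requires that extension to be \emph{subanalytic}, and this is precisely the content one must extract from the hypothesis. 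The function $f(z)=\sum_{n\geq1}p^{n^{2}}z^{-n}$ on $\B^1\setminus\{0\}$ over $\C_p$ is analytic, and $\sup_{|z|=r}|f|=\max_n p^{-n^{2}}r^{-n}$ exceeds every fixed power of $r^{-1}$ as $r\to0$, so no bound $|f|\leq C|z|^{-N}$ holds and $f$ is not meromorphic at $0$; what rules it out is only that its graph fails to be locally subanalytic (its zero set is an infinite discrete subset of $K^\circ$, and $\{|f|\leq1\}$ is an infinite union of shrinking annuli, not a Boolean combination of disks). Any correct proof must apply subanalyticity at exactly this pressure point, whereas your sketch delegates it to an unproved continuity/subanalyticity statement for $1/f$ on $\mathrm{cl}(S)$ together with a rigid \L{}ojasiewicz inequality whose hypotheses have not been verified. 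Closing this step --- for instance by showing that the sets $\{|f|\geq|h|^{-N}\}$ are globally subanalytic near $Y$ and then running a one-variable zero-counting argument fibrewise, which is in effect what the paper's \autoref{riemann-extension-product-lemma} does --- is the actual substance of the theorem.
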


\subsection*{Outline of the proof}
The proof is inspired by L\"utkebohmert's proof of the usual non-archim\-edean Riemann extension theorem \cite{lutkebohmert1974satz}.
We make a series of reductions in the course of the proof. We summarize the main reduction steps below.

\begin{description}
    \item[Step 1:] The question of extending $f$ meromorphically along $X$ is local for the $G$-topology of $X$ and thus we may assume that $X = \Sp(A)$ is a reduced affinoid. Further, working over irreducible components of $X$, we also assume that $X = \Sp(A)$ is irreducible and thus that $A$ is an integral domain.
    \item[Step 2:] Choose a Noether normalization $\pi : X \rightarrow \B^d$. We show in \autoref{finite-descent-riemann-extension-lemma} that if we prove our theorem for $\B^d$ and the analytic subset $\pi(Y) \subseteq \B^d$, we can conclude the theorem for $X$. Thus, we may assume $X = \B^d$ is the $d$-dimensional rigid unit disk over $K$.
    \item[Step 3:] Since $\mathrm{Sing}(Y)$ is of codimension at least $2$ in $X$, by the non-archimedean Levi extension theorem \cite[Theorem 4.1]{lutkebohmert1974satz}, it suffices to extend $f$ meromorphically to an $f^* \in \mathcal{M}(X\setminus\mathrm{Sing}(Y))$. Replace $X, Y$ by $X \setminus \mathrm{Sing}(Y), Y \setminus \mathrm{Sing}(Y)$ respectively. Once more using Step 1, we reduce to the case where $Y$ is regular/smooth and $X$ is an affinoid subdomain of $\B^d$. 
    \item[Step 4:] Since $X$ and $Y$ are smooth over the algebraically closed field $K$, we are now in a position to use a result of Kiehl (recalled below, \autoref{kiehl's-tubular-neighbourhood}) which tells us that \emph{locally} $Y \subseteq X$ looks like $Z \times \{0\} \subseteq Z \times \B^n$ for a smooth affinoid space $Z$. We may even assume that $n=1$ since if $Y$ is codimension at least $2$, the result we seek is a special case of the non-archimedean Levi extension theorem. In all we are down to the case where $X = Z \times \B^1$ and $Y = Z \times \{0\}$ for a smooth, reduced affinoid space $Z$ over $K$.
    \item[Step 5:] This final case is proved separately in \autoref{riemann-extension-product-lemma}. 
\end{description}

We first recall Kiehl's tubular neighbourhod result. We need the following definition.

\begin{definition}(\cite[Definition 1.11]{kiehl1967derham}.)
We say that an affinoid algebra $A$ over the non-trivially valued non-archimedean field $k$ is \emph{absolutely regular at a maximal ideal} $x$ of $A$ if for every complete valued field $K$ extending $k$ and for every maximal ideal $y$ of $A \widehat{\otimes}_k K$ above $x$, the localization $(A \widehat{\otimes}_k K)_y$ is a regular local ring. If the affinoid algebra $A$ over $k$ is absolutely regular at every one of its maximal ideals we say that $A$ is absolutely regular.
\end{definition}

\begin{remark}
For a maximal ideal $x$ of an affinoid algebra $A$ over an \emph{algebraically closed} (or more generally perfect) non-archimedean field $k$, $A$ is absolutely regular at $x$ if and only if the localization $A_x$ is a regular local ring. 
\end{remark}

\begin{theorem}\label{kiehl's-tubular-neighbourhood}\emph{(Kiehl's tubular neighbourhood theorem, \cite[Theorem 1.18]{kiehl1967derham}).} Suppose $A$ is an affinoid algebra over a non-trivially valued non-archimedean field $k$ and let $\mathfrak{a}$ be an ideal of $A$ generated by $f_1, \ldots, f_l \in A$. Suppose that the quotient affinoid algebra $A/\mathfrak{a}$ is absolutely regular and that $A$ is absolutely regular at every point of $V(\mathfrak{a})$. Then there exists an $\epsilon \in k^\times$ such that the `$\epsilon$-tube' around $V(\mathfrak{a})$,  \[\Sp(B) := \{x \in \Sp(A) : |f_j(x)| \leq |\epsilon|, \forall j=1,\ldots,l\}\] has an admissible affinoid covering $(\Sp(B_i)\rightarrow\Sp(B)), i= 1,\ldots, r$ along with isomorphisms 
\[\phi_i : (B_i/\mathfrak{a}B_i)\{x_1,\ldots,x_{n_i}\} \xrightarrow{\cong} B_i\]
from the free affinoid algebra over $B_i/\mathfrak{a}B_i$ in the variables $x_1,\ldots, x_{n_i},$ such that the elements $\phi_{i}(X_1),\ldots, \phi_i(x_{n_i})$ generate the ideal $\mathfrak{a}B_i.$ 
\end{theorem}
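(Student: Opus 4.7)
The plan is to reduce to a local statement around each point of $V(\mathfrak{a})$, use the absolute regularity hypothesis to produce a system of local generators of $\mathfrak{a}$ that cut out $V(\mathfrak{a})$ transversally, and then construct the desired isomorphism with a free Tate algebra via a non-archimedean version of the local splitting of a regular closed immersion. Quasicompactness of $V(\mathfrak{a})$ inside $\Sp(A)$ will then let us assemble the local isomorphisms into a finite admissible affinoid covering.

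First I would fix a maximal ideal $x \in V(\mathfrak{a})$ and set $n_x := \dim A_x - \dim (A/\mathfrak{a})_x$. By the absolute regularity hypothesis, $A_x$ and $(A/\mathfrak{a})_x$ are regular local rings, so the conormal module $(\mathfrak{a}/\mathfrak{a}^2)_x$ is free of rank $n_x$ over $(A/\mathfrak{a})_x$. By Nakayama I can select $n_x$ of the generators, say after relabelling $f_1, \ldots, f_{n_x}$, whose images form a basis of $\mathfrak{a}/\mathfrak{a}^2$ at $x$; these then form a regular sequence at $x$, and on a sufficiently small admissible affinoid neighbourhood $\Sp(A')$ of $x$ in $\Sp(A)$ the remaining generators $f_{n_x+1}, \ldots, f_l$ become $A'$-linear combinations of $f_1, \ldots, f_{n_x}$, so that $\mathfrak{a}A' = (f_1, \ldots, f_{n_x})$ with this sequence regular throughout $\Sp(A')$.

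Next I would choose $\epsilon \in k^\times$ with $|\epsilon|$ small enough so that the $\epsilon$-tube $\Sp(B')$ around $V(\mathfrak{a}) \cap \Sp(A')$ is contained in $\Sp(A')$; concretely, $B' = A'\{\epsilon^{-1} f_1, \ldots, \epsilon^{-1} f_{n_x}\}$. The candidate isomorphism is the $B'/\mathfrak{a}B'$-algebra map
\[
\phi \colon (B'/\mathfrak{a}B')\{T_1, \ldots, T_{n_x}\} \longrightarrow B', \qquad T_k \longmapsto \epsilon^{-1} f_k ,
\]
once one has constructed a section of $B' \twoheadrightarrow B'/\mathfrak{a}B'$ to make sense of the source as a subalgebra of the target. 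Such a section exists locally: regularity of the sequence $f_1, \ldots, f_{n_x}$ together with the non-archimedean Weierstrass division theorem applied iteratively to these generators writes every element of $B'$ uniquely as a convergent power series in the $\epsilon^{-1} f_k$'s with coefficients pulled back from functions on $V(\mathfrak{a}) \cap \Sp(A')$, and this expansion furnishes both the section and the inverse of $\phi$.

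The main obstacle is showing that $\phi$ is actually a bijection. Injectivity follows from the uniqueness of the Weierstrass expansion: any element in the kernel would have all of its expansion coefficients lying in $\mathfrak{a}$, contradicting the choice of section. Surjectivity is the harder half and requires the convergence estimates from the Weierstrass division theorem to propagate globally across $\Sp(B')$; this is precisely what forces us to restrict to the $\epsilon$-tube (the iterated Weierstrass expansions only converge once $|f_k| \leq |\epsilon|$ uniformly), and selecting a uniform $\epsilon$ that works on a neighbourhood of $x$ is the critical quantitative step. Once this is achieved at each $x \in V(\mathfrak{a})$, quasicompactness of the affinoid $V(\mathfrak{a}) \subseteq \Sp(A)$ lets us extract a finite subcover $\Sp(B_i) \to \Sp(B)$ of the global $\epsilon$-tube (after shrinking $\epsilon$ to a common value), each equipped with its isomorphism $\phi_i$, yielding the statement.
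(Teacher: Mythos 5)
This statement is not proved in the paper at all: it is quoted verbatim from Kiehl (\cite[Theorem 1.18]{kiehl1967derham}) and used as a black box in the proof of \autoref{riemann-extension-subanalytic}, so there is no in-paper argument to compare against. Judged on its own terms, your outline gets the architecture right --- localize at a point of $V(\mathfrak{a})$, use regularity to replace $\mathfrak{a}$ by a regular sequence $f_1,\dots,f_{n_x}$ of length $n_x=\dim A_x-\dim(A/\mathfrak{a})_x$, define $\phi$ by $T_k\mapsto \epsilon^{-1}f_k$, and glue by quasicompactness --- and you correctly identify that everything hinges on producing a $k$-algebra section of $B'\twoheadrightarrow B'/\mathfrak{a}B'$.

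The gap is that this section does not come from Weierstrass division, and the step where you invoke it is circular. Weierstrass division in a Tate algebra divides by an element that is distinguished in a chosen \emph{coordinate}; the $f_k$ are arbitrary functions on $\Sp(A')$, and rewriting them as coordinates (uniformly on a neighbourhood, not just at the closed point $x$) is exactly the content of the theorem you are trying to prove. Moreover, even granting a division algorithm, saying the coefficients are ``pulled back from functions on $V(\mathfrak{a})\cap\Sp(A')$'' already presupposes an embedding of $B'/\mathfrak{a}B'$ into $B'$, i.e.\ the section. The actual mechanism (and the point of the absolute regularity hypothesis on $A/\mathfrak{a}$) is infinitesimal lifting: $B'/\mathfrak{a}B'$ is formally smooth over $k$, so the identity lifts successively through the square-zero extensions $B'/\mathfrak{a}^{m+1}B'\to B'/\mathfrak{a}^{m}B'$, and these lifts converge to a section because on the $\epsilon$-tube one has $|f_j|\leq|\epsilon|<1$, which makes $\mathfrak{a}B'$ topologically nilpotent --- this is the real reason the statement is only true after shrinking to the tube, not a convergence issue in a Weierstrass expansion. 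Once the section exists, bijectivity of $\phi$ is better proved by observing that regularity of the sequence gives $\mathrm{gr}_{\mathfrak{a}}(B')\cong (B'/\mathfrak{a}B')[T_1,\dots,T_{n_x}]$, so $\phi$ is an isomorphism on associated graded pieces, and then passing to completions. As written, your proposal assumes the key lemma rather than proving it; to repair it you would need to either import Kiehl's lifting theorem (his Satz 1.14) or prove a non-archimedean formal-smoothness/implicit-function statement from scratch.
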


 We recall a result on the number of zeroes of a convergent power series in one variable that shall be used in our proof and then prove the Lemma that allows us to make the reduction mentioned in Step 2.  

\begin{lemma}\label{zeroes-of-tate-functions-new}
Suppose $f(t) \in K\{ t \}$ is an element of the one-dimensional Tate algebra over $K$. Let $\epsilon(f) := \max\{i \geq 0 : |a_i| = \lVert f \rVert_{\text{Gauss}}\}.$ Then the number of zeroes of $f$ (counting multiplicities) in the closed unit disk $K^\circ$ is at least $\epsilon(f).$
\end{lemma}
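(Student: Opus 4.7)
The statement is a direct consequence of Weierstrass preparation in the one-variable Tate algebra, and the plan is as follows.

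After replacing $f$ by $f/c$ for some $c \in K^\times$ with $|c| = \lVert f \rVert_{\text{Gauss}}$, I may assume without loss of generality that $\lVert f \rVert_{\text{Gauss}} = 1$. Set $N := \epsilon(f)$. By hypothesis $|a_N| = 1$ and $|a_i| < 1$ for all $i > N$, so $f$ is \emph{$t$-distinguished of degree $N$} in the standard sense. I would then invoke the Weierstrass preparation theorem for $K\{t\}$ (in its form with integral control, see Bosch--G\"untzer--Remmert) to write
\[
f = u \cdot P,
\]
where $u \in K^\circ\{t\}$ is a unit with $|u(0)| = 1$ and $|u - u(0)|_{\text{Gauss}} < 1$, and $P \in K^\circ[t]$ is a monic polynomial of degree $N$.

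Since $u$ is a unit in $K^\circ\{t\}$ of the above form, the non-archimedean triangle inequality shows $|u(\alpha)| = 1$ for every $\alpha \in K^\circ$, so $u$ has no zeros in $K^\circ$. Thus the zeros of $f$ in $K^\circ$, counted with multiplicity, coincide with those of $P$. It therefore remains to show that every root of $P$ lies in $K^\circ$. Since $K$ is algebraically closed, $P$ splits as $P(t) = \prod_{j=1}^N (t - \alpha_j)$ with $\alpha_j \in K$. Writing $P(t) = t^N + c_{N-1} t^{N-1} + \cdots + c_0$ with $|c_i| \leq 1$, if some root $\alpha_j$ satisfied $|\alpha_j| > 1$, then from $\alpha_j^N = -\sum_{i < N} c_i \alpha_j^i$ and the non-archimedean triangle inequality,
\[
|\alpha_j|^N \leq \max_{i < N}|c_i|\,|\alpha_j|^i \leq |\alpha_j|^{N-1},
\]
contradicting $|\alpha_j| > 1$. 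Hence every $\alpha_j$ lies in $K^\circ$, and $f$ has at least (in fact exactly) $N = \epsilon(f)$ zeros in $K^\circ$ counted with multiplicity.

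There is no real obstacle here: the only slightly delicate point is to make sure one invokes the integral form of Weierstrass preparation so that $P$ really has coefficients in $K^\circ$, which is what makes the elementary triangle-inequality argument bounding the roots go through. Equivalently, one could phrase the whole argument in terms of the Newton polygon of $f$: the hypothesis $|a_N| = \lVert f\rVert_{\text{Gauss}}$ and $|a_i| < \lVert f \rVert_{\text{Gauss}}$ for $i > N$ forces $(N, -\log|a_N|)$ to be the rightmost vertex of the Newton polygon lying at minimal height, so all segments of non-positive slope have total horizontal length exactly $N$, giving $N$ roots of absolute value $\leq 1$.
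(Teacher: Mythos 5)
Your proof is correct and takes essentially the same route as the paper: both observe that $f$ is $t$-distinguished of degree $\epsilon(f)$ and invoke Weierstrass preparation for $K\{t\}$, with the paper simply asserting that the resulting degree-$\epsilon(f)$ polynomial has all its zeroes in $K^\circ$. You supply the details the paper leaves implicit (the unit factor is nonvanishing on $K^\circ$, and the integrality of the coefficients of the Weierstrass polynomial forces its roots into $K^\circ$), which is a worthwhile but not substantively different elaboration.
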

\begin{proof}
Note that $f$ is ``$t$-distinguished" of degree $\epsilon(f)$ (see \cite[\S 5.2.1, Definition 1]{bosch1984non}). By the Weierstrass Preparation Theorem for Tate algebras (\cite[\S 5.2.2, Theorem 1]{bosch1984non}) we may write $f = e\cdot \omega$ where $e \in K\{t\}^\times$ and $\omega \in K[t]$ is a polynomial of degree $\epsilon(f),$ and $\omega$ has $\epsilon(f)$ zeroes (counting multiplicities) in $K^\circ.$ 
\end{proof}

\begin{lemma}\label{finite-descent-riemann-extension-lemma}
Let $\pi : X \rightarrow S$ be a finite morphism of reduced, irreducible affinoids over $K$ of the same dimension. Suppose $Y \subseteq X$ is a closed analytic subvariety of $X$. Let $T := \pi(Y).$ Suppose that every analytic function $g \in \Ok_S(S\setminus T)$ extends uniquely to a meromorphic function $g^* \in \mathcal{M}(S)$ on $S.$ Then every analytic function $f \in \Ok_X(X \setminus Y)$ extends to a meromorphic function on $X.$  
\end{lemma}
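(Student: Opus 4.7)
The plan is to use the finite morphism $\pi : X \to S$ to descend $f$ to ``trace''-type data on $S$, meromorphically extend that data using the hypothesis, and then reassemble an $\tilde f \in \mathcal{M}(X)$ agreeing with $f$ on $X \setminus Y$. Set $A := \Ok_X(X)$ and $B := \Ok_S(S)$; both are domains, and the finite surjection $\pi$ induces a finite separable field extension $\mathcal{M}(X) = \mathrm{Frac}(A) \supseteq \mathrm{Frac}(B) = \mathcal{M}(S)$ of some degree $d$. I fix elements $g_1, \ldots, g_d \in A$ whose images form a basis of $\mathcal{M}(X)$ over $\mathcal{M}(S)$, and let $D \subseteq S$ be the (codimension-$\geq 1$) branch locus of $\pi$, so that $\pi$ is finite \'etale of degree $d$ over $V := S \setminus D$.

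For $s \in V \cap (S \setminus T)$ with fiber $\pi^{-1}(s) = \{x_1(s), \ldots, x_d(s)\}$, I introduce the symmetric-function data
\[
u_j(s) := \sum_{k=1}^d f(x_k(s))\, g_j(x_k(s)),
\qquad
M_{ij}(s) := \sum_{k=1}^d g_i(x_k(s))\, g_j(x_k(s)).
\]
The matrix $M = (M_{ij})$ is the Gram matrix of the trace form $(a,b) \mapsto \mathrm{Tr}_{\mathcal{M}(X)/\mathcal{M}(S)}(ab)$ at $(g_i, g_j)$; its entries lie in $\mathcal{M}(S)$ and $\det M \not\equiv 0$ by separability. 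The $u_j$'s are analytic on $V \cap (S \setminus T)$ and locally bounded on $S \setminus T$, so they extend across the codimension-$\geq 1$ analytic set $D \cap (S \setminus T)$ to elements of $\Ok_S(S \setminus T)$. By the hypothesis of the lemma, each $u_j$ then extends further to some $u_j^* \in \mathcal{M}(S)$. Solving the invertible linear system $M \alpha = u^*$ over $\mathcal{M}(S)$ yields $\alpha = (\alpha_1, \ldots, \alpha_d) \in \mathcal{M}(S)^d$, and produces the candidate
\[
\tilde f \;:=\; \sum_{i=1}^d \pi^*(\alpha_i)\, g_i \;\in\; \mathcal{M}(X).
\]

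To finish, I would verify $\tilde f = f$ on $X \setminus Y$. By $\mathcal{M}(S)$-linearity of the trace form, the defining equations of $\alpha$ force $\sum_k (f - \tilde f)(x_k(s))\, g_j(x_k(s)) = 0$ for every $j$ and for every $s$ in the \'etale locus where $\tilde f$ has no poles. On the further dense open subset where the matrix $(g_j(x_k(s)))_{j,k}$ is invertible---dense open because its degenerate locus lies inside $\{\det M = 0\}$---this linear algebra forces $(f - \tilde f)|_{\pi^{-1}(s)} = 0$. Hence $f$ and $\tilde f$ agree on a dense open subset of $X \setminus Y$, and analyticity of $f$ on $X \setminus Y$ then prevents $\tilde f$ from acquiring any poles there, yielding $f = \tilde f|_{X \setminus Y}$. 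I expect the principal technical point to be showing that the $u_j$'s extend from the \'etale locus to honest elements of $\Ok_S(S \setminus T)$; the cleanest way to sidestep an appeal to the classical rigid-analytic Riemann extension theorem is to reinterpret $u_j$ as the trace of multiplication by $f g_j$ on the coherent $\Ok_{S \setminus T}$-module $\pi_*\Ok_X|_{S \setminus T}$, computed locally on an affinoid cover, which then manifestly lies in $\Ok_S(S \setminus T)$.
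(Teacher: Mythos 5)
Your argument is, in substance, the same one the paper uses: the paper's proof of this lemma consists of citing L\"utkebohmert's Satz~1.7 (whose proof is precisely this trace-form descent along the finite map) and then invoking the identity theorem for meromorphic functions to pass from agreement on $X\setminus\pi^{-1}(T)$ to agreement on $X\setminus Y$. Your final step plays the role of that identity-theorem argument, and your reduction of ``invertibility of $(g_j(x_k))_{j,k}$'' to the nonvanishing of $\det M=(\det(g_j(x_k)))^2$ is correct. So the route is not genuinely different; you are unpacking the citation.

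Two points in your writeup are genuine gaps at the stated level of generality. First, you assert that $\mathcal{M}(X)/\mathcal{M}(S)$ is \emph{separable}. The lemma allows $K$ of positive characteristic and an arbitrary finite morphism of reduced irreducible affinoids of equal dimension; for a purely inseparable $\pi$ (e.g.\ relative Frobenius on $\B^1$) the trace form is identically zero, $\det M\equiv 0$, and your linear system is unsolvable. You would need either to restrict to characteristic zero, to arrange the finite map to be generically \'etale (which is what matters in the paper's application, where $\pi$ is a Noether normalization onto $\B^d$ and $K$ is perfect), or to treat the inseparable part separately. Second, the step ``$u_j$ is locally bounded on $S\setminus T$, hence extends across $D\cap(S\setminus T)$ to an element of $\Ok_S(S\setminus T)$'' is the first Riemann removable-singularity theorem, which requires $S$ to be \emph{normal}; the lemma only assumes $S$ reduced and irreducible. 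Your proposed workaround via the trace of multiplication by $fg_j$ on $\pi_*\Ok_X|_{S\setminus T}$ does not repair this: for a finite but non--locally-free module over a non-normal ring, that trace is only defined after passing to the total ring of fractions, so a priori $u_j$ lies in $\mathrm{Frac}(\Ok_S(S\setminus T))$ rather than in $\Ok_S(S\setminus T)$, which is exactly where the hypothesis of the lemma needs it. Both issues disappear when $S=\B^d$ (regular, hence normal, and over a perfect field one can choose a generically \'etale normalization), which is the only case the paper uses; but a self-contained proof of the lemma as stated must address them, or the statement must be narrowed.
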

\begin{proof}
Following the proof of \cite[Satz 1.7]{lutkebohmert1974satz}, we see that for any $f \in \Ok_X(X\setminus Y)$ there is a meromorphic $f^* \in \mathcal{M}(X)$ such that $f^*\vert_{X\setminus\pi^{-1}(T)} = f \vert_{X\setminus\pi^{-1}(T)}.$ However, two meromorphic functions that agree on the complement of a positive codimensional analytic subvariety must agree everywhere (see \cite[Lemma 1.1]{lutkebohmert1974satz}). It thus follows that $f^* \vert_{X\setminus Y} = f.$
\end{proof}

We are now equipped to fully prove \autoref{riemann-extension-subanalytic}.

\subsection*{Proof of the rigid subanalytic Riemann extension theorem, \autoref{riemann-extension-subanalytic}.}

\begin{proof}
We first reduce to the case where $X$ is a reduced affinoid space over $K$. Indeed, to make this reduction consider an admissible covering of $X$ by affinoid subdomains $X = \cup_{i \in I} U_i.$ For each $i \in I$, $U_i \cap Y$ is an analytic subvariety of $U_i$ of positive codimension at every point of $U_i$ and furthermore $f \vert_{U_i \setminus Y}$ has a locally subanalytic graph in $U_i(K) \times K$. Suppose that we were able to find for every $i$, meromorphic functions $f_i^* \in \mathcal{M}(U_i)$ such that $f_i^* \vert_{U_i\setminus Y} = f \vert_{U_i \setminus Y}$. Then we note that for any $i, j \in I$ the meromorphic functions $f_i^*\vert_{U_i \cap U_j}$ and $f_j^*\vert_{U_i \cap U_j}$ agree on the complement of the positive codimensional subvariety $Y \cap U_i \cap U_j$ and hence by \cite[Lemma 1.1]{lutkebohmert1974satz}, agree on $U_i \cap U_j$. Thus, the $\{f_i^*\}_{i \in I}$ glue to a global meromorphic function $f^*$ on $X$ extending $f$. We may thus assume henceforth that $X = \Sp(A)$ is a reduced affinoid space over $K$.  

By working on the irreducible components of $X$ we may assume that $X$ is irreducible, and hence that $A$ is an integral domain. Choose a Noether normalization for $X$, i.e. a finite surjective morphism $\pi : X \rightarrow \B^d$ where $d = \dim(X),$ and with the help of \autoref{finite-descent-riemann-extension-lemma} we further assume that $X = \B^d$ is the $d$-dimensional unit disk over $K$. 

Let $f \in \Ok(\B^d\setminus Y)$ be an analytic function such that its graph is locally subanalytic. In order to show that $f$ extends meromorphically to $X$, it suffices to show that $f$ extends to an $f^*\in \mathcal{M}(\B^d \setminus \mathrm{Sing}(Y))$ such that $f^* \vert_{\B^d \setminus Y} = f $. Indeed, since $\mathrm{Sing}(Y)$ is an analytic subset of codimension at least 2 in $\B^d,$ we have an isomorphism $\mathcal{M}(\B^d) \xrightarrow{\cong} \mathcal{M}(\B^d\setminus \mathrm{Sing}(Y))$ by the non-archimedean Levi extension theorem \cite[Theorem 4.1]{lutkebohmert1974satz}. Consider an admissible affinoid covering $\B^d\setminus \mathrm{Sing}(Y) = \cup_i U_i.$ We remark that affinoid subdomains being finite unions of rational subdomains are indeed rigid subanalytic sets and hence $f \vert_{U_i\setminus Y}$ has a locally subanalytic graph. Using \cite[Lemma 1.1]{lutkebohmert1974satz}, we may work individually over each $U_i$, i.e. we are reduced to proving the theorem in the situation where
$X = \Sp(A)$ is an affinoid subdomain of $\B^d$ and $Y \subseteq X$ is a \emph{regular} analytic subvariety of $X$ of positive codimension everywhere. 

Applying the `tubular neighbourhood' result of Kiehl \cite[Theorem 1.18]{kiehl1967derham}, we obtain an admissible covering $(\Sp(B_i)\rightarrow\Sp(B)), i = 1,\ldots, l$ of some `$\epsilon$-tube' $\Sp(B)$ around $Y$ in $X = \Sp(A).$ It suffices now to prove that for every $i = 1,\ldots, l$, $f\vert_{\Sp(B_i)\setminus Y}$ extends to a meromorphic function $f_i^* \in \mathcal{M}(\Sp(B_i))$. Indeed, the $f_i^*$ must necessarily glue to a meromorphic function $f^* \in \mathcal{M}(\Sp(B))$ (using \cite[Lemma 1.1]{lutkebohmert1974satz}) such that $f^* \vert_{\Sp(B)\setminus Y} = f \vert_{\Sp(B) \setminus Y}$. Since the functions $f^* \in \mathcal{M}(\Sp(B))$ and $f \in \Ok(X\setminus Y)$ agree on the intersection $\Sp(B)\setminus Y$ and noting that $\Sp(B) \cup (X\setminus Y) = X$ is an \emph{admissible} open cover of $X = \Sp(A),$ the sections $f^*$ and $f$ glue to a global meromorphic function on $X$.

We are thus reduced to proving the theorem in the case that $X = \Sp(B_i/\mathfrak{a}B_i) \times \B^{n_i}$ and $Y = \Sp(B_i/\mathfrak{a}B_i) \times \{\underline{0}\}$. If $n_i \geq 2,$ then the codimension of $Y$ is at least $2$, and in this case the theorem follows as a special case of the non-archimedean Levi extension theorem \cite[Theorem 4.1]{lutkebohmert1974satz}. Thus we may even assume that $n_i =1$. In all, we are reduced to proving the following special case of the theorem in \autoref{riemann-extension-product-lemma}.
\end{proof}

\begin{lemma}\label{riemann-extension-product-lemma}
Suppose $Z = \Sp(A)$ is a reduced, irreducible affinoid space, $X = Z \times \B^1,$ and $Y = Z\times \{0\} \subseteq X.$ Then every analytic function $f \in \Ok(X\setminus Y)$ whose graph is a locally subanalytic subset of $X \times K$ extends to a meromorphic function $f^* \in \mathcal{M}(X).$ 
\end{lemma}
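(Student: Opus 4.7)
The plan is to exploit the product structure $X \setminus Y = Z \times (\B^1 \setminus \{0\})$ and reduce the statement to a one-variable claim. Since $\B^1 \setminus \{0\}$ is quasi-Stein, any $f \in \Ok(X \setminus Y)$ admits a unique Laurent expansion $f(z,t) = \sum_{n \in \Z} a_n(z)\,t^n$ with $a_n \in A$, converging on every compact annulus. The meromorphic extendability of $f$ to $X$ is equivalent to $a_n = 0$ in $A$ for all sufficiently negative $n$, so that $t^N f \in \Ok(X) = A\langle t\rangle$ for some $N \geq 0$. I would argue this by contradiction, producing a single fiber $f_{z_0}$ with essential singularity at $t=0$ and then contradicting the subanalyticity of its graph.

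For the first half, suppose infinitely many $a_{-k}$ (with $k \geq 1$) are nonzero in $A$; let $I$ denote the infinite set of such indices. Since $Z$ is irreducible and reduced, each $V_k := \{z \in Z(K) : a_{-k}(z) = 0\}$ is closed and nowhere-dense in $Z(K)$: if $a_{-k}$ vanished on a nonempty open subset of $Z(K)$, it would vanish on a Zariski-dense affinoid subdomain of $Z$ and hence be zero in $A$. A closed embedding $Z \hookrightarrow \B^N$ realizes $Z(K)$ as a closed subset of the complete metric space $K^N$, making it a Baire space. Baire category then furnishes $z_0 \in Z(K) \setminus \bigcup_{k \in I} V_k$, at which the one-variable series $g(t) := f(z_0, t)$ has nonzero $t^{-k}$ coefficient for every $k \in I$, i.e.\ essential singularity at $t=0$. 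Slicing the locally subanalytic graph of $f$ at $z_0$ produces the graph of $g$ as a subanalytic subset of $K^\circ \times K$.

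To reach the contradiction, I would prove the following one-variable lemma: any analytic $g : \B^1 \setminus \{0\} \to K$ with subanalytic graph is meromorphic on $\B^1$. For each $R \in |K^\times|$, the set $U_R := \{t \in K^\circ \setminus \{0\} : |g(t)| \leq R\}$ is the projection of $\Gamma_g \cap (K^\circ \times \{|w| \leq R\})$, hence subanalytic in $K^\circ$. By the Lipshitz--Robinson classification \cite{lipshitz1996rigid}, $U_R$ is a finite Boolean combination of disks; and since every disk in $K^\circ$ is either a neighborhood of $0$ or disjoint from a neighborhood of $0$, the set $U_R$ on a sufficiently small punctured neighborhood of $0$ must coincide with either all of it or none of it. This yields a sharp dichotomy: either some $U_R$ contains a punctured neighborhood of $0$, so $g$ is locally bounded near $0$ and extends analytically by the bounded non-archimedean Riemann extension theorem; or $|g(t)| \to \infty$ as $t \to 0$, so $1/g$ extends analytically across $0$ with a zero of some positive order $N$, whence $g$ has a pole of order $N$. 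Both alternatives contradict the essential singularity of $g = f_{z_0}$.

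The main obstacle I foresee is the sharp one-variable dichotomy. Turning the qualitative hypothesis "$\Gamma_g$ is subanalytic" into the precise bounded-or-blows-up alternative relies essentially on the one-dimensional Lipshitz--Robinson classification, which has no direct higher-dimensional analogue in the form needed; this is where the codimension-one aspect of the problem is really used. The other ingredients — Laurent expansion in $t$ and a Baire-category choice of a single witnessing fiber — are comparatively standard packaging once the one-dimensional lemma is in hand.
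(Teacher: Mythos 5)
Your proposal is correct, and its overall skeleton --- Laurent expansion in $t$, reduction to a single fiber chosen by Baire category on $Z$, and the identity principle on the reduced irreducible affinoid $Z$ to make each $V_k$ nowhere dense --- is the same as the paper's. Where you genuinely diverge is in the one-variable core. The paper examines the \emph{zero set} of the principal part $g(z,\cdot)=\sum_{j>0}b_j(z)t^{-j}$: it is a discrete subanalytic subset of the punctured disk, hence finite, while the quantitative zero-counting statement (\autoref{zeroes-of-tate-functions-new}, via Weierstrass preparation applied to $h_{z,R}$) shows that infinitely many nonzero coefficients would force infinitely many zeroes escaping to the puncture. You instead examine the \emph{sublevel sets} $U_R=\{|g|\le R\}$: the Lipshitz--Robinson classification of one-variable subanalytic sets as Boolean combinations of disks forces each $U_R$ to be all or nothing on a small punctured neighbourhood of $0$, yielding the bounded-or-blows-up dichotomy, and either alternative (removable singularity, resp.\ finite-order pole via $1/g$) rules out an essential singularity. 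Your route isolates a clean standalone intermediate statement --- an analytic function on the punctured disk with subanalytic graph is meromorphic at the puncture --- at the price of invoking the disk classification for sublevel sets together with the non-archimedean removable-singularity theorem; the paper's route uses only the softer consequence that discrete subanalytic sets are finite, at the price of the explicit $\epsilon(h_{z,R})$ computation. Both ultimately rest on the same one-dimensional tameness input, and the remaining steps of your argument (definability of the slice at $z_0$, recovery of the fiber's Laurent coefficients, and the Baire argument on $Z(K)$ as a closed subset of $K^N$) all check out.
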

\begin{proof}
We denote the coordinate on $\B^1$ as $t$. Let $|\cdot|$ represent the supremum norm on the reduced affinoid $A$. Note that $A$ is a Banach algebra over $K$ when endowed with its supremum norm and the supremum norm is equivalent to any residue norm on $A$ (see \cite[\S 6.2.4, Theorem 1]{bosch1984non}).

We may expand $f(t) = \sum_{i\geq 0} a_i t^i + \sum_{j > 0} b_j t^{-j}$ with $a_i, b_j \in A,$ such that $\lim_i |a_i| = 0$ and for every $R > 0,  \lim_j |b_j|R^{j} = 0.$ Since $\sum_i a_i t^i \in A\{t\}$, we have that $\sum_i a_i t^i$ is a rigid subanalytic function on $Z \times \B^1$, and thus the function $g := \sum_{j > 0} b_j t^{-j}$ defined on $X\setminus Y$ has a graph that is a locally subanalytic subset of $X \times K$.  
In particular, for each $z \in Z,$ the function $g(z, t) = \sum_{j > 0} b_j(z) t^{-j}$ on the punctured disc $\B^1\setminus\{0\}$ is also locally subanalytic. Since discrete subanalytic sets must be finite, we get that for each fixed $z$ either $g(z,t)$ is identically zero on $\B^1 \setminus \{0\}$ or $g(z,t)$ has finitely many zeroes in $\B^1 \setminus \{0\}$. 

Consider $h_z(y) := g(z, y^{-1}) = \sum_{j > 0} b_j(z) y^j$. The growth hypothesis $\forall R \in K^\times , \lim_{j \rightarrow 0} |b_j| |R|^j = 0$ on the $b_j$ implies that $h_z(y) \in K\{R^{-1}y\}$ for every $R \in K^\times$. The number of zeroes of $g(z,t)$ on the annulus $|R|^{-1} \leq |t| \leq 1$ is the number of zeroes of $h_z(y)$ on $1 \leq |y| \leq |R|$. For each $R \in K^\times,$ we set $h_{z,R}(y) := \sum_{j > 0} (b_jR^{-j}) y^j$ so that by \autoref{zeroes-of-tate-functions-new} the number of zeroes of $h_z(y)$ on the closed disk $|y| \leq R$,  is given by $\epsilon(h_{z,R}(y))$. 

Now for $i < j$ if $b_i(z), b_j(z) \neq 0$, then for $R$ large enough $|b_i(z)| R^i \leq |b_j(z)| R^j$ and thus $\epsilon(h_{z,R})\geq j.$ Thus, if $b_j(z) \neq 0$ for infinitely many $j$, $h_z(y)$ has infinitely many zeroes going off to $\infty$ and therefore also $g(z,t)$ has an infinite discrete zero set in $\B^1 \setminus \{0\}$ which as noted above is not possible. Thus, for each $z \in Z$, $b_j(z)$ is eventually $0$.
In other words, $Z = \cup_{m \geq 0} \cap_{j > m} V(b_j)$. If the set $\cap_{j > m} V(b_j)$ is not equal to $Z$ then it is a nowhere dense closed subset of $Z$. By the Baire category theorem, $Z$ cannot be a countable union of nowhere dense closed subsets and therefore for large enough $m$, $\cap_{j > m} V(b_j) = V(\sum_{j > m} (b_j))$ must be equal to $Z$. Since $Z$ is reduced, this means that the $b_j \in A$ are eventually zero. Thus $f$ has a finite order pole along $Y$ and hence extends meromorphically. This completes the proof of the Lemma and thus also of \autoref{riemann-extension-subanalytic}.
\end{proof}

\section{Tame structures}\label{ch:tame-structures}

In this section we introduce the notion of a tame structure. The definition of a tame structure closely follows the definition of an o-minimal structure on $\R$ and is suitably adapted as a generalization of the non-archimedean rigid subanalytic sets discussed in the previous section. Let $K$ be a non-trivially valued non-archimedean field with valuation ring $R$ and totally ordered value group $(\Gamma, <)$. A `structure' on $R$, is going to be a collection of subsets of $R^n$ for every $n \geq 0.$ In fact, it turns out to be convenient to keep track of definable subsets of the value group $\Gamma$ as well. Thus, in this setting a `structure' on $R$, is actually a collection of subsets of $R^m \times \Gamma^n$ for $m, n \geq 0$ that are closed under the natural first-order operations (see \autoref{def:structure-non-archimedean} for the precise conditions). A `tame structure' is then defined to be one where the definable subsets of $R$ are precisely the Boolean combinations of disks of $R$. In \autoref{sec:preliminaries-on-tame-structures}, we provide these preliminary definitions and prove some elementary properties of sets definable in tame structures. 

In \autoref{sec:dimension-theory-of-tame-structures}, we develop the basic dimension theory of sets definable in tame structures. As in the o-minimal setting, the dimension of a non-empty definable set $X \subseteq R^m$ is defined as the largest $d \leq m$ such that for some coordinate projection $\pi :R^m \rightarrow R^d$ we have that the interior of $\pi(X)$ in $R^d$ is non-empty. The two key results we prove in this section are: \begin{itemize*}
    \item the invariance of dimension under definable bijections (\autoref{definable-bijection-dimension}) and
    \item the Theorem of the Boundary, \autoref{theorem-of-boundary} which states that for a definable set $X \subseteq R^m$, $\dim(\mathrm{cl}(X)\setminus X) <\dim(X)$.
\end{itemize*}

The purpose of \autoref{sec:recollections-on-dim-theory-of-rigid-varieties} is to collect together some results in the dimension theory of rigid geometry that are needed for the sequel.
Most importantly, we connect the usual notion of dimension in the rigid analytic setting with the concept of definable dimension of the previous section (\autoref{rigid-dimension-equals-definable-dimension}). We also prove in \autoref{local-ring-is-equidimensional}, a result on the dimensions of local rings of equidimensional rigid varieties. This lemma is used in the course of the proof of the definable Chow theorem.  

\subsection*{Notations and conventions for this section} 
 For a subset $X$ of a topological space $Y$ endowed with the subspace topology, the interior, closure, and frontier of $X$ inside $Y$ are denoted by $\mathrm{int}_Y(X), \mathrm{cl}_Y(X)$ and $\mathrm{Fr}_Y(X)$ respectively. We often omit writing the subscript $Y$ when the ambient topological space is clear from the context. We recall that the frontier of $X$ in $Y$ is defined as $\mathrm{Fr}_Y(X):= \mathrm{cl}_Y(X)\setminus X.$ 
 
$K$ denotes a field complete with respect to a non-trivial non-archimedean absolute value $|\cdot| : K \rightarrow \R_{\geq 0}.$ $R$ denotes the valuation ring of $K,$ $\Gamma^\times := |K|$ the value group of $K,$ and $\Gamma := \Gamma^\times \cup \{0\}.$ We choose a pseudo-uniformizer $\varpi \in K^\times,$ i.e. a non-zero element $\varpi \in R$ with $|\varpi| < 1.$

 For an element $\underline{x}=(x_1,\ldots,x_n) \in K^n,$ we set $\norm{\underline{x}} := \max_{1\leq i \leq n} |x_i|.$ For $\underline{x}=(x_1,\ldots,x_n) \in K^n$ and $\underline{r}=(r_1,\ldots,r_n) \in \Gamma^n,$ denote by $\mathbb{D}(\underline{x};\underline{r}) := \{\underline{y}=(y_1,\ldots,y_n) \in K^n : |x_i-y_i| < r_i \text{ for all }i\}$ and let $\overline{\mathbb{D}}(\underline{x};\underline{r}) := \{\underline{y} \in K^n : |x_i-y_i| \leq r_i \text{ for all }i\}.$ The set $\mathbb{D}(\underline{x},\underline{r})$ is referred to as an \emph{open polydisk (or simply open disk) of poyradius $\underline{r}$} and $\overline{\mathbb{D}}(\underline{x},\underline{r})$ as the \emph{closed polydisk/disk of polyradius $\underline{r}$}. 

\emph{We shall assume from now on that $K$ is second countable, i.e. that $K$ has a countable dense subset.} However, when working with the collection of $\mathcal{H}$-subanalytic sets, the hypothesis of second countability can be eliminated from most statements(see \autoref{second-countability-justification}).

\subsection{Preliminaries}\label{sec:preliminaries-on-tame-structures}

\subsubsection{Tame structures}
\begin{definition}\label{def:structure-non-archimedean}
A \emph{structure} on $(R,\Gamma)$ is a collection $\left(\mathfrak{S}_{m,n}\right)_{m, n \geq 0}$ where each $\mathfrak{S}_{m,n}$ is a collection of subsets  of $R^m\times \Gamma^n$ with the following properties:
\begin{enumerate**}
    \item $\mathfrak{S}_{m,n}$ is a Boolean algebra of subsets of $R^m \times \Gamma^n$
    \item If $\mathcal{S} \in \mathfrak{S}_{m,n}$ then $R \times \mathcal{S} \in \mathfrak{S}_{m+1,n}$ and $\mathcal{S} \times \Gamma \in \mathfrak{S}_{m,n+1}$.
    \item The diagonal $\{(x,x): x\in R\} \in \mathfrak{S}_{2,0},$ and similarly $\{(\alpha,\alpha)\in \Gamma^2: \alpha\in \Gamma\} \in \mathfrak{S}_{0,2}.$
    \item If $\mathcal{S} \in \mathfrak{S}_{m,n}$ then $\text{pr}(\mathcal{S}) \in \mathfrak{S}_{m-1,n}$ and $\text{pr}'(\mathcal{S}) \in \mathfrak{S}_{m,n-1},$ where $\text{pr} : R^m \times \Gamma^n \rightarrow R^{m-1} \times \Gamma^n$ denotes the projection forgetting the last $R$ factor and similarly $\text{pr}' : R^m\times \Gamma^n \rightarrow R^m\times \Gamma^{n-1}$ denotes the projection omitting the last $\Gamma$ factor. 
\end{enumerate**}
\end{definition}

\begin{definition}
We say that a structure $(\mathfrak{S}_{m,n})_{m,n\geq 0}$ on $(R,\Gamma)$ is \emph{tame} if 
\begin{itemize*}
    \item $+,\cdot : R^2 \rightarrow R$ are definable i.e. their graphs are in $\mathfrak{S}_{3,0}.$
    \item $| \cdot | : R \rightarrow \Gamma$ is definable i.e. its graph $\{(x,|x|) : x \in R\} \subseteq R\times \Gamma$ is in $\mathfrak{S}_{1,1}$
    \item $\mathfrak{S}_{0,1}$ is the collection of finite unions of (open) intervals and points in the totally ordered abelian group $\Gamma$
    \item $\mathfrak{S}_{1,0}$ is the collection of subsets of $R$ consisting of the Boolean combination of disks (open or closed).
\end{itemize*}
\end{definition}
\begin{remark}
It follows from the axioms that in a tame structure $(R,\Gamma)$, the ordering on $\Gamma$ is also definable, i.e. the set $\{(\lambda, \mu) \in \Gamma^2 : \lambda < \mu\}$ is in $\mathfrak{S}_{0,2}.$
\end{remark}

\begin{remark}[The relation to $C$-minimal structures]\label{rem:relation-to-c-minimality}
We would like to point out that the notion of a tame structure is closely related to the definition of a $C$-minimal field which is a special case of the notion of a $C$-minimal structure. The theory of $C$-minimal structures was introduced in \cite{macpherson1996variants} by Macpherson and Steinhorn (building upon some work by Adeleke--Neumann \cite{adeleke1996primitive}), and has been further developed in the paper \cite{haskell1994cell} by Haskell and Macpherson. A $C$-relation is a ternary relation $C(x,y,z)$, satisfying certain axioms. We refer the reader to the above papers for the precise definitions. From our point of view, the central examples of $C$-minimal structures arise in the context of algebraically closed, non-trivially valued fields. Given such a field $K$ with a (multiplicatively written) non-trivial valuation $|\cdot| : K \rightarrow \Gamma^\times \cup \{0\}$ into a totally ordered abelian group $(\Gamma^\times, 1, \cdot, <)$, there is a natural $C$-relation that one may define:\[C(x,y,z) \iff |x-y| > |y-z|.\] For an expansion $(K,C,0,1,+,-,\cdot,\ldots)$ of the $C$-structure $(K,C)$ to be $C$-minimal it is necessary then that the definable subsets of $K$ (in the expanded language) are precisely the class of Boolean combinations of disks. However it appears that this might not be sufficient to claim that the structure is $C$-minimal, since for $C$-minimality one requires the same property to hold for \emph{every} structure elementarily equivalent to $(K,C,\ldots)$. 
The expansion of an algebraically closed non-trivially valued field with function symbols for elements of its strictly convergent power series rings (or more generally separated power series rings) is in fact a $C$-minimal expansion of the valued field. Thus, the rigid subanalytic sets discussed above are in fact examples of $C$-minimal structures.
In the general context of $C$-minimal structures, Haskell--Macpherson \cite[\S 4]{haskell1994cell} also prove some of the dimension theory results that we prove for tame structures in \autoref{sec:dimension-theory-of-tame-structures}. Nevertheless, we have retained the definition of a tame structure and the following results in their dimension theory to keep the exposition self-contained. Secondly, the proofs we are able to provide in this context are geometric and fairly elementary. Lastly, it appears that the invariance of dimensions under definable bijections is not known in the general setting of $C$-minimal structures or even for general $C$-minimal fields (see the discussion on \cite[p. 159]{haskell1994cell}).
\end{remark}

For the remainder of this section, we fix a tame structure on $(R,\Gamma),$ and definability of sets and maps will be in reference to this fixed structure.

\begin{example}[Rigid subanalytic sets] 
Suppose $K$ is algebraically closed. For such a $K$, the central example of a tame structure shall be those of the rigid subanalytic subsets of Lipshitz \cite{lipshitz2000model} and the $\mathcal{H}$-subanalytic sets defined in \cite{lipshitz2000model}. Indeed, it is proved in \cite{lipshitz1996rigid}, that the subanalytic subsets of $R$ are exactly the Boolean combinations of disks.  
\end{example}

For the sequel it shall also be convenient to talk about definable subsets of $K^n$. We make the following definition:

\begin{definition}\label{def:definable-in-C_p}
We say that a subset $\mathcal{S}\subseteq K^n$ is a definable subset of $K^n$ if the following equivalent conditions are satisfied: 
\begin{enumerate**}
    \item \label{definable-in-C_p-proj} $\pi_n^{-1}(\mathcal{S})\subseteq R^{n+1}$ is definable, where \begin{gather*}
        \pi_n : R^{n+1}\setminus\{0\} \rightarrow \mathbb{P}^n(R)=\mathbb{P}^n(K)
    \end{gather*} is the map sending $(z_0,z_1,\ldots,z_n) \mapsto [z_0:z_1:\ldots:z_n].$ 
    
    We view $K^n \subseteq \mathbb{P}^n(K)$ via the map $(z_0,z_1,\ldots,z_{n-1})\mapsto [z_0:z_1:\ldots:z_{n-1}:1].$
    
    \item \label{definable-in-C_p-balls} For every map $\epsilon :  \{1,2,\ldots,n\} \rightarrow \{\pm 1\}$ the set \begin{gather*}
        \mathcal{T}_\epsilon := \{(\alpha_1,\ldots,\alpha_n) \in R^n : \text{ if } \epsilon(r)=-1 , \alpha_r \neq 0,\\ \text{ and } (\alpha_1^{\epsilon(1)},\ldots,\alpha_i^{\epsilon(i)},\ldots,\alpha_n^{\epsilon(n)} )\in \mathcal{S}\}
    \end{gather*} is a definable subset of $R^n.$
\end{enumerate**}
\end{definition}

It follows that the collection of definable subsets of $K^n$ form a Boolean algebra of subsets, closed under projections, and moreover forms a structure on $K$ in the sense of \cite[Ch 1, (2.1)]{van2000tame}. 



\begin{lemma}[Basic Properties of definable sets and functions]\label{basic-properties}
\begin{enumerate**}
    \item A pol\-ynomial map $\phi : K^n \rightarrow K^m$ is definable (i.e. its graph is a definable subset of $K^{n+m}$). In particular, zero sets of polynomials with $K$-coefficients are definable subsets of $K^n$. 
    \item For definable functions $f , g : K^n \rightarrow K$, the set $\{\underline{\mathbf{z}} \in K^n : |f(\underline{\mathbf{z}})| \leq |g(\underline{\mathbf{z}})|\}$ is a definable subset of $K^n$. 
    \item For a definable function $f : S \rightarrow K$ on a definable subset $S \subseteq K^m,$ we have that $|f(S)| \subseteq \Gamma$ is a finite union of open intervals and points.
    \item \label{partials} Suppose $f : K^n \rightarrow K$ is a definable function that is given by a convergent power series $f(z_1,\ldots,z_n) = \sum_{i \geq 0}a_i(z_1,\ldots,z_{n-1})z_n^i$ then the functions $a_i : K^{n-1}\rightarrow K$ that send $$(z_1,\ldots,z_{n-1})\mapsto a_i(z_1,\ldots,z_{n-1})$$ are also definable.
\end{enumerate**}
\end{lemma}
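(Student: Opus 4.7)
My plan is to dispatch (1)--(3) as short reductions to the tame-structure axioms, with the bulk of the work going into (4).

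For (1), I would first note that addition and multiplication on $R$ are built into the axioms of a tame structure and that singletons $\{c\}\subseteq R$ are definable (as closed disks of polyradius $0\in\Gamma$). To lift these from $R$ to $K$ I will use \autoref{def:definable-in-C_p}: in homogeneous coordinates $[x:y]$ the operations $[x_0:y_0]+[x_1:y_1]=[x_0y_1+x_1y_0:y_0y_1]$ and $[x_0:y_0]\cdot[x_1:y_1]=[x_0x_1:y_0y_1]$ are polynomial in the $R$-coordinates and hence definable via condition (i) of \autoref{def:definable-in-C_p}; the inversion chart of condition (ii) handles coordinates with $|z|>1$. A general polynomial map is then a composition of these definable maps, and its zero locus is a definable fibre of such a composition.

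For (2), the axioms directly give that $|\cdot|:R\to\Gamma$ is definable, and the same projective/inversion argument used in (1) promotes this to $|\cdot|:K\to\Gamma$; the strict ordering on $\Gamma$ is definable by the remark following the definition of tameness. The desired set is then the existential projection of a Boolean combination of the definable graphs of $f$, $g$, $|\cdot|$, and $\leq$. For (3), $|f(S)|\subseteq\Gamma$ is the projection onto the $\Gamma$-factor of the definable set $\{(s,c,\gamma):(s,c)\in\mathrm{graph}(f),\,\gamma=|c|\}$, so it lies in $\mathfrak{S}_{0,1}$; the tame-structure axiom on $\mathfrak{S}_{0,1}$ then forces it to be a finite union of open intervals and points.

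The substantive step is (4), which I would prove by induction on $i$. The base case $a_0(\underline{z})=f(\underline{z},0)$ is immediate by substituting the definable constant $0$ into the definable graph of $f$. For the inductive step, assuming $a_0,\ldots,a_{i-1}$ are definable, I form
\[
g(\underline{z},z_n)\;:=\;\frac{f(\underline{z},z_n)-\sum_{j<i}a_j(\underline{z})\,z_n^j}{z_n^i},
\]
which is definable on $\{z_n\neq 0\}$ by parts (1) and (2) together with the inductive hypothesis. Since $f$ is a convergent power series in $z_n$, we have $g(\underline{z},z_n)=a_i(\underline{z})+a_{i+1}(\underline{z})z_n+\cdots$, and this tail series extends continuously to $z_n=0$ with value $a_i(\underline{z})$. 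The graph of $a_i$ is then cut out by the first-order formula
\[
b=a_i(\underline{z}) \;\iff\; \forall\,\delta\in\Gamma^\times\;\exists\,\varepsilon\in\Gamma^\times\;\forall\,z_n\;\bigl(0<|z_n|<\varepsilon\Rightarrow|g(\underline{z},z_n)-b|<\delta\bigr).
\]
The main obstacle, and the only point that is not purely formal, is legalising this $\forall$--$\exists$--$\forall$ formula inside the structure: universal quantifiers over $R$ and $\Gamma$ are obtained from the existential projections in the structure definition by complementing before and after, which is permitted since $\mathfrak{S}_{m,n}$ is a Boolean algebra. Once quantifier closure is in place, the formula defines the graph of $a_i$, completing the induction.
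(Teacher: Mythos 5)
Your proposal is correct and is in essence an expanded version of the paper's own (one-line) proof, which simply asserts that all four statements follow from the axioms because first-order combinations of definable sets and functions are definable. The only place where you supply a genuinely non-trivial argument that the paper leaves implicit is part (iv), and your inductive limit formula for $a_i$ --- recovering $a_i(\underline{z})$ as $\lim_{z_n\to 0}\bigl(f(\underline{z},z_n)-\sum_{j<i}a_j(\underline{z})z_n^j\bigr)/z_n^i$, expressed as a first-order $\forall\exists\forall$ statement over $\Gamma^\times$ and a small disk in $R$ --- is a correct and welcome way to make that step precise.
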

\begin{proof}
All of these facts follow from the definition of a tame structure. We note in particular that $+ , \cdot : K^2 \rightarrow K$ and $|\cdot| : K \rightarrow \Gamma$ are definable, and that subsets defined by a first-order formula involving definable sets and definable functions must themselves be definable.  
\end{proof}

\begin{definition}\label{definable-in-varieties}
Let $V/K$ be a finite-type reduced scheme over $K.$ We say that a subset $S \subseteq V(K)$ is definable if there exists a finite affine open cover $V = \cup_i U_i = \cup_i \Spec(A_i)$ and closed embeddings $U_i(K) \xhookrightarrow{\beta_i} K^{n_i}$ (arising from a presentation of $A_i$ as a quotient of $K[t_1,\ldots,t_{n_i}]$) such that for all $i$, $\beta_i(S\cap U_i(K))$ is definable. 
\end{definition}

\begin{remark}
We note that if $S \subseteq V(K),$ is definable, then for \emph{every} finite affine open cover $U_i$ of $V$ and for any choice of presentations $\beta_i : K[t_1,\ldots,t_{n_i}]\twoheadrightarrow \Ok({U_i}),$ we have that $\beta_i(S \cap U_i(K)) \subseteq K^{n_i}$ is definable.
\end{remark}

\subsubsection{Dimension Theory of Tame Structures}\label{sec:dimension-theory-of-tame-structures}
Parallel to the notion of definable dimension in o-minimality, in this section, we shall develop the basic dimension theory in the context of tame structures. In particular, we prove the so-called `Theorem of the Boundary' (\autoref{theorem-of-boundary}), which shall be an important input in the proof of the Definable Chow theorem. 

For this section, we shall retain the Notations and Conventions introduced in the previous section. We note that our field $K$ is assumed to be second-countable. Throughout this section, we fix a tame structure on $(R,\Gamma)$ and definability will be with regards to the fixed structure.

We recall the following definition from \cite[Definition 2.1]{lipshitz2000dimension}:

\begin{definition}\label{definable-dimension}
\begin{enumerate*}
    \item For any subset $X \subseteq K^m,$ we define its dimension, denoted $\dim(X)$ as the largest non-negative integer $d \leq m$ such that there exists a collection of $d$ coordinates $I \subseteq \{1,\ldots,m\}$ (with $|I| =d$) such that if $\mathrm{pr}_I : K^m \rightarrow K^d$ denotes the projection to these coordinates, the image $\mathrm{pr}_I(X)$ of $X$ is a subset of $K^d$ with non-empty interior.
    \item For a subset $X \subseteq K^m$ and a point $x \in X,$ the local dimension of $X$ at $x$, denoted $\dim_x(X)$ is defined by:\begin{gather*}
        \dim_x(X) := \min\{\dim(U\cap X) : U\subseteq K^m \text{ is an open containing }x\}
    \end{gather*}
\end{enumerate*}
\end{definition}

\begin{lemma}\label{complement-lemma}
If $X \subseteq R^m$ is definable, then one of $X$ or its complement $X^c$ contains a non-empty open disk of $R^m$.
\end{lemma}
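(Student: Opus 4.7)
The plan is to induct on $m$, but with a strengthened \emph{localized} claim: for every non-empty open disk $U \subseteq R^m$ and every definable $X \subseteq R^m$, one of $X \cap U$ or $U \setminus X$ contains an open disk sitting inside $U$. Taking $U = R^m$ recovers the statement, and the localization will be essential at a key step of the induction.

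For the base case $m = 1$, a definable subset of $R$ is by definition a finite Boolean combination of disks, and the ultrametric property that any two disks are either disjoint or nested shows that the non-empty atoms of such a Boolean algebra lying in $U$ are either singletons (arising from closed disks of radius zero) or contain an open disk. Hence $X \cap U$ is either a finite set of points --- in which case $U \setminus X$ is cofinite in $U$ and contains an open disk --- or $X \cap U$ itself contains an open disk inside $U$.

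For the induction step $m \geq 2$, write $U = U' \times U''$ with $U' \subseteq R^{m-1}$ and $U'' \subseteq R$ open disks, and let $\pi : R^m \to R^{m-1}$ be the projection onto the first $m-1$ coordinates. Applying the induction hypothesis to $Y := \pi(X \cap U)$ inside $U'$ quickly reduces to the case in which $Y$ contains an open disk $D \subseteq U'$, since otherwise an open disk $D \subseteq U' \setminus Y$ yields $D \times U'' \subseteq U \setminus X$. For $y \in D$, each fiber $X_y \cap U''$ is non-empty and definable in $U''$, hence finite or containing an open disk by the base case; a further application of the induction hypothesis to $E := \{y \in D : X_y \cap U'' \text{ contains an open disk}\}$ inside $D$ supplies an open disk $D_0 \subseteq D$ on which one of these alternatives holds uniformly in $y$.

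To finish in the first sub-case, I would use second countability of $K$ to fix a countable basis $\{V_j\}_j$ of open disks of $U''$ and set $F_j := \{y \in D_0 : V_j \subseteq X_y\}$; these are definable and cover $D_0$. Since $D_0$ is an open subset of the complete-metric (hence Baire) space $R^{m-1}$, Baire category supplies an index $j$ and an open disk $D_1 \subseteq D_0$ on which $F_j$ is dense. Applying the strong induction hypothesis to $F_j$ inside $D_1$ produces either an open disk $D_2 \subseteq F_j \cap D_1$ --- whence $D_2 \times V_j \subseteq X \cap U$, as desired --- or an open disk inside $D_1 \setminus F_j$, which contradicts the density of $F_j$ in $D_1$. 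The other sub-case is handled symmetrically with $G_j := \{y \in D_0 : V_j \cap X_y = \emptyset\}$, producing an open disk inside $U \setminus X$. The main obstacle throughout is the upgrade from Baire's conclusion ``$\overline{F_j}$ has non-empty interior in $D_0$'' to ``$F_j$ itself contains an open disk'': an unlocalized complement lemma would only provide an open disk inside $F_j^c$ somewhere in $R^{m-1}$, possibly outside $D_1$ and conveying no information about $F_j$ there. The localized form of the induction hypothesis is designed precisely to constrain this witness to lie inside $D_1$, where its disjointness from $F_j$ contradicts density.
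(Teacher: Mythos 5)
Your proof is correct, but it runs the induction in the opposite direction from the paper and consequently needs different machinery at the final step. The paper projects onto the \emph{first} coordinate, so the fibers are $(m-1)$-dimensional (where the inductive hypothesis is applied) and the base is $R$ itself; there the countable-cover step $D=\bigcup_i T_i$ lands in $R$, where the tameness axiom gives ``finite or contains a disk'' directly, and a bare cardinality argument (a countable union of finite sets cannot cover the uncountable disk $D$) finishes the proof. You instead project onto the first $m-1$ coordinates, so your fibers are one-dimensional (handled by the tameness axiom) and your countable-cover step $D_0=\bigcup_j F_j$ lands in $R^{m-1}$, where no such structural dichotomy is available; this is exactly why you need Baire category plus the localized strengthening of the inductive claim to upgrade ``$F_j$ is dense in a disk $D_1$'' to ``$F_j$ contains a disk''. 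That localization is genuinely doing work and your justification for it is sound; note that the paper obtains essentially the same localized statement only afterwards (\autoref{interior-closure} and \autoref{countable-interior}), via the observation that a closed disk is definably homeomorphic to $R^m$ by scaling, whereas you build it into the induction from the start. Both arguments rest on the same two external inputs --- second countability of $K$ and completeness (Baire) --- so neither is more economical in hypotheses; the paper's version is slightly shorter because its cardinality argument avoids the density bookkeeping, while yours is more self-contained in that it delivers the localized form of the lemma for free. One small point worth making explicit if you write this up: the definability of your sets $E$ and $F_j$ uses quantification over centers in $R$ and radii in $\Gamma$, which is legitimate precisely because the structure is two-sorted and singletons are radius-zero closed disks; the paper relies on the same convention for its $S_1$, $S_2$ and $T_i$.
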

\begin{proof}
Induct on $m$. For $m = 0,1$ this is clear. Let $m \geq 2$ and suppose $X \subseteq R^{m}$ is definable. Consider the projection to the first coordinate $\mathrm{pr} : R^m \rightarrow R.$ For a point $s \in R,$ and for a set $Y \subseteq R^m,$ we denote by $Y_s \subseteq R^{m-1}$ the set $\mathrm{pr}^{-1}(s)\cap Y = \left(\{s\}\times R^{m-1}\right) \cap Y$ viewed as a subset of $R^{m-1}.$ Consider the following two sets:
\begin{align*}
    S_1 &:= \{s \in R : X_s \text{ contains a non-empty disk of } R^{m-1}\}\\
    S_2 &:= \{s \in R : (X^c)_s \text{ contains a non-empty disk of } R^{m-1}\}.
\end{align*}
$S_1, S_2$ are definable. Also, for every fixed $s \in R,$  $R^{m-1} = X_s \cup (X^c)_s$. Thus, by the inductive hypothesis, for every $s$ one of $X_s$ or $X^c_s$ must contain a non-empty disk of $R^{m-1},$ i.e. $R = S_1 \cup S_2.$ By the $m = 1$ case, one of $S_1$ or $S_2$ contains a non-empty disk. Replacing $X$ by $X^c$ if necessary, we may assume without loss of generality, that $S_1$ contains a non-empty open 1-dimensional disk $D \subseteq S_1 \subseteq R$. Recall that $R$ is assumed to be second countable. Let $\{D_i \subseteq R^{m-1}: i \geq 1\}$ be a countable collection of non-empty open disks in $R^{m-1},$ forming a basis of the metric topology of $R^{m-1}.$ For each $i$ define \[T_i := \{s \in D : X_s \supseteq D_i \}\] We have that $\cup_i T_i = D \subseteq R.$ Since definable subsets of $R$ are Boolean combinations of disks, either $T_i$ is finite or $T_i$ has non-empty interior. $K$ being complete, is uncountable and hence, there is some $i$ such that $T_i$ contains a non-empty open disk of $R.$ Say that $T_1$ contains a non-empty open disk $D' \subseteq T_1.$ Then $D' \times D_1 \subseteq X,$ i.e. $X$ contains an $m$-dimensional disk.
\end{proof}

\begin{corollary}\label{interior-closure}
For a definable set $X \subseteq R^m,$ we have \[\mathrm{int}(X) = \varnothing \iff \mathrm{int}(\mathrm{cl}(X)) = \varnothing.\]
\end{corollary}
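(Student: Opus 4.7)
The first implication is immediate: since $X \subseteq \mathrm{cl}(X)$, one has $\mathrm{int}(X) \subseteq \mathrm{int}(\mathrm{cl}(X))$, so if the right-hand side is empty, so is the left. The content is in the converse, which I will prove by contrapositive.

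Assume $\mathrm{int}(\mathrm{cl}(X)) \neq \varnothing$ and choose an open polydisk $D = \mathbb{D}(a,\underline{r}) \subseteq \mathrm{cl}(X)$. The plan is to apply \autoref{complement-lemma} relative to $D$: because $X$ is dense in $D$, the complement $D \setminus X$ cannot contain a non-empty open polydisk, so the dichotomy of the complement lemma ought to force $X \cap D$ itself to contain an open polydisk, yielding $\mathrm{int}(X) \neq \varnothing$. The key point is that \autoref{complement-lemma} is stated on all of $R^m$, so we must first transport it onto a closed sub-polydisk of $D$.

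To do this I would fix a pseudo-uniformizer $\varpi \in K^\times$, choose $k \geq 1$ large enough that $|\varpi|^k < r_i$ for every $i$, and set $E := \overline{\mathbb{D}}(a,(|\varpi|^k,\ldots,|\varpi|^k)) \subseteq D$. The polynomial map $\phi : R^m \to E$ defined by $\phi(\underline{y}) = a + \varpi^k \underline{y}$ is a definable bijection, and by the ultrametric property it carries open polydisks of $R^m$ bijectively onto open polydisks contained in $E$. Let $Y := \phi^{-1}(X \cap E) \subseteq R^m$; this is definable. Applying \autoref{complement-lemma}, we are in one of two cases. If $Y$ contains a non-empty open polydisk $D_0$, then $\phi(D_0) \subseteq X \cap E \subseteq X$ is a non-empty open polydisk, so $\mathrm{int}(X) \neq \varnothing$. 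If instead $R^m \setminus Y$ contains a non-empty open polydisk $D_0$, then $\phi(D_0) \subseteq E \setminus X$ is a non-empty open subset of $E \subseteq D \subseteq \mathrm{cl}(X)$ disjoint from $X$, contradicting the fact that $X$ is dense in $\mathrm{cl}(X)$. In the first case we have contradicted $\mathrm{int}(X) = \varnothing$; the second case is impossible; hence the contrapositive is established.

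The only mild subtlety is the need to pass to the closed sub-polydisk $E$ rather than work directly with $D$: the complement lemma's dichotomy is formulated for subsets of the whole space $R^m$ (itself a closed polydisk), and scaling by $\varpi^k$ is the natural definable way to rescale it onto an arbitrary closed ball inside $D$. Everything else (definability of $Y$, image/preimage behavior of $\phi$ on open polydisks, and density of $X$ in $D$) is routine.
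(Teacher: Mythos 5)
Your proof is correct and follows essentially the same route as the paper: the paper likewise takes a closed disk $\overline{D}\subseteq\mathrm{cl}(X)$, notes it is definably homeomorphic to $R^m$ by scaling coordinates, and applies \autoref{complement-lemma} there to produce an open disk in $\mathrm{cl}(X)\setminus X$, contradicting density. You have merely made the scaling map and the transport of the dichotomy explicit, which is a harmless elaboration.
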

\begin{proof}
Suppose $\mathrm{int}(X) = \varnothing$, however the closure $\mathrm{cl}(X)$ has non-empty interior. Let $\overline{D} \subseteq \mathrm{cl}(X)$ be a closed disk of $R^m,$ with positive radius. Then $\overline{D}$ is definably homeomorphic to $R^m$ (by scaling the coordinates). So we may apply the above \autoref{complement-lemma} to definable subsets of $\overline{D}$. In particular, since $ X \cap \overline{D},$ has empty interior, by the Lemma $X^c \cap \overline{D}$ contains a non-empty open disk, i.e. $\mathrm{cl}(X)\setminus X$ has non-empty interior, which is impossible.  
\end{proof}

\begin{corollary}\label{countable-interior}
\begin{enumerate}
    \item Suppose $\displaystyle\bigcup_{i = 1}^\infty X_i = R^m$ for a countable collection of definable subsets $X_i.$ Then, there is some $i \geq 1$ such that $\mathrm{int}(X_i) \neq \varnothing.$
    \item For a countable collection $\{X_i\}_{i \geq 1}$ of definable subsets of $R^m,$ we have \[\dim(\cup_i X_i) = \max(\dim(X_i)).\]
\end{enumerate}
\end{corollary}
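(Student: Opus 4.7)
The plan is to prove (a) by a Baire category argument leveraging Corollary \autoref{interior-closure}, and then to derive (b) as a short consequence of (a).

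For part (a), I would argue by contradiction. Assume every $X_i$ has empty interior. By \autoref{interior-closure}, each closure $\mathrm{cl}(X_i)$ also has empty interior, so each $\mathrm{cl}(X_i)$ is a closed nowhere-dense subset of $R^m$. This would exhibit $R^m = \bigcup_i X_i \subseteq \bigcup_i \mathrm{cl}(X_i)$ as a countable union of closed nowhere-dense sets. But $K$ is complete with respect to its non-archimedean absolute value, so $K^m$ with the sup-metric is a complete metric space; moreover $R^m$ is clopen in $K^m$ (the non-archimedean unit polydisk is both open and closed), so $R^m$ is itself a complete metric space. The classical Baire Category Theorem then yields the desired contradiction.

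For part (b), set $d := \max_i \dim(X_i)$. The inequality $\dim(\bigcup_i X_i) \geq d$ is immediate from \autoref{definable-dimension}, since any coordinate projection of $\bigcup_i X_i$ contains the corresponding projection of every individual $X_i$. For the reverse inequality, let $d^{*} := \dim(\bigcup_i X_i)$ and pick a coordinate projection $\pi : K^m \to K^{d^{*}}$ such that $\pi\bigl(\bigcup_i X_i\bigr) = \bigcup_i \pi(X_i)$ has nonempty interior in $K^{d^{*}}$. Choose a nonempty open polydisk $U$ sitting inside this interior. Each $\pi(X_i)$ is definable, and $U$ is definably homeomorphic to $R^{d^{*}}$ via an affine translation and rescaling. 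Applying part (a) to the countable cover $U = \bigcup_i \bigl(U \cap \pi(X_i)\bigr)$ (after transporting through this homeomorphism), some $U \cap \pi(X_i)$ must have nonempty interior, and hence $\pi(X_i)$ has nonempty interior in $K^{d^{*}}$. This forces $\dim(X_i) \geq d^{*}$, whence $d \geq d^{*}$.

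The only real step is (a), and the main obstacle there is simply to recognize the situation as one where the Baire Category Theorem applies; once Corollary \autoref{interior-closure} is available (reducing ``empty interior'' to ``nowhere dense'' for definable sets) and completeness of $K$ is invoked, (a) is essentially automatic, and the deduction of (b) is routine bookkeeping with the definition of dimension.
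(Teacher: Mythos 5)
Your argument is correct and matches the paper's (one-line) proof, which likewise just invokes the Baire Category Theorem together with \autoref{interior-closure}; your write-up supplies exactly the details the paper leaves implicit. One cosmetic fix for part (b): take a \emph{closed} polydisk of positive polyradius inside the interior rather than an open one, since in the non-archimedean setting it is the closed polydisk that is definably homeomorphic to $R^{d^{*}}$ by rescaling (as in the paper's proof of \autoref{interior-closure}); an open disk rescales to the maximal ideal, not to $R$.
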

\begin{proof}
Follows from the Baire Category theorem and \autoref{interior-closure}.
\end{proof}

\begin{corollary}\label{dimension-closure}
For a definable set $X \subseteq R^m,$ we have $\dim(X) = \dim(\mathrm{cl}(X)).$
\end{corollary}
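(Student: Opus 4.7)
The plan is to derive this as a straightforward consequence of \autoref{interior-closure} combined with continuity and definability of coordinate projections.

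First I would establish the easy inequality $\dim(X) \le \dim(\mathrm{cl}(X))$. Since $X \subseteq \mathrm{cl}(X)$, for any index set $I \subseteq \{1,\dots,m\}$ the image $\mathrm{pr}_I(X)$ is contained in $\mathrm{pr}_I(\mathrm{cl}(X))$, so if $\mathrm{pr}_I(X)$ has non-empty interior in $R^{|I|}$ then so does $\mathrm{pr}_I(\mathrm{cl}(X))$. Taking the maximum such $|I|$ on both sides gives the inequality.

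For the reverse, set $d = \dim(\mathrm{cl}(X))$ and pick a coordinate projection $\pi := \mathrm{pr}_I : R^m \to R^d$ with $|I|=d$ such that $\pi(\mathrm{cl}(X))$ has non-empty interior in $R^d$. Since $\pi$ is continuous, $\pi(\mathrm{cl}(X)) \subseteq \mathrm{cl}(\pi(X))$, and hence $\mathrm{cl}(\pi(X)) \subseteq R^d$ has non-empty interior as well. The set $\pi(X)$ is definable, by axiom (d) of \autoref{def:structure-non-archimedean} applied $(m-d)$ times to eliminate the coordinates outside $I$. I would then invoke \autoref{interior-closure} for the definable set $\pi(X) \subseteq R^d$ to conclude that $\mathrm{int}(\pi(X))$ is itself non-empty. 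By the definition of $\dim$, this gives $\dim(X) \ge d$, completing the argument.

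There is essentially no obstacle here; the only subtlety worth spelling out is the careful justification that $\pi(X)$ is definable so that \autoref{interior-closure} applies, and the observation that continuity of $\pi$ (it is a coordinate projection) is what makes $\pi(\mathrm{cl}(X)) \subseteq \mathrm{cl}(\pi(X))$ hold. Everything else is a direct unwinding of \autoref{definable-dimension}.
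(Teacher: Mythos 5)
Your argument is correct and is exactly the intended deduction: the paper's proof is just the one-line ``Follows from \autoref{interior-closure},'' and your write-up spells out that same reduction (the inclusion $\pi(\mathrm{cl}(X)) \subseteq \mathrm{cl}(\pi(X))$ for a coordinate projection, definability of $\pi(X)$, then \autoref{interior-closure}). Nothing further is needed.
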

\begin{proof}
Follows from \autoref{interior-closure}.
\end{proof}

\begin{lemma}\label{injective-definable-dimension}
Let $f : R^m \hookrightarrow R^n$ be an injective definable map. Then \[\dim(f(R^m)) \geq m.\]
\end{lemma}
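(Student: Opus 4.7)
The plan is to prove the lemma by induction on $m$. The key input, used throughout, is the \emph{discreteness dichotomy for definable subsets of $R$}: in a tame structure, $\mathfrak{S}_{1,0}$ consists of Boolean combinations of disks, and since $K$ is complete and non-trivially valued (hence uncountable), an ultrametric analysis of such Boolean combinations shows that each such set is either finite or contains an open disk of positive radius (and is therefore uncountable with non-empty interior). An easy consequence is that a non-empty definable subset of $R^m$ of dimension $0$ is finite. For the base case $m=1$, if every coordinate image $f_i(R) \subseteq R$ had empty interior it would be finite by this dichotomy, forcing $f(R) \subseteq \prod_i f_i(R)$ to be finite, contradicting the injectivity of $f$ from the uncountable set $R$. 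Hence some $f_i(R)$ contains an open disk and $\dim(f(R)) \geq 1$.

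For the inductive step, assume the result for $m-1$. Writing $f = (f_1, \ldots, f_n)$, I slice the domain by its first coordinate: for each $t \in R$ the map $f^{(t)}(y) := f(t, y) : R^{m-1} \to R^n$ is injective and definable, so by induction $\dim(f^{(t)}(R^{m-1})) \geq m-1$, which yields some $(m-1)$-subset $I(t) \subseteq \{1, \ldots, n\}$ for which $\mathrm{pr}_{I(t)}(f^{(t)}(R^{m-1}))$ has non-empty interior in $R^{m-1}$. Two rounds of pigeonhole combined with Baire category (the first over the finite collection of $I$'s, the second over a countable basis of polydiscs in $R^{m-1}$ using second countability, with both applications appealing to \autoref{countable-interior}) produce a fixed $I^* \subseteq \{1, \ldots, n\}$ with $|I^*| = m-1$, an open disk $E_2 \subseteq R$, and an open polydisc $D^* \subseteq R^{m-1}$ such that $D^* \subseteq \mathrm{pr}_{I^*}(f^{(t)}(R^{m-1}))$ for every $t \in E_2$. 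After permuting the target coordinates, I may assume $I^* = \{1, \ldots, m-1\}$.

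The heart of the argument is the following fiber analysis. For each $d = (d_1, \ldots, d_{m-1}) \in D^*$, set
\[
A_d := \{(t, y) \in E_2 \times R^{m-1} : f_i(t, y) = d_i,\ 1 \leq i \leq m-1\}.
\]
By construction $A_d$ has a non-empty slice over each $t \in E_2$, so $|A_d| \geq |E_2|$ is uncountable. On $A_d$ the coordinates $f_1, \ldots, f_{m-1}$ are constant equal to $d$, so injectivity of $f$ forces $(f_m, \ldots, f_n)|_{A_d} : A_d \to R^{n-m+1}$ to be injective, and its image is uncountable and definable. Applying the dichotomy coordinate-wise to this image (an uncountable definable subset of $R^{n-m+1}$ cannot have every coordinate projection finite, and a non-finite definable subset of $R$ has non-empty interior), some $f_j(A_d)$ with $j \in \{m, \ldots, n\}$ has non-empty interior in $R$. (When $n < m$ this step instead produces a direct contradiction, matching the vacuous truth of the lemma in that case.) A third pigeonhole-plus-Baire argument then pins down a single $j^* \in \{m, \ldots, n\}$, an open polydisc $D^{***} \subseteq D^*$, and an open disk $D^+ \subseteq R$ such that $D^+ \subseteq f_{j^*}(A_d)$ for every $d \in D^{***}$.

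Setting $I^+ := \{1, \ldots, m-1, j^*\}$, a subset of $\{1, \ldots, n\}$ of size $m$, this says precisely that $\mathrm{pr}_{I^+}(f(R^m)) \supseteq D^{***} \times D^+$, an open polydisc in $R^m$, so $\dim(f(R^m)) \geq m$, completing the induction. The main obstacle is conceptual: slicing by one coordinate seemingly loses a dimension, and one must show this is recovered in some image coordinate $j^*$ not already in $I^*$. The resolution combines injectivity of $f$ (which prevents fiber collapse) with the rigidity of $\mathfrak{S}_{1,0}$ (through which uncountability of a definable subset of $R^k$ propagates to non-empty interior in at least one coordinate projection), together with three successive Baire-type arguments to upgrade pointwise existence of open disks into uniform inclusion of a single polydisc.
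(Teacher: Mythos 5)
Your proposal is correct and follows essentially the same route as the paper's proof: induction on $m$, the dichotomy that infinite definable subsets of $R$ have interior, slicing the domain along one coordinate, two pigeonhole-plus-Baire passes (over the finitely many index sets and over a countable basis of polydiscs) to fix a uniform $(m-1)$-block of target coordinates, and then using injectivity to show the fiber over each point of that block is infinite so that a further coordinate contributes a disk. The only differences (slicing on the first rather than the last domain coordinate, and phrasing the fiber analysis via the domain-side sets $A_d$ rather than the target-side fibers $X_b$) are cosmetic.
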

\begin{proof}
Induct on $m$. 

If $m = 1,$ then since $f(R^1)$ is infinite, there must necessarily be some coordinate projection $\mathrm{pr} : R^n \rightarrow R$ such that the image $\mathrm{pr}(f(R^1))$ is infinite. By the tameness axiom, infinite definable sets of $R$ contain non-empty disks. So $\dim(f(R)) \geq 1.$

Now suppose $m \geq 2.$ For any $y \in R,$ denote by $L_y$ the $(m-1)$-dimensional line, $L_y := R^{m-1}\times \{y\}.$ By the inductive hypothesis, we have that $\dim(f(L_y))\geq (m-1).$  So there exists a choice of $(m-1)$-coordinates (depending on $y$) of $R^n,$ such that the projection of $f(L_y)$ to those corresponding coordinates has non-empty interior. For each choice of $I = (i(1),\ldots,i(m-1))$ with $1 \leq i(1) < \ldots < i(m-1) \leq n,$ let $\pi_I : R^n \rightarrow R^{m-1}$ denote the corresponding projection. Let $T_I := \{y \in R : \pi_I(f(L_y)) \text{ contains a non-empty disk }\}.$ Then $\cup_I T_I = R.$ Hence, there is a choice of $I$ such that $T_I$ contains a closed disk of positive radius say $D$. Replacing $R^m$ by $R^{m-1}\times D,$ (and rearranging coordinates if necessary) we may assume that for all $y \in R,$ $\pi(f(L_y))$ contains a non-empty open disk of $R^{m-1}$ where $\pi : R^n \rightarrow R^{m-1}$ is the projection to the first $(m-1)$-coordinates. Enumerate a countable basis $\{B_i : i \geq 1\}$ of non-empty open disks of $R^{m-1}.$ Let $\Lambda_i := \{y \in R : \pi(f(L_y)) \supseteq B_i\}.$ Then $\Lambda_i$ is definable in $R$ and $\cup_{i\geq 1} \Lambda_i = R.$ So there is some $i$ such that $\Lambda_i$ contains a closed disk, say $D',$ of positive radius. Again replacing $R^m$ by $R^{m-1} \times D'$ we may assume that for all $y \in R, \pi(f(L_y)) \supseteq B_0$ where $B_0 \subseteq R^{m-1}$ is some fixed non-empty open disk of $R^{m-1}.$ Let $X := f(R^m) \subseteq R^n.$ For every $b \in B_0,$ and for every $y \in R,$ we have that $X_b \cap f(L_y) \neq \varnothing.$ Thus for every $b \in B_0,$ the set $X_b$ is infinite and so some projection of $X_b$ to the remaining $(n-(m-1))$-coordinates must be infinite and hence contains a non-empty open 1-dimensional disk. For each of these remaining coordinates, $j \in \{m,\ldots, n\}$ let $S_j := \{b \in B_0 : \mathrm{pr}_j(X_b) \text{ contains a non-empty open disk}\}.$ Since $\cup_j S_j = B_0,$ by \autoref{countable-interior} some $S_j$ contains a non-empty open disk. Shrinking $B_0$ further to this smaller disk and rearranging the coordinates if necessary, we may assume that for all $b \in B_0, \mathrm{pr}_m(X_b)$ contains a non-empty open disk of $R$. Enumerate the disks of $R,$ i.e. let $\{C_i :i \geq 1\}$ be a countable basis of non-empty open disks of $R.$ Let $\Gamma_i := \{b \in B_0 : \mathrm{pr}_m(X_b)\supseteq C_i\}.$ Then $\Gamma_i$ are definable and $\cup_i \Gamma_i = B_0.$ By \autoref{countable-interior}, we have an $i$ such that $\Gamma_i$ contains a non-empty open disk say $B'\subseteq \Gamma_i.$ Then $\mathrm{pr}_{(1,\ldots,m)}(X) \supseteq B'\times C_i,$ and therefore $\dim(f(R^m)) = \dim(X) \geq m.$
\end{proof}


\begin{lemma}\label{quasi-finite-graph-decomposition}
Let $X \subseteq R^m$ be definable. Let $d \leq m,$ and let $\mathrm{pr}_{(1,\ldots,d)} : R^m \rightarrow R^d$ denote the projection to the first $d$-coordinates. Suppose that $\mathrm{pr}_{(1,\ldots,d)}(X) = B$ is a closed polydisc in $R^d$ of positive polyradius, such that the restriction of the projection to $X$, $\mathrm{pr}_{(1,\ldots,d)} : X \rightarrow R^d$ is a quasi-finite surjection, with all the fibers having the same size of say $N$ elements. Then there exists a smaller closed polydisk of positive polyradius $B' \subseteq B$ and $N$ definable maps $s_i : B' \rightarrow R^{m-d}, 1\leq i \leq N$ such that $X \cap (B' \times R^{m-d})$ is the disjoint union of the graphs of the $s_i, 1\leq i \leq N.$
\end{lemma}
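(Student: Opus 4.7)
The case $N = 1$ is immediate (the projection $X \to B$ is a definable bijection, so its inverse gives one section over $B' = B$), so I would henceforth assume $N \geq 2$. The plan has two main stages: first, shrink $B$ so that the $N$ points in every fibre are uniformly separated in norm; second, invoke the second-countability of $K$ to further shrink $B$ onto a polydisk over which each fibre places exactly one point in each of $N$ fixed small disks from a countable basis.

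For the first stage, I would introduce the definable function $\delta : B \to \Gamma^\times$ given by
\[
\delta(b) := \min\{\lVert x - x'\rVert : x, x' \in X_b,\ x \neq x'\},
\]
which is everywhere positive since $N \geq 2$ forces each fibre to contain at least two distinct points. The definable level sets $B_n := \delta^{-1}\bigl([\,|\varpi|^n, \infty)\bigr)$ ascendingly cover $B$, so \autoref{countable-interior} yields some $B_n$ with non-empty interior; I would replace $B$ by a closed polydisk of positive polyradius inside this interior and set $\gamma_0 := |\varpi|^n$, so that $\delta \geq \gamma_0$ uniformly on the new $B$. For the second stage, fix a countable basis $\{D_k\}_{k \geq 1}$ for the topology on $R^{m-d}$ consisting of closed polydisks whose polyradius vector has every entry at most $\gamma_0 / 2$; this exists by the second-countability of $K$ (take centres in a countable dense subset of $R^{m-d}$ and polyradii of the form $(|\varpi|^\ell, \ldots, |\varpi|^\ell)$ with $|\varpi|^\ell \leq \gamma_0/2$). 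For each unordered $N$-subset $T = \{k_1, \ldots, k_N\}$ of indices for which the $D_{k_i}$ are pairwise disjoint, consider the definable set
\[
B_T := \bigl\{b \in B : X_b \subseteq \bigsqcup_{i=1}^N D_{k_i} \text{ and } |X_b \cap D_{k_i}| = 1 \text{ for each } i\bigr\}.
\]
The key observation is that $B = \bigcup_T B_T$: given $b \in B$ and $x \in X_b$, the basis property supplies some $D_k \ni x$ of polyradius at most $\gamma_0/2$, and the ultrametric inequality combined with $\delta(b) \geq \gamma_0$ forces the disks selected for the $N$ distinct points of $X_b$ to be pairwise disjoint.

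Since the union over $T$ is countable, \autoref{countable-interior} yields some $B_T$ of non-empty interior; I would pick a closed polydisk $B'$ of positive polyradius inside this interior and relabel the relevant disks as $D_1, \ldots, D_N$. The functions $s_i : B' \to R^{m-d}$ sending $b$ to the unique element of $X_b \cap D_i$ are then definable by a first-order formula (``$y$ is the unique element of $X_b \cap D_i$''), and by construction $X \cap (B' \times R^{m-d}) = \bigsqcup_{i=1}^N \mathrm{graph}(s_i)$, as desired. I expect the main subtlety to lie in the second stage: because the residue field of $K$ may be infinite (e.g., for $K = \C_p$), a naive finite partition of $R^{m-d}$ into balls of radius $\gamma_0/2$ is unavailable, and the second-countability hypothesis on $K$ is precisely what allows one to replace such a finite partition by a \emph{countable} cover of $B$ by the pieces $B_T$, on which the Baire-category content encoded in \autoref{countable-interior} can then take over.
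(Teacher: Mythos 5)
Your proof is correct and follows essentially the same route as the paper's: the first stage (uniform separation of the fibre points via the definable sets $\{b : \norm{x_1-x_2}\geq|\varpi^n| \text{ for } x_1\neq x_2\in X_b\}$ and \autoref{countable-interior}) is step-for-step the paper's argument, and the second stage likewise rests on a countable cover by small disks of polyradius below the separation threshold plus another application of \autoref{countable-interior}. The only difference is that the paper inducts on $N$, locating a single small disk $\Delta_j$ over which $(B\times\Delta_j)\cap X\to B$ becomes a bijection and peeling off one graph at a time, whereas you extract all $N$ sections simultaneously from a single definable piece $B_T$ indexed by an $N$-subset of pairwise disjoint basis disks; both versions work.
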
 
\begin{proof}
Induct on $N$. If $N = 1,$ then the projection $\mathrm{pr}_{(1,\ldots,d)} : X \rightarrow B$ is a definable bijection and the Lemma is clear in this case, since $X$ is evidently the graph of the definable inverse of this bijection, composed with the projection to the last $(m-d)$-coordinates. 

Suppose $N \geq 2.$ For every $m \geq 1,$ define \[D_m := \{b \in B : \text{ for all }x_1\neq x_2 \in X_b, \, \norm{x_1 - x_2} \geq |\varpi^m|\}.\] Note that $\cup_{m \geq 1} D_m = R^d,$ and thus by Corollary \ref{countable-interior} for an $m_0, D_{m_0}$ has non-empty interior. Replacing $B$ with a smaller disk in this interior, and shrinking $X$ too, we assume that for all $b \in B,$ and for all $x_1 \neq x_2 \in X_b, \, \norm{x_1-x_2} \geq |\varpi^{m_0}|.$ Now, cover $R^{m-d}$ by countably many non-empty open disks $\{\Delta_j\}_{j \geq 1}$ of polyradius strictly less than $|\varpi^{m_0}|.$ Since $\displaystyle\bigcup_{j \geq 1} \mathrm{pr}_{(1,\ldots,d)}\left((B \times \Delta_j)\cap X\right) = B,$ by Corollary \ref{countable-interior}, for some $j \geq 1, \mathrm{pr}_{(1,\ldots,d)}\left((B \times \Delta_j)\cap X\right)$ has non-empty interior in $R^d.$ We replace $B$ by a smaller closed disk of positive radius contained in this interior. Thus, we now have that $\mathrm{pr}_{(1,\ldots,d)}: (B \times \Delta_j) \cap X \rightarrow B$ is a \emph{bijection} (since distinct points in the fiber of this projection are at least $|\varpi^{m_0}|$ apart in some coordinate, however the polydisc $\Delta_j$ has polyradius $< |\varpi^{m_0}|$ by choice.) Thus the inverse of this bijection, provides a definable section $s:B \hookrightarrow X.$ And letting $s_1 := \mathrm{pr}_{(d+1,\ldots,m)}\circ s,$ we see that the graph of $s_1$ is exactly $(B \times \Delta_j)\cap X.$ Let $Y := X \cap (B \times (R^{m-d}\setminus \Delta_j)).$ Then $\mathrm{pr}_{(1,\ldots,d)} : Y \rightarrow B$ is a quasi-finite surjection with fibers of constant cardinality $N-1.$ We now apply the induction hypothesis to $\mathrm{pr}_{(1,\ldots,d)} : Y \rightarrow B$ to finish the proof of the Lemma.  
\end{proof}

\begin{proposition}[Invariance of dimensions under definable bijections]\label{definable-bijection-dimension}
Let $X \subseteq R^m$ and $Y \subseteq R^n$ be definable sets and $f : X \rightarrow Y$ a definable bijection. Then $\dim(X) = \dim(Y).$
\end{proposition}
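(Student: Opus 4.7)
The plan is to reduce, by applying the same bound to $f^{-1}: Y \to X$, to proving the one-sided inequality $\dim(Y) \geq \dim(X)$. Set $d := \dim(X)$. The strategy is to construct a definable injection $\sigma : B \hookrightarrow X$ from some open polydisk $B \subseteq R^d$; post-composition with $f$ then yields a definable injection $f \circ \sigma : B \hookrightarrow Y \subseteq R^n$, and since $B$ is definably homeomorphic to $R^d$ via coordinate scaling, \autoref{injective-definable-dimension} gives $\dim(Y) \geq d$.

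To produce such a $\sigma$, I would induct on $m$, carrying the stronger claim: for any definable $X \subseteq R^m$ of dimension exactly $d$, there exists (after a coordinate permutation) an open polydisk $B \subseteq R^d$ and a definable map $\tau : B \to R^{m-d}$ whose graph lies inside $X$. The base case $m = d$ is immediate from \autoref{interior-closure}, since some coordinate projection of $X$ has non-empty interior and hence (up to reordering) $X$ itself contains an open polydisk. For the inductive step, arrange that the projection onto the first $d$ coordinates has image with non-empty interior, let $\rho : R^m \to R^{m-1}$ drop the last coordinate, and set $X_1 := \rho(X)$. Since projections do not increase dimension one has $\dim(X_1) = d$, and the inductive hypothesis produces an open polydisk $B_1 \subseteq R^d$ together with a definable graph $\tau_1 : B_1 \to R^{m-1-d}$ inside $X_1$.

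The crux is the next step: promoting $\tau_1$ to a section of $X$ by choosing a value of the last coordinate. For each $b \in B_1$ consider the definable fiber $F_b := \{c \in R : (b, \tau_1(b), c) \in X\} \subseteq R$. For each $N \geq 1$, $S_N := \{b \in B_1 : |F_b| = N\}$ is definable, and using that an infinite definable subset of $R$ must contain a disk of positive radius (a consequence of the tame structure axiom), $S_\infty := \{b \in B_1 : F_b \text{ is infinite}\}$ is also definable via the quantifier $\exists c \in R, \exists r \in \Gamma^\times, \overline{\mathbb{D}}(c;r) \subseteq F_b$. Applying \autoref{countable-interior} to the decomposition $B_1 = S_\infty \cup \bigcup_{N \geq 1} S_N$ shows that one of these pieces has non-empty interior. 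The main obstacle is ruling out $S_\infty$: if $S_\infty$ had non-empty interior, a further Baire-category shrinking (to a fixed open disk $C \subseteq R$ contained in $F_b$ for all $b$ in a smaller polydisk $B''$) would produce a definable injection $B'' \times C \hookrightarrow X$, contradicting $\dim(X) = d$.

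Hence some $S_N$ with $N$ finite has non-empty interior, and \autoref{quasi-finite-graph-decomposition} applied over a closed polydisk inside $S_N$ yields an open polydisk $B \subseteq B_1$ and definable sections $s_1, \ldots, s_N : B \to R$ whose graphs partition the one-dimensional fibers $F_b$ for $b \in B$. Taking $\sigma(b) := (b, \tau_1(b), s_1(b))$ gives the desired definable graph $B \to R^{m-d}$ inside $X$, completing the induction and, combined with the reduction in the first paragraph, the proof.
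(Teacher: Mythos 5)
Your proof is correct and follows essentially the same route as the paper's: both reduce to the one-sided inequality, extract a definable section of $X$ over a $d$-dimensional polydisk by decomposing the base according to fiber cardinality, ruling out infinite fibers with a Baire-category/dimension count, and invoking \autoref{quasi-finite-graph-decomposition}, then conclude via \autoref{injective-definable-dimension} applied to $f$ composed with the section. The only differences are organizational: the paper establishes the quasi-finite, constant-fiber-cardinality locus for the projection $R^m \to R^d$ in one step (with fibers in $R^{m-d}$) rather than peeling off one coordinate at a time by induction on $m$, and your polydisks should be taken \emph{closed} of positive polyradius (as \autoref{quasi-finite-graph-decomposition} in fact provides) so that the scaling identification with $R^d$ needed to apply \autoref{injective-definable-dimension} goes through.
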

\begin{proof}
It suffices to prove that $\dim(X) \leq \dim(Y),$ since the inverse $f^{-1} :Y \rightarrow X$ is also definable. 
Let $d = \dim(X).$ Suppose the projection of $X$ to the first $d$-coordinates contains a $d$-dimensional non-empty open disk $B$, i.e. $\mathrm{pr}_{(1,\ldots,d)}(X) \supseteq B.$ Replacing $X$ by $X \cap (B \times R^{m-d})$ we may assume that $\mathrm{pr}_{(1,\ldots,d)}(X) = B.$

\begin{claim}
\emph{There is a non-empty open disk $B' \subseteq B$ such that the projection map $\mathrm{pr}_{(1,\ldots,d)} :X \rightarrow B$ is quasi-finite over $B'$ with constant fiber cardinality of size $N$.}
\end{claim} 
For each $j \in \{d+1,\ldots,m\},$ let 
\[T_j := \{b \in B : \mathrm{pr}_j(X_b) \text{ contains a non-empty open disk}\}\] and for each natural number $k \geq 1,$ let $F_k :=\{b \in B : |X_b| = k\}.$ Then $B = \displaystyle\bigcup_{k=1}^\infty F_k \cup \displaystyle\bigcup_{d+1\leq j \leq m} T_j.$ If for any $k \geq 1, F_k$ has non-empty interior, the Claim would be proved. So suppose each $F_k$ has empty interior; then by \autoref{countable-interior} there is some $j$ such that $T_j$ has non-empty interior. Suppose w.l.o.g that $T_{d+1}$ contains a non-empty open disk. Replacing $B$ by this smaller disk (and modifying $X$ appropriately), we may assume that $T_{d+1} = B.$ Let $\{B_i : i \geq 1\}$ be an enumeration of a countable basis of non-empty open disks of $R$, and let $\mathcal{K}_i := \{b \in B : \mathrm{pr}_{d+1}(X_b) \supseteq B_i\}.$ Then $\cup_i \mathcal{K}_i = B$ and so by \autoref{countable-interior} there is an $i_o$ such that $\mathcal{K}_{i_0}$ contains a non-empty open disk $D.$ Replacing $B$ by $D$ we assume that for all $b \in B, \mathrm{pr}_{d+1}(X_b) \supseteq B_{i_0}.$ But then $\mathrm{pr}_{(1,\ldots,d+1)}(X)\supseteq B\times B_{i_0}$ contradicting that $\dim(X) = d,$ and thus proving the claim.

Replacing $B$ with $B'$ obtained from the above claim, and replacing $X$ by $X \cap (B' \times R^{m-d})$ we assume $\mathrm{pr}_{(1,\ldots,d)}: X\rightarrow B$ is quasi-finite with constant fiber cardinality of size $N \geq 1$. By \autoref{quasi-finite-graph-decomposition}, (after possibly shrinking $B$ further) we can find a definable section $s:B \hookrightarrow X.$ By \autoref{injective-definable-dimension} now, $\dim(f(s(B)))\geq d$ and as $Y \supseteq f(s(B))$ we get that $\dim(Y) \geq d$ as was needed.
\end{proof}

\begin{lemma}\label{uniform-continuity-over-small-disk}
Let $D \subseteq R^d$ be a closed polydisk
of positive polyradius. Let $s : D \rightarrow R$ be a definable function. Then given any $\epsilon > 0,$ there exists a smaller closed polydisk $D' \subseteq D$ or positive polyradius such that $s(D')$ is contained in a disk of diameter $< \epsilon,$ i.e. for all $x, y \in D', |s(x) - s(y)| < \epsilon.$  
\end{lemma}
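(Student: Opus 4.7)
The plan is to use the Baire-category-style reasoning already encapsulated in Corollary \ref{countable-interior}, applied to a countable cover of the target $R$ by small disks. First, using the second countability hypothesis on $K$ (and hence on $R$), choose a countable collection of open disks $\{B_i\}_{i \geq 1}$ of $R$, each of diameter less than $\epsilon$, which covers $R$. Pull these back via $s$ to obtain the definable sets $E_i := s^{-1}(B_i) \subseteq D$; they are definable because $s$ is a definable function and each $B_i$ is a disk, hence definable in our tame structure.

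Since $s$ is defined on all of $D$, we have $\bigcup_{i \geq 1} E_i = D$. The polydisk $D$, being a closed polydisk of positive polyradius, is definably homeomorphic to $R^d$ by translating the center to the origin and rescaling by an appropriate power of the pseudo-uniformizer $\varpi$. Therefore Corollary \ref{countable-interior} applies (transported along this definable homeomorphism): one of the $E_i$, say $E_{i_0}$, must have non-empty interior in $D$. In particular, $E_{i_0}$ contains a non-empty open polydisk of $R^d$, and hence contains a closed polydisk $D' \subseteq D$ of positive polyradius.

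By construction, for every $x \in D'$ we have $s(x) \in B_{i_0}$, and $B_{i_0}$ has diameter less than $\epsilon$. Consequently, for all $x, y \in D'$, $|s(x) - s(y)| < \epsilon$, as required. There is no real obstacle here: the only subtle point is recognizing that the appropriate countable cover plus Baire category (packaged as Corollary \ref{countable-interior}) is exactly what is needed, and that a definable set with non-empty interior automatically contains a closed polydisk of positive polyradius — which follows from the description of the topology of $R^d$ by polydisks.
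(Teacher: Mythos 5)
Your proof is correct and is essentially the paper's own argument: cover $R$ by countably many disks of diameter $<\epsilon$, pull back along $s$, and invoke Corollary \ref{countable-interior} to find a preimage with non-empty interior. Your extra remark that $D$ is definably homeomorphic to $R^d$ (so that the corollary, stated for $R^m$, applies) is a small point the paper leaves implicit, but the route is the same.
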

\begin{proof}
Cover $R$ by countably many non-empty open disks $\{B_i\}_{i \geq 1}$ each of diameter $< \epsilon.$ Then, $D = \cup_i s^{-1}(B_i).$ By \autoref{countable-interior}, there is an $i \geq 1,$ such that $s^{-1}(B_i)$ has non-empty interior in $R^d.$ For such an $i$, take $D'\subseteq s^{-1}(B_i)$ to be a closed polydisk of positive polyradius.
\end{proof}

\begin{theorem}[Theorem of the Boundary]\label{theorem-of-boundary}
Let $X \subseteq R^m$ be a definable set. Then $\dim(\mathrm{Fr}(X)) < \dim(X).$
\end{theorem}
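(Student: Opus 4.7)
The plan is to argue by contradiction: set $d := \dim(X)$ and suppose $\dim(\mathrm{Fr}(X)) \geq d$. By \autoref{dimension-closure} this forces $\dim(\mathrm{Fr}(X)) = d$, so after relabelling coordinates the projection $\pi := \mathrm{pr}_{(1,\ldots,d)}$ satisfies $\pi(\mathrm{Fr}(X)) \supseteq B$ for some open polydisk $B \subseteq R^d$. Continuity of $\pi$ yields $\pi(\mathrm{Fr}(X)) \subseteq \mathrm{cl}(\pi(X))$, so $U := \pi(X) \cap B$ has $\mathrm{cl}(U) \supseteq B$, and \autoref{interior-closure} then produces an open polydisk $B_0 \subseteq \pi(X) \cap \pi(\mathrm{Fr}(X))$. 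Applying the Claim inside the proof of \autoref{definable-bijection-dimension} followed by \autoref{quasi-finite-graph-decomposition}, first to $X$ and then to $\mathrm{Fr}(X)$, each restricted over $B_0 \times R^{m-d}$, after finitely many shrinkings one obtains a sub-polydisk $B_* \subseteq B_0$ together with definable sections $s_1^X,\ldots,s_{N_X}^X$ and $s_1^Y,\ldots,s_{N_Y}^Y : B_* \to R^{m-d}$ (with $N_X, N_Y \geq 1$) whose graphs partition $X \cap (B_* \times R^{m-d})$ and $\mathrm{Fr}(X) \cap (B_* \times R^{m-d})$ respectively.

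The crux is then to pass to a further sub-polydisk on which every section is continuous. Working componentwise, for a definable $s : B_* \to R$ the discontinuity locus $D_s \subseteq B_*$ is visibly definable by a first-order formula. I would show $\mathrm{int}(D_s) = \varnothing$ by contradiction: if $D_s$ contained an open polydisk, after replacing $B_*$ by that polydisk one may stratify $D_s = \bigcup_{n \geq 1} D_s^{(n)}$ by the size of the jump (so $D_s^{(n)}$ consists of those $b$ admitting arbitrarily close $b' \in B_*$ with $|s(b)-s(b')| \geq |\varpi|^n$), use \autoref{countable-interior} to reduce to $D_s^{(n_0)} = B_*$ for some $n_0$, and then apply \autoref{uniform-continuity-over-small-disk} with $\epsilon < |\varpi|^{n_0}$ to produce a sub-polydisk on which $s$ has image of diameter $< |\varpi|^{n_0}$, directly contradicting the uniform jump condition. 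Hence $\dim D_s < d$, and \autoref{complement-lemma} applied to the finite union of all the componentwise discontinuity loci inside $B_*$ furnishes a polydisk $B' \subseteq B_*$ on which every section $s_i^X, s_j^Y$ is continuous.

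The contradiction is then immediate: $B'$ is clopen in $R^d$ by ultrametricity, so each graph of $s_i^X\vert_{B'}$ is closed in $R^m$, and hence $X \cap (B' \times R^{m-d})$ is closed in $R^m$. Pick any $p = (b, s_j^Y(b))$ with $b \in B'$; then $p \in \mathrm{cl}(X)$, and any sequence in $X$ converging to $p$ must eventually enter the open neighbourhood $B' \times R^{m-d}$ and hence lie in the closed set $X \cap (B' \times R^{m-d})$, forcing $p \in X$ and contradicting $p \in \mathrm{Fr}(X) = \mathrm{cl}(X) \setminus X$. I expect the continuity step---definability of $D_s$ together with the Baire-type argument ruling out $\mathrm{int}(D_s) \neq \varnothing$ via \autoref{uniform-continuity-over-small-disk}---to be the main technical obstacle; the remainder is a careful chaining of the dimension-theoretic lemmas already in this section.
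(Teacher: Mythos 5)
Your proposal is correct, and while the opening reductions coincide with the paper's, the endgame is genuinely different. Like the paper, you use \autoref{dimension-closure} and \autoref{interior-closure} to find a common polydisk inside $\pi(X)\cap\pi(\mathrm{Fr}(X))$, and then the Claim from \autoref{definable-bijection-dimension} together with \autoref{quasi-finite-graph-decomposition} to write both $X$ and $\mathrm{Fr}(X)$ over a sub-polydisk as finite disjoint unions of graphs of definable sections. From there the paper reduces $X$ to a \emph{single} graph $s$, stratifies the nowhere-vanishing distances $\norm{s(b)-g_j(b)}$ to the frontier sections by powers of $\varpi$ to extract a uniform lower bound $|\varpi^{m_0}|$, shrinks so that $s$ has oscillation $<|\varpi^{m_0}|$ via \autoref{uniform-continuity-over-small-disk}, and contradicts the ultrametric inequality. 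You instead prove the stronger intermediate fact that a definable section is genuinely \emph{continuous} on a sub-polydisk: your argument for this (the discontinuity locus is definable since quantification over both $R$ and $\Gamma$ is permitted; stratify it by jump size $|\varpi|^n$; apply \autoref{countable-interior}; contradict \autoref{uniform-continuity-over-small-disk} with $\epsilon<|\varpi|^{n_0}$, noting that the witnessing $b'$ may be taken inside the final sub-polydisk since it is open) is sound. The conclusion is then purely topological: a finite union of graphs of continuous functions over a clopen polydisk is closed, so the frontier over that polydisk is empty, contradicting $\pi(\mathrm{Fr}(X))\supseteq B'$. Two small remarks: the decomposition of $\mathrm{Fr}(X)$ into graphs and the continuity of the $s_j^Y$ are not actually needed at the end --- surjectivity of $\pi$ from $\mathrm{Fr}(X)$ onto $B'$ already supplies the point $p$; and when passing to a sub-polydisk avoiding the finitely many discontinuity loci, one should note (via \autoref{interior-closure} and Baire) that a finite union of definable sets with empty interior still has empty interior before invoking \autoref{complement-lemma}. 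Your route costs one extra lemma --- generic continuity of definable functions, a useful fact in its own right that parallels results of Haskell--Macpherson for $C$-minimal structures --- but buys a cleaner topological contradiction in place of the paper's explicit ultrametric estimate.
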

\begin{proof}
Let $d = \dim(X).$ By \autoref{dimension-closure}, we first note that $\dim(\mathrm{Fr}(X))\leq \dim(\mathrm{cl}(X)) = \dim(X)=d.$ Suppose for the sake of contradiction, that $\dim(\mathrm{Fr}(X)) = d,$ and that the projection of $\mathrm{Fr}(X)$ to the first $d$-coordinates has non-empty interior. Thus if $\pi : R^m \rightarrow R^d,$ denotes the projection to the first $d$-coordinates, there exists a closed polydisk $D$ of positive polyradius in $R^d,$ such that $\pi(\mathrm{Fr}(X))\supseteq D.$ 

So we have $D \subseteq \pi(\mathrm{Fr}(X))\subseteq \pi(\mathrm{cl}(X)) \subseteq \mathrm{cl}(\pi(X)),$ and hence in particular $\mathrm{cl}_D (\pi(X)\cap D) = D.$ By \autoref{dimension-closure}, $\pi(X)\cap D$ contains a smaller closed disk of positive radius say $D'\subseteq \pi(X)\cap D \subseteq D.$ Replacing $D$ with $D'$ and $X$ with $X \cap (D' \times R^{m-d}),$ we may assume that $D = \pi(X) = \pi(\mathrm{Fr}(X)).$ 

We note that in the argument that follows, we shall often replace $D$ with a smaller disk. This is justified since, if $D' \subseteq D$ is a smaller closed disk  of positive radius, replacing $D$ by $D'$  and $X$ by $X \cap (D'\times R^{m-d})$ does not change the property that $D = \pi(X) = \pi(\mathrm{Fr}(X)).$ 

Continuing our proof, for each $j \in \{d+1,\ldots,m\}$ we let \[\Lambda_j := \{b \in D : \pi_j(X_b) \text{ has non-empty interior}\}.\] If for some $j$, $\Lambda_j$ has non-empty interior, then again using the same trick of enumerating a countable basis of disks in $R$, and following the same line of argument we would conclude that $\pi_{(1,\ldots, d,j)}(X)$ contains a $(d+1)$-dimensional disk. This is not possible since $\dim(X) = d.$ Therefore, for each $j \in \{d+1,\ldots, m\}$ we must have that $\mathrm{int}(\Lambda_j) = \varnothing.$ Therefore, by \autoref{countable-interior}, $D\setminus \cup_j \Lambda_j$ contains a closed disk of positive polyradius. Replacing $D$ with this smaller closed disk, we may assume then that $\pi : X \rightarrow D$ is a quasi-finite surjection. Moreover, using the argument that we made in the Proof of the claim in the proof of \autoref{definable-bijection-dimension}, we may assume the fibers of $\pi : X \rightarrow D$ have constant finite cardinality of size $N$. Running the exact same argument for $\mathrm{Fr}(X)$ instead of $X$ and shrinking $D$ if necessary, we also assume that $\pi : \mathrm{Fr}(X)\rightarrow D$ is quasi-finite surjection with fibers of constant size say $M$. 

Further shrinking $D$ to a smaller closed disk, we may assume by \autoref{quasi-finite-graph-decomposition}, that $X$ is the disjoint union of graphs of $N$ definable functions $s_i : D \rightarrow R^{m-d}.$ If we let $T_i$ denote the graph of $s_i$, then since $X = \cup_{1\leq i \leq N} T_i$, we have  $\mathrm{Fr}(X)\subseteq \cup_i \mathrm{Fr}(T_i).$ Furthermore, since $D = \pi(\mathrm{Fr}(X)) = \cup_i \pi(\mathrm{Fr}(T_i)),$ for some $i$ we must have that $\pi(\mathrm{Fr}(T_i))$ has non-empty interior. We may then replace $D$ by a smaller disk in this interior, and $X$ by $T_i$ and assume thus that $X$ is the graph of a definable function $s : D \rightarrow R^{m-d}$. Furthermore, running the argument in the above paragraphs again, we may ensure that the property that $\pi : \mathrm{Fr}(X)\rightarrow D$ is a quasi-finite surjection of constant fiber cardinality still holds. In all, we have therefore reduced to the following situation:

\emph{$X$ is the graph of a definable function $s : D \rightarrow R^{m-d}$ such that $\pi(\mathrm{Fr}(X)) = D = \pi(X),$ and such that $\pi : \mathrm{Fr}(X) \rightarrow D$ is a quasi-finite surjection with constant fiber cardinality of size $M.$}

Applying \autoref{quasi-finite-graph-decomposition} to $\mathrm{Fr}(X),$ we assume that $\mathrm{Fr}(X)$ is the disjoint union of graphs of $M$ definable functions $g_j : D \rightarrow R^{m-d}, 1\leq j \leq M.$ Let $Y_j$ denote the graph of $g_j,$ so that $\mathrm{Fr}(X) = \cup_{j=1}^M Y_j.$ We note that $X \cap Y_j = \varnothing$ for each $j,$ or in other words for each $j$ and for every $b \in D, \norm{s(b) - g_j(b)} \neq 0.$ For every $m \geq 1,$ define $E_m := \{b \in D : \norm{s(b) - g_j(b)} > |\varpi^m| \text{ for each } j\}.$ We have $\cup_{m \geq 1} E_m = D,$ and thus by \autoref{countable-interior} some $E_m$ has non-empty interior. Replacing $D$ by a smaller closed disk contained in this interior, we may assume that there is some $m_0$ large enough, such that for all $1 \leq j \leq M$ and for all $b \in D, \norm{s(b)-g_j(b)} > |\varpi^{m_0}|.$ Applying \autoref{uniform-continuity-over-small-disk}, and shrinking $D$ to a smaller disk, we may assume that for all $x, y \in D, \norm{s(x)-s(y)} < |\varpi^{m_0 }|.$ 

Now choose a $b \in D,$ and consider $y = (b,g_1(b)) \in Y_1.$ Since $y \in \mathrm{Fr}(X)$, in particular $y$ is in the closure of $X,$ and thus there must exist some $b' \in D$ such that $\norm{s(b')-g_1(b)} < |\varpi^{m_0}|.$ However, by our choice of $m_0,$ $\norm{s(b)-g_1(b)} > |\varpi^{m_0}|.$ By the non-archimedean triangle inequality, we therefore get that $\norm{s(b')-s(b)} > |\varpi^{m_0}|,$ contradicting the conclusion of the previous paragraph. 
\end{proof}

\begin{remark}\label{second-countability-justification}
Note that in all our proofs we have extensively made use of the standing assumption that $K$ is second countable. However, when the tame structure under consideration is that of the $\mathcal{H}$-subanalytic sets this assumption can be removed, exploiting the model completeness and \emph{uniform} quantifier elimination results of \cite{lipshitz2000model}. See for example the argument used in the Proof of \cite[Lemma 2.3]{lipshitz2000dimension}. Running the same argument given there, with appropriate modifications, enables us to reduce the proof of the Theorem of the Boundary for $\mathcal{H}$-subanalytic sets to the case where $K$ is second countable.
\end{remark}

\subsection{Miscellaneous lemmas}\label{sec:recollections-on-dim-theory-of-rigid-varieties}


In this subsection we collect a few auxiliary results relating to the dimension theory of rigid analytic spaces. These results shall be used in the following sections. We prove in \autoref{rigid-dimension-equals-definable-dimension} that the usual notion of dimension in rigid geometry defined via Krull dimensions of associated rings of analytic functions agrees with the notion of definable dimension defined above via coordinate projections. In \autoref{local-ring-is-equidimensional}, we show that for any point $x$ of  a reduced, equidimensional rigid variety $X$, every minimal prime ideal of the local ring $\mathcal{O}_{X,x}$ has the same coheight. This result is used in the sequel in the course of proving the definability of the \'etale locus of a certain finite map. While the results of this section should be fairly standard, we provide their proofs for completeness. 

\begin{definition}
If $X$ is a rigid variety over $K$, we define its dimension, denoted $\dim(X)$, by \[\dim(X) := \max_{x\in X}\{\dim(\mathcal{O}_{X,x})\}.\] 
\end{definition}
\begin{lemma}\label{affinoid's-dimension} Let $Y = \Sp(A)$ be a $K$-affinoid space. Let $\{Y_i\}_{1\leq i \leq m}$ denote the finitely many irreducible components of $Y$. Then,
\begin{enumerate}
    \item $\dim(Y) = \dim(A)$ and 
    \item For any point $y \in Y$, $\dim(\mathcal{O}_{Y,y}) = \max\{\dim(Y_j) : y \in Y_j\}.$
\end{enumerate}
\end{lemma}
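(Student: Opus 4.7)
The plan is to reduce both statements to standard facts about affinoid algebras, namely that such algebras are Noetherian Jacobson rings for which the rigid analytic stalks compare well with algebraic localizations, together with the ``equidimensionality at maximal ideals'' property of integral affinoid algebras.

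First I would recall that for $y\in Y$ corresponding to the maximal ideal $\mathfrak{m}_y\subseteq A$, the rigid analytic stalk $\mathcal{O}_{Y,y}$ is faithfully flat and Noetherian over the algebraic localization $A_{\mathfrak{m}_y}$, and in particular
\[
\dim(\mathcal{O}_{Y,y})=\dim(A_{\mathfrak{m}_y})
\]
(this is standard, see e.g.\ Bosch--G\"untzer--Remmert, Ch.~7). Thus throughout the argument I may replace $\mathcal{O}_{Y,y}$ by $A_{\mathfrak{m}_y}$.

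For part (1), the affinoid algebra $A$ is a Noetherian Jacobson ring, so every prime ideal is an intersection of maximal ideals and
\[
\dim(A)=\sup_{\mathfrak{m}\in\mathrm{Max}(A)}\dim(A_{\mathfrak{m}}).
\]
Combining this with the stalk identification above gives $\dim(Y)=\dim(A)$.

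For part (2), let $\mathfrak{p}_1,\ldots,\mathfrak{p}_m$ be the minimal primes of $A$, so that $Y_i=\Sp(A/\mathfrak{p}_i)$. The minimal primes of $A_{\mathfrak{m}_y}$ are exactly the $\mathfrak{p}_i A_{\mathfrak{m}_y}$ for which $\mathfrak{p}_i\subseteq \mathfrak{m}_y$, i.e.\ for those $i$ with $y\in Y_i$; hence
\[
\dim(A_{\mathfrak{m}_y})=\max_{i:\, y\in Y_i}\dim\!\bigl((A/\mathfrak{p}_i)_{\mathfrak{m}_y}\bigr).
\]
It therefore suffices to show that for each integral affinoid algebra $B=A/\mathfrak{p}_i$ and each maximal ideal $\mathfrak{m}$ of $B$ one has $\dim(B_{\mathfrak{m}})=\dim(B)$. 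This is the heart of the argument and the step I expect to be the main obstacle. One chooses a Noether normalization $T_d\hookrightarrow B$, a finite injective morphism from a Tate algebra with $d=\dim(B)$. The Tate algebra $T_d$ is a regular Jacobson domain in which every maximal ideal has height exactly $d$ (its algebraic localizations are regular of dimension $d$). Since $B$ is module-finite over $T_d$, going-up and the dimension formula for finite extensions of integral domains give $\dim(B_{\mathfrak{m}})=\dim((T_d)_{\mathfrak{n}})=d$, where $\mathfrak{n}=\mathfrak{m}\cap T_d$. Combined with the previous display, this yields
\[
\dim(\mathcal{O}_{Y,y})=\dim(A_{\mathfrak{m}_y})=\max_{i:\, y\in Y_i}\dim(Y_i),
\]
as required. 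Taking the supremum over $y$ in part (1) also follows: it equals $\max_i\dim(Y_i)=\dim(A)$, consistent with the equidimensionality-at-components picture.
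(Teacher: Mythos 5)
Your proposal is correct and follows essentially the same route as the paper: identify $\dim(\mathcal{O}_{Y,y})$ with $\dim(A_{\mathfrak{m}_y})$ via the common completion, take the max over maximal ideals for (1), and for (2) pass to the minimal primes of $A_{\mathfrak{m}_y}$ and reduce to the fact that every maximal ideal of an integral affinoid algebra has height equal to its dimension, proved via Noether normalization and the Cohen--Seidenberg theorems (the paper isolates this last step as a separate lemma, citing going-down, which is the half you actually need for the lower bound on the height). No substantive differences.
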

\begin{proof}
These facts are rather standard. Due to lack of an explicit reference, we provide a proof nevertheless.

\emph{Proof of (1):}
For a point $y \in Y,$ corresponding to $\mathfrak{m} \in \mathrm{MaxSpec}(A),$ we have $\widehat{A}_\mathfrak{m} = \widehat{\mathcal{O}}_{Y,y}$ \cite[\S 7.3.2 Proposition 3]{bosch1984non}. Since the Krull dimension of a Noetherian local ring is preserved under completion  (see \cite[\href{https://stacks.math.columbia.edu/tag/07NV}{Tag 07NV}]{stacks-project}) we get, \begin{align*}
    \dim(A) &= \max_{\mathfrak{m}\in \mathrm{MaxSpec}(A)}\{\dim(A_\mathfrak{m})\} = \max_{\mathfrak{m}\in \mathrm{MaxSpec}(A)}\{\dim(\widehat{A}_\mathfrak{m})\} \\
    &= \max_{y \in Y}\{\dim(\widehat{\mathcal{O}}_{Y,y})\}= \max_{y \in Y}\{\dim({\mathcal{O}}_{Y,y})\} = \dim(Y).
\end{align*}

\emph{Proof of (2):} From the argument above, if $\mathfrak{m}\in \mathrm{MaxSpec}(A),$ corresponds to $y,$ we know that $\dim(\mathcal{O}_{Y,y}) = \dim(A_\mathfrak{m}).$ If the irreducible component $Y_i$ corresponds to the minimal prime $\mathfrak{p}_i \subset A,$ then we note that $\dim(A_\mathfrak{m}) = \max\{\dim(A_\mathfrak{m}/\mathfrak{p}_jA_\mathfrak{m}) : \mathfrak{p}_j \subseteq \mathfrak{m}\} = \max\{\dim\left((A/\mathfrak{p}_j)_\mathfrak{m}\right) : \mathfrak{p}_j \subseteq \mathfrak{m}\}.$ Now, $A/\mathfrak{p}_j$ is an affinoid algebra that is an integral domain, and this implies that $\dim(A/\mathfrak{p}_j)_\mathfrak{m} = \dim(A/\mathfrak{p}_j)$ - see \autoref{integral-rigid-spaces-are-equidimensional}-1. below. Thus, $\dim(\mathcal{O}_{Y,y})=\dim(A_\mathfrak{m}) = \max\{\dim (A/\mathfrak{p}_j) : \mathfrak{p}_j \subseteq \mathfrak{m}\} = \max\{\dim(Y_j) : y \in Y_j\}.$
\end{proof}

\begin{lemma}\label{rigid-dimension-equals-definable-dimension} Suppose $Y = \Sp(A)$ is a $K$-affinoid space. Suppose $\pi : T_n(K) \twoheadrightarrow A$ is a surjective homomorphism of $K$-algebras. Via $\pi$ we may view $i : Y \hookrightarrow R^n$ as a subset of the $n$-dimensional unit ball $R^n.$ Then,
\begin{enumerate}
    \item  The dimension of $i(Y)$ as a subset of $R^n$ (in the sense of \autoref{definable-dimension}) is the same as the dimension of $Y$ as a rigid analytic space.
    \item For a point $y \in Y,$ the local dimension $\dim_{i(y)}i(Y)$ (in the sense of \autoref{definable-dimension}) is equal to $\dim(\mathcal{O}_{Y,y}).$ 
    \item Suppose $X$ is a rigid space over $K$ and $i : X\hookrightarrow
    \mathbb{A}^{n,\mathrm{an}}_{K}$ a closed immersion. Then $\dim(X)$ equals the dimension of $i(X)$ viewed as a subset of $K^n$ (as in \autoref{definable-dimension}). For a point $x \in X,$ the local dimension $\dim_{i(x)}i(X)$ (as in \autoref{definable-dimension}) is equal to $\dim(\mathcal{O}_{X,x}).$ 
\end{enumerate}
\end{lemma}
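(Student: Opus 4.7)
The plan is to handle (1) and (2) by decomposing into irreducible components and proving matching upper and lower bounds on the definable dimension for an integral affinoid, then to obtain (3) by an exhaustion argument. First decompose $Y$ into its finitely many irreducible components $Y_j = \Sp(A/\mathfrak{p}_j)$. Both sides of the desired equality in (1) split across components: $\dim(Y) = \max_j \dim(Y_j)$ by \autoref{affinoid's-dimension}, and $\dim(i(Y)) = \max_j \dim(i(Y_j))$ by finite additivity of definable dimension (a special case of \autoref{countable-interior}). So one is reduced to showing $\dim(i(Y')) = d'$ for an integral affinoid $Y' = \Sp(A')$ of dimension $d'$.

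For the upper bound, use that $A'$ is an integral affinoid domain of Krull dimension $d'$, so any Noether normalization $T_{d'}(K) \hookrightarrow A'$ realizes $\mathrm{Frac}(A')$ as a finite extension of $\mathrm{Frac}(T_{d'}(K))$, giving $\mathrm{tr.deg}_K \mathrm{Frac}(A') = d'$. Consequently, any $d'+1$ of the coordinate functions $\bar{x}_{i_1},\ldots,\bar{x}_{i_{d'+1}}$ viewed as elements of $A'$ satisfy a nontrivial polynomial relation over $K$, so the corresponding coordinate projection $\pi_I(i(Y')) \subseteq K^{d'+1}$ is contained in the zero locus $V(P)$ of some nonzero $P \in K[T_1,\ldots,T_{d'+1}]$. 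An elementary induction on the number of variables (using that any polydisk in $K^{d'+1}$ contains a product of infinite subsets of the infinite field $K$) shows $V(P)$ has empty interior, so $\dim(i(Y')) \leq d'$.

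For the lower bound, invoke the explicit form of Noether normalization for Tate algebras (e.g.\ Bosch--G\"untzer--Remmert, \S6.1.2): there exists a $K$-algebra automorphism $\sigma$ of $T_n(K)$, of the explicit form $T_i \mapsto T_i + T_n^{c_i}$, such that the composite $T_{d'}(K) \hookrightarrow T_n(K) \xrightarrow{\sigma} T_n(K) \twoheadrightarrow A'$ is a finite injection. The automorphism $\sigma$ induces a definable bijection $\sigma^* : R^n \to R^n$, so by invariance of dimension under definable bijections (\autoref{definable-bijection-dimension}), one has $\dim(i(Y')) = \dim(\sigma^*(i(Y')))$. The projection of $\sigma^*(i(Y'))$ to the first $d'$ coordinates is the image on $K$-points of the finite surjection $Y' \twoheadrightarrow \B^{d'}$, which (assuming $K$ is algebraically closed) is all of $R^{d'}$ by Lying-Over applied to $T_{d'}(K) \hookrightarrow A'$. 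Since $R^{d'}$ has nonempty interior in $K^{d'}$, this gives $\dim(i(Y')) \geq d'$. I expect this half of the argument to be the main obstacle, as it requires both the right form of Noether normalization and the nontrivial \autoref{definable-bijection-dimension}.

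For (2), given $y \in Y$, pick $r \in |K^\times|$ small enough that $\Dbar(i(y); r)$ meets only those $Y_j$ with $y \in Y_j$; for each such $j$, $Y_j \cap \Dbar(i(y); r)$ is a nonempty affinoid subdomain of the integral affinoid $Y_j$ and hence retains dimension $\dim(Y_j)$ by the equidimensionality of integral affinoids already used in the proof of \autoref{affinoid's-dimension}. Applying (1) to each of these and taking the maximum over $j$ yields $\dim_{i(y)}(i(Y)) = \max\{\dim(Y_j) : y \in Y_j\} = \dim(\Ok_{Y,y})$ by \autoref{affinoid's-dimension}(2). For (3), cover $\mathbb{A}^{n,\mathrm{an}}_K$ by an increasing sequence of closed polydisks $\Dbar(0; r_k)$ with $r_k \in |K^\times|$, $r_k \to \infty$; each $X_k := i^{-1}(\Dbar(0; r_k))$ is a closed affinoid subspace of $X$, and rescaling $\Dbar(0; r_k)$ to the unit polydisk by a definable bijection of $K^n$ lets one apply (1) and (2) to $X_k$. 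The global and local statements for $X$ then follow from $\dim(X) = \max_k \dim(X_k)$ and $\dim(i(X)) = \max_k \dim(i(X_k))$ (\autoref{countable-interior}), together with the observation that any given $x \in X$ lies in $X_k$ for all large enough $k$.
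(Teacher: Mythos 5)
Your lower bound for (1), your part (2), and your part (3) are sound, and in fact your lower-bound route --- explicit Noether normalization via automorphisms $T_i \mapsto T_i + T_n^{c_i}$, transferred through \autoref{definable-bijection-dimension}, plus Lying-Over for the finite injection $T_{d'}(K)\hookrightarrow A'$ --- is essentially the alternative argument the paper itself sketches for (1); the paper's primary move there is simply to quote \cite[Lemma 4.2]{lipshitz2000dimension}. Your proof of (2) via small closed polydisks and equidimensionality of irreducible affinoids matches the paper's, which phrases the same idea as cofinality of the subdomains $i^{-1}(\overline{\D}(i(y),\underline{r}))$ among affinoid subdomains containing $y$. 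One caveat that applies throughout: invoking \autoref{definable-bijection-dimension} and \autoref{countable-interior} for $i(Y')$ presupposes that $i(Y')$ is definable, which in an arbitrary tame structure the zero locus of Tate functions need not be; since the projection dimension of \autoref{definable-dimension} is structure-independent, you should say explicitly that the computation is carried out in the rigid subanalytic structure.

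The genuine gap is in your upper bound. You claim that a finite injection $T_{d'}(K)\hookrightarrow A'$ gives $\mathrm{tr.deg}_K\,\mathrm{Frac}(A') = d'$. This is false, because $\mathrm{Frac}(T_{d'}(K))$ does \emph{not} have transcendence degree $d'$ over $K$: already $T_1(K)=K\{t\}$ contains power series transcendental over $K(t)$, so $\mathrm{tr.deg}_K\,\mathrm{Frac}(T_1(K))>1$. Consequently $d'+1$ coordinate functions need not satisfy any polynomial relation over $K$. Concretely, pick $f\in T_1(K)$ with $\lVert f\rVert_{\mathrm{Gauss}}\leq 1$ transcendental over $K(t)$ and present $\B^1=\Sp\bigl(T_2(K)/(x_2-f(x_1))\bigr)$; then $i(\B^1)\subseteq R^2$ is the graph of $f$, which is contained in no algebraic hypersurface $V(P)$, so your argument yields no upper bound at all in this case. (The correct bound $\dim=1$ still holds, because the graph has empty interior --- but for an analytic reason, not an algebraic one.) To repair the step you must work with analytic relations: show that the kernel of the map $T_{d'+1}(K)\to A'$ determined by the chosen $d'+1$ coordinates is nonzero --- which itself requires the non-formal fact that an injection of affinoid algebras cannot raise Krull dimension --- and then that the zero locus in $R^{d'+1}$ of a nonzero element of $T_{d'+1}(K)$ has empty interior, which is a Weierstrass-division argument rather than your elementary induction for polynomials. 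Alternatively, do what the paper does and cite \cite[Lemma 4.2]{lipshitz2000dimension} for this half.
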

\begin{proof}[Proof of (1):]
This is a special case of \cite[Lemma 4.2]{lipshitz2000dimension}. Alternatively, we may reduce to the case that $K$ is second-countable (see \autoref{second-countability-justification}). Then, using Noether's normalization for affinoid algebras, if $d = \dim(A),$ we have a quasi-finite subanalytic surjection $i(Y) \twoheadrightarrow R^d.$ And then we may use an argument very similar to that of \autoref{definable-bijection-dimension}. We omit the details for the alternate argument.

\emph{Proof of (2)}: By definition, we have that \[\dim_{i(y)}i(Y) = \min\{\dim(U \cap i(Y)) : U \subseteq R^n \text{ is open with }i(y)\in U\}.\] We may take this minimum instead over all closed polydisks $\overline{\mathbb{D}}$ of $R^n$ of positive polyradius containing $i(y),$ i.e. \[\dim_{i(y)}i(Y) = \min\left\{\dim\left(\overline{\mathbb{D}}(i(y),\underline{r}) \cap i(Y)\right) : \underline{r} > \underline{0}\right\}.\] 
Since $i^{-1}(\overline{\mathbb{D}}(i(y),\underline{r}))$ is an affinoid subdomain of $Y,$ from the first part (1), we have that \[\dim\left(\overline{\mathbb{D}}(i(y),\underline{r}) \cap i(Y)\right) = \dim(i^{-1}(\overline{\mathbb{D}}(i(y),\underline{r})))\] where the dimension on the right side is the dimension of the affinoid subdomain $i^{-1}(\overline{\mathbb{D}}(i(y),\underline{r}))$ as an analytic space.
Furthermore, note that the affinoid subdomains of the form $i^{-1}(\overline{\mathbb{D}}(i(y),\underline{r}))$ are cofinal in the collection of all affinoid subdomains of $Y$ containing $y$
(use for example \cite[Lemma 1.1.4]{conrad1999irreducible} to see this). Therefore, \begin{align*}
    \dim_{i(y)}i(Y) = \min\{\dim(W) : \,&W \subseteq Y \text{ is an affinoid subdomain} \\&\text{containing }y\}.
\end{align*} The right-hand-side is indeed equal to $\dim(\mathcal{O}_{Y,y})$ (follows from \cite[1.17]{ducros2007variation} and \autoref{affinoid's-dimension}-(2)). 

\emph{Proof of (3):} Follows immediately from (1) and (2).
\end{proof}

\begin{lemma}\label{integral-rigid-spaces-are-equidimensional}
\begin{enumerate}
    \item Suppose $A$ is a $K$-affinoid algebra that is an integral domain. Then every maximal ideal of $A$ has the same height.
    \item Suppose $Y$ is an irreducible rigid analytic variety. Then $Y$ is equidimensional, i.e. for all $y \in Y, \dim(\mathcal{O}_{Y,y}) = \dim(Y).$
\end{enumerate}
\end{lemma}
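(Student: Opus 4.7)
For part (1), my plan is to apply Noether normalization for affinoid algebras, which yields a finite injective $K$-algebra map $T_d(K) \hookrightarrow A$ with $d = \dim(A)$. Every maximal ideal of $T_d(K)$ has height exactly $d$ (by the affinoid Nullstellensatz combined with the fact that $T_d(K)$ is a regular Noetherian ring of dimension $d$ whose closed points have residue fields finite over $K$). For a maximal ideal $\mathfrak{m} \subset A$, its contraction $\mathfrak{n} := \mathfrak{m} \cap T_d(K)$ is maximal by integrality, and the dimension formula for finite extensions of Noetherian integral domains — applicable since $T_d(K)$ is universally catenary — yields $\mathrm{ht}(\mathfrak{m}) = \mathrm{ht}(\mathfrak{n}) = d$. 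Equivalently, the affinoid Nullstellensatz gives $\dim(A/\mathfrak{m}) = 0$, so the dimension formula in the catenary domain $A$ reads $\mathrm{ht}(\mathfrak{m}) + \dim(A/\mathfrak{m}) = \dim(A) = d$.

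For part (2), I would fix $y \in Y$ and an admissible affinoid open neighborhood $U = \Sp(A) \subseteq Y$. By \autoref{affinoid's-dimension}(2),
\[
\dim(\mathcal{O}_{Y,y}) \;=\; \max\{\dim(U_j) : y \in U_j\},
\]
where the $U_j = V(\mathfrak{p}_j)$ are the irreducible components of $U$ corresponding to the minimal primes $\mathfrak{p}_j$ of $A$. Each $A/\mathfrak{p}_j$ is an integral affinoid algebra, so part (1) applied to $A/\mathfrak{p}_j$ gives $\dim(U_j) = \dim(A/\mathfrak{p}_j)$. The claim therefore reduces to showing that every irreducible component $U_j$ of every admissible affinoid open $U \subseteq Y$ satisfies $\dim(U_j) = \dim(Y)$.

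I expect this final reduction to be the principal obstacle, and my plan is to close it by invoking Conrad's theory of global irreducible components of rigid analytic spaces \cite{conrad1999irreducible}. Since $Y$ is irreducible, its unique global irreducible component is $Y$ itself; Conrad's local-global compatibility then shows that each $U_j$ sits inside this unique global component as a nonempty admissible open, and that all local irreducible components arising in this way from a single global component share a common dimension. Because $\dim(Y)$ is by definition attained as $\dim(\mathcal{O}_{Y,y_0})$ for some $y_0 \in Y$, and hence on at least one such $U_j$ by the formula displayed above, this common dimension is forced to equal $\dim(Y)$. Feeding this into the reduction yields $\dim(\mathcal{O}_{Y,y}) = \dim(Y)$ for every $y \in Y$, proving that $Y$ is equidimensional.
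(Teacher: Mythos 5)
Your proposal is correct and follows essentially the same route as the paper: part (1) via Noether normalization $T_d(K)\hookrightarrow A$ reducing to the fact that all maximal ideals of $T_d(K)$ have height $d$ (the paper closes this with Going Down over the normal ring $T_d(K)$, you with the dimension formula in a universally catenary domain — an interchangeable piece of commutative algebra), and part (2) via \autoref{affinoid's-dimension}(2) together with Conrad's theory of irreducible components, which is exactly the reference the paper gives. No gaps.
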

\begin{proof}
For (1), use Noether normalization for affinoid algebras, the Going Down theorem (\cite[\href{https://stacks.math.columbia.edu/tag/00H8}{Tag 00H8}]{stacks-project}) and \cite[Chapter 2, Proposition 17]{bosch1984non}.

For (2), we refer the reader to the paragraph preceding \cite[Lemma 2.2.3]{conrad1999irreducible}.  
\end{proof}

\begin{lemma}\label{local-ring-is-equidimensional}
Let $X$ be a reduced equidimensional rigid space over $K$, i.e. $\dim(\mathcal{O}_{X,x}) = \dim(X)$ for all $x \in X.$ Then for every $x \in X$ and for every minimal prime ideal $\mathfrak{q}$ of $\mathcal{O}_{X,x}$ we have $\dim(\mathcal{O}_{X,x}/\mathfrak{q}) = \dim(X).$
\end{lemma}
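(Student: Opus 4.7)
The plan is to reduce to the affinoid setting, and then deduce the result from the equidimensionality hypothesis applied at a carefully chosen generic point of the irreducible component corresponding to $\mathfrak{q}$.

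First, I would choose an affinoid open neighborhood $U = \Sp(A) \subseteq X$ containing $x$, with $x$ corresponding to a maximal ideal $\mathfrak{m} \subset A$, so that $\mathcal{O}_{X,x} = A_\mathfrak{m}$. Since $X$ is reduced, so is $A$, and hence the minimal primes of $A_\mathfrak{m}$ are exactly $\mathfrak{p}_j A_\mathfrak{m}$ as $\mathfrak{p}_j$ ranges over minimal primes of $A$ contained in $\mathfrak{m}$; equivalently, the $\mathfrak{p}_j$ correspond to the irreducible components $V_j$ of $U$ through $x$. Writing $\mathfrak{q} = \mathfrak{p}_j A_\mathfrak{m}$ for some such $j$, we then have $\mathcal{O}_{X,x}/\mathfrak{q} = (A/\mathfrak{p}_j)_\mathfrak{m}$.

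Second, because $A/\mathfrak{p}_j$ is an affinoid algebra that is an integral domain, Lemma \ref{integral-rigid-spaces-are-equidimensional}(1) tells us that every maximal ideal of $A/\mathfrak{p}_j$ has the same height, and so $\dim((A/\mathfrak{p}_j)_\mathfrak{m}) = \dim(A/\mathfrak{p}_j) = \dim(V_j)$ by Lemma \ref{affinoid's-dimension}(1). Thus the problem reduces to showing $\dim(V_j) = \dim(X)$.

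Third, I would pick a point $y \in V_j$ that lies on no other irreducible component of $U$ — such a point exists because $V_j$, being an irreducible component, is not contained in the (finite) union of the remaining components. Since $U$ is admissible open in $X$, we have $\mathcal{O}_{X,y} = \mathcal{O}_{U,y}$, and because $V_j$ is the only irreducible component of $U$ through $y$, Lemma \ref{affinoid's-dimension}(2) gives $\dim(\mathcal{O}_{U,y}) = \dim(V_j)$. The equidimensionality hypothesis on $X$ at $y$ now yields $\dim(V_j) = \dim(\mathcal{O}_{X,y}) = \dim(X)$, completing the proof upon concatenating the equalities of steps two and three.

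The only delicate point is the existence of the generic point $y$ in step three; once one grants that irreducible components of a Noetherian rigid space are pairwise not contained in one another (a purely algebraic statement about minimal primes of $A$), the argument is essentially a straightforward bookkeeping of the standard dimension formulas already collected in the preceding lemmas.
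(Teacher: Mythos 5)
Your steps two and three are sound and in fact coincide with the endgame of the paper's own proof (reduce to a component $V_j$, apply \autoref{integral-rigid-spaces-are-equidimensional} and \autoref{affinoid's-dimension}, and evaluate the equidimensionality hypothesis at a point lying on $V_j$ alone). The gap is in step one. The stalk $\mathcal{O}_{X,x}$ for the $G$-topology is the direct limit $\varinjlim_{U'\ni x}\mathcal{O}(U')$ over affinoid subdomains $U'$ containing $x$, and this is strictly larger than the localization $A_\mathfrak{m}$ (for the closed unit disk at the origin it consists of all power series converging on \emph{some} disk of positive radius, not just fractions of strictly convergent ones); the paper only ever uses that $A_\mathfrak{m}\hookrightarrow\mathcal{O}_{X,x}$ is a faithfully flat local map inducing an isomorphism on completions. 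As a result, your description of the minimal primes of $\mathcal{O}_{X,x}$ as the $\mathfrak{p}_jA_\mathfrak{m}$ for $\mathfrak{p}_j$ the minimal primes of $A$ contained in $\mathfrak{m}$ fails: minimal primes of $\mathcal{O}_{X,x}$ correspond to \emph{local analytic branches} of $X$ at $x$, and there can be more of these than irreducible components of $\Sp(A)$ through $x$. For an irreducible affinoid curve with a nodal point $x$ (an affinoid piece of $y^2=t^2(t+1)$ around the origin, say, with $A$ a domain), $A_\mathfrak{m}$ has a single minimal prime while $\widehat{A}_\mathfrak{m}\cong K\llbracket u,v\rrbracket/(uv)$ and the (Henselian) ring $\mathcal{O}_{X,x}$ already has two minimal primes, neither of which is extended from a minimal prime of $A$. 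So your $\mathfrak{q}$ need not be of the form $\mathfrak{p}_jA_\mathfrak{m}$, and the identification $\mathcal{O}_{X,x}/\mathfrak{q}=(A/\mathfrak{p}_j)_\mathfrak{m}$ is unjustified.

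The repair is exactly the first half of the paper's argument: since $\mathcal{O}_{X,x}$ is Noetherian, choose generators $h_1,\dots,h_m$ of $\mathfrak{q}$ and shrink to an affinoid $\Sp(B)\ni x$ on which the $h_i$ live, so that the branch cut out by $\mathfrak{q}$ becomes an honest irreducible component of $\Sp(B)$. With $\mathfrak{n}$ the maximal ideal of $B$ at $x$ and $J=(h_1,\dots,h_m)B$, faithful flatness of $B_\mathfrak{n}\hookrightarrow\mathcal{O}_{X,x}$ together with going-down shows that $JB_\mathfrak{n}$ is a minimal prime of $B_\mathfrak{n}$, and the isomorphism of completions gives $\dim(B_\mathfrak{n}/JB_\mathfrak{n})=\dim(\mathcal{O}_{X,x}/\mathfrak{q})$. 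From that point on, your steps two and three apply verbatim with $A$ replaced by $B$ and $\mathfrak{p}_j$ by the contraction of $JB_\mathfrak{n}$ to $B$.
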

\begin{proof}
Evidently for every $x \in X$, $\dim(\mathcal{O}_{X,x}/\mathfrak{q}) \leq \dim(\mathcal{O}_{X,x}) = \dim(X).$ Suppose the Lemma was false. Then for some $x,$ we would have \[\dim(\mathcal{O}_{X,x}/\mathfrak{q}) < \dim(X).\] Since $\mathcal{O}_{X,x}$ is Noetherian (\cite[\S 7.3.2 Proposition 7]{bosch1984non}), $\mathfrak{q}$ is finitely generated; let us suppose that $\mathfrak{q} = (h_1,\ldots,h_m),$ for elements $h_i \in \mathcal{O}_{X,x}.$ We may choose an open affinoid domain $\Sp(B)$ in $X$ containing $x$ such that the $h_i$ are (images of elements) in $B.$ Let $\mathfrak{n}\in \mathrm{MaxSpec}(B)$ be the maximal ideal corresponding to the point $x,$ and let $J := (h_1,\ldots,h_m) B$ be the ideal in $B$ generated by the $h_i$. 

We claim first that $J B_\mathfrak{n}$ is a minimal prime ideal of $B_\mathfrak{n}.$ To see this, note that since $B_\mathfrak{n}\hookrightarrow \mathcal{O}_{X,x}$ is a faithfully flat map, (as these local rings have the same completions), $JB_\mathfrak{n}$ is the contraction of $J\mathcal{O}_{X,x} = \mathfrak{q}$ (see \cite[\href{https://stacks.math.columbia.edu/tag/05CK}{Tag 05CK}]{stacks-project}) and is therefore a prime ideal. Moreover, since $B_\mathfrak{n}\hookrightarrow \mathcal{O}_{X,x}$ is faithfully flat, it has the Going-Down property (\cite[\href{https://stacks.math.columbia.edu/tag/00HS}{Tag 00HS}]{stacks-project}). Therefore, as $\mathfrak{q}$ is a minimal prime of $\mathcal{O}_{X,x}$, its contraction $JB_\mathfrak{n}$, must also be minimal. 

We have $\left(B_\mathfrak{n}/JB_\mathfrak{n}\right)\widehat{} = \widehat{B_\mathfrak{n}}/J\widehat{B_\mathfrak{n}} = \widehat{\mathcal{O}}_{X,x}/\mathfrak{q}\widehat{\mathcal{O}}_{X,x} = \left(\mathcal{O}_{X,x}/\mathfrak{q}\right)\widehat{}.$ Hence, \[\dim(B_\mathfrak{n}/JB_\mathfrak{n}) = \dim(\mathcal{O}_{X,x}/\mathfrak{q}).\] Now let $\mathfrak{p}\subseteq B$ denote the contraction of $JB_\mathfrak{n}$ to $B,$ so then $\mathfrak{p}$ is a minimal prime of $B$ contained in $\mathfrak{n}.$ Then $\dim(B_\mathfrak{n}/JB_\mathfrak{n}) = \dim\left((B/\mathfrak{p}B)_\mathfrak{n}\right) = \dim(B/\mathfrak{p}B),$ where the last equality follows from the fact that $B/\mathfrak{p}$ being an affinoid algebra that is an integral domain, all its maximal ideals have the same height (see \autoref{integral-rigid-spaces-are-equidimensional}). 

Therefore, in all we have shown that $\dim(B/\mathfrak{p}) = \dim(\mathcal{O}_{X,x}/\mathfrak{q}),$ for a minimal prime $\mathfrak{p}$ of $B.$ And since we are assuming that $\dim(\mathcal{O}_{X,x}/\mathfrak{q}) < \dim(X),$ this means that $\dim(B/\mathfrak{p}) < \dim(X).$ However, now find a closed point $\mathfrak{n}' \in \mathrm{MaxSpec}(B)$ containing $\mathfrak{p}$ but not containing any other minimal prime of $B$ (this is possible since $B$ is Jacobson and so closed points are dense). If $\mathfrak{n}'$ corresponds to the point $x' \in X,$ we have $\dim(X) > \dim(B/\mathfrak{p}) \geq \dim(B_{\mathfrak{n}'}) = \dim(\mathcal{O}_{X,x'}).$ This contradicts the equidimensionality of $X.$
\end{proof}

\section{The non-archimedean definable Chow theorem}\label{proof}

The goal of this section is to prove a version of the Definable Chow theorem in the non-archimedean setting. Let $K$ be as in the previous chapter. Namely, $K$ shall denote an algebraically closed field, complete with respect to a non-trivial non-archimedean absolute value. Moreover we assume that $K$ is second countable. The main goal of this chapter is to prove the following result:

\begin{theorem}\label{affine-chow}
Let $X$ be a closed analytic subset of $\mathbb{A}^{n,\text{an}}_{K}$. Suppose that for some tame structure on $K$, $X$ is definable as a subset of $\mathbb{A}^n(K) = K^n.$ Then $X$ is algebraic i.e. $X$ is the vanishing locus of a finite collection of polynomials in $K[t_1,\ldots,t_n].$
\end{theorem}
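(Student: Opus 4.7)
The plan is to induct on $d := \dim(X)$, combining the Theorem of the Boundary (\autoref{theorem-of-boundary}) and the definable Liouville theorem (\autoref{upgraded-polynomial-lemma-v2}). The base case $d = 0$ is immediate: in a tame structure every $0$-dimensional definable subset of $K$ is a finite Boolean combination of $0$-radius disks, hence finite. The coordinate projections of $X$ are therefore finite subsets of $K$, so $X \subseteq K^n$ is itself finite, and hence algebraic.

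For the inductive step, I would first invoke the Theorem of the Boundary (applied to the stratification of $X$ by local dimension) to write $X = X_d \cup X_{<d}$, where $X_d$ is closed analytic and equidimensional of dimension $d$ and $X_{<d}$ is a closed analytic definable set of strictly smaller dimension; the inductive hypothesis yields algebraicity of $X_{<d}$, so we may replace $X$ by $X_d$ and assume $X$ is equidimensional of dimension $d$. After a generic linear change of coordinates on $\mathbb{A}^n_K$, the projection $\pi : \mathbb{A}^{n,\mathrm{an}}_K \to \mathbb{A}^{d,\mathrm{an}}_K$ to the first $d$ coordinates will restrict on $X$ to a quasi-finite map with bounded generic fibre cardinality $N$ over a Zariski dense open $V \subseteq \mathbb{A}^{d,\mathrm{an}}_K$.

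The crux of the argument, and its principal obstacle, is now the following. For each index $d < i \le n$ and each $1 \le k \le N$, consider the $k$-th elementary symmetric function $s_k^{(i)}$ of the values $\{t_i(x) : x \in \pi|_X^{-1}(y)\}$ along the fibre over $y \in V$; these are rigid analytic and definable on $V$. Using the Theorem of the Boundary to guarantee that $\mathbb{A}^{d,\mathrm{an}}_K \setminus V$ has dimension strictly less than $d$, together with a classical non-archimedean Riemann/Levi extension argument applied to the bounded symmetric functions, one extends the $s_k^{(i)}$ meromorphically to all of $\mathbb{A}^{d,\mathrm{an}}_K$. The definable Liouville theorem (\autoref{upgraded-polynomial-lemma-v2}), in combination with the faithfully-flat-descent lemma alluded to in the acknowledgements (a rigid analytic function that is algebraic on a Zariski dense open is algebraic), then forces each $s_k^{(i)}$ to be a rational function on $\mathbb{A}^d$. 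Clearing denominators yields polynomial equations $P_i(t_1,\dots,t_d, t_i) = 0$ satisfied by $X$ for each $d < i \le n$, so $X \subseteq W := V(P_{d+1},\dots,P_n)$, an algebraic subvariety of $\mathbb{A}^n$ of dimension exactly $d$. Since $X$ is closed analytic and equidimensional of dimension $d$ inside $W^{\mathrm{an}}$ of the same dimension, a standard irreducibility and dimension comparison identifies $X$ with a union of $d$-dimensional irreducible analytic components of $W^{\mathrm{an}}$, each of which is the analytification of an algebraic irreducible component of $W$, establishing the required algebraicity. The hardest part is engineering the extension of the symmetric functions $s_k^{(i)}$ across the non-étale locus and putting them in a form where the definable Liouville theorem applies; this is where the Theorem of the Boundary and the descent lemma will do the main work.
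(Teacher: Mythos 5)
Your outline has the right ingredients (project, take symmetric functions of the fibres, apply the definable Liouville theorem, control exceptional loci with the Theorem of the Boundary, induct), but there are three genuine gaps, and they are exactly the points where the paper's argument is structured differently.

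First, you never establish that the projection $\pi|_X : X \to \mathbb{A}^{d,\mathrm{an}}_K$ is \emph{finite}; you only assert quasi-finiteness with bounded generic fibre cardinality after a generic linear change of coordinates. Quasi-finiteness is not enough: without properness the fibres can escape to infinity (as for $\{xy=1\}\to \mathbb{A}^1$), the image need not be closed analytic, and the elementary symmetric functions $s_k^{(i)}$ need not be analytic or even well defined. The paper secures finiteness by applying the Theorem of the Boundary to the closure of $X$ in $\mathbb{P}^n(K)$ to find a point $q$ at infinity with $q \notin \mathrm{cl}(X)$, and then proves properness of the projection away from $q$ via a Berkovich-space argument; this is the step your "generic linear change of coordinates" silently assumes. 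Second, your set $V$ is asserted to be \emph{Zariski} dense open, but at that stage of your induction nothing is known to be algebraic, so the bad locus in $\mathbb{A}^d$ is only a definable set of smaller dimension, not a Zariski closed one. The paper resolves this by inducting on $d+n$ rather than on $d$ alone and by projecting one coordinate at a time: the image $Y=\pi(X)\subseteq K^{n-1}$ is closed analytic and definable, hence algebraic by the induction on the ambient dimension, and only then does it make sense to speak of the Zariski open \'etale locus (whose complement is again algebraic by induction, via \autoref{etale-locus}). Your induction on $d$ alone cannot bootstrap this. Third, your extension step relies on the rigid subanalytic Riemann/Levi extension theorem, which the paper explicitly does \emph{not} use in the proof of the Chow theorem: it is proved only for the subanalytic structure (not for an arbitrary tame structure), it requires the exceptional locus to be a closed \emph{analytic} subvariety rather than a definable set of smaller dimension, and even granting the meromorphic extension you would need a meromorphic analogue of \autoref{upgraded-polynomial-lemma-v2} to conclude rationality. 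The paper sidesteps all of this by working with the characteristic polynomial over the Zariski open $U\subseteq Y$, where the coefficients are honest global analytic definable functions on $U^{\mathrm{an}}$ and \autoref{upgraded-polynomial-lemma-v2} applies directly. If you repair the finiteness step with the point-at-infinity argument and switch to the $d+n$ induction, your plan essentially collapses onto the paper's proof.
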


We outline the major steps of the proof below:
\begin{description}
    \item[Step 0:] Our first step is to show that for a reduced variety $X$ over $K,$ a global analytic function $f \in H^0(X^\mathrm{an},\mathcal{O}_{X^\mathrm{an}})$ whose graph is definable, must be algebraic. This is the content of \autoref{upgraded-polynomial-lemma-v2}. The 
    Proposition may be seen as a non-archimedean definable analogue of Liouville's theorem from complex analysis. The proof proceeds by a devissage argument:
    \begin{itemize}
        \item First, when $X =\mathbb{A}^n_{K}$ - \autoref{polynomial-lemma}.
        \item Second, when $X$ is a smooth affine variety over $K$ - \autoref{upgraded-polynomial-lemma}, using Noether normalization to reduce to the first case. 
        \item And lastly, for a general reduced variety $X$ ( \autoref{upgraded-polynomial-lemma-v2}), using \autoref{rigid-function-algebraic-on-dense-open-implies-algebraic} to reduce to the smooth case.
    \end{itemize}
    \item[Step 1:] Now suppose $X \subseteq \mathbb{A}^{n,\mathrm{an}}_{K}$ is as in the statement of \autoref{affine-chow}. We shall induct on $\dim(X)+n$. By the Theorem of the Boundary, $\dim(\mathrm{Fr}(X)) <\dim(X)$ and so we can find a point $q \in \mathbb{P}^n(K)\setminus \mathrm{cl}(X).$
    \item[Step 2:] The projection from $q$ onto a hyperplane $\mathbb{H}\subseteq K^n$ not containing it, $\pi\vert_X : X \rightarrow \mathbb{H}$ is \emph{finite}. The image $Y = \pi(X)$ is an analytic subset of $\mathbb{A}^{n-1,\mathrm{an}}_{K}$, and by induction therefore algebraic. 
    \item[Step 3:] The \'etale locus $U \subseteq Y$ of $\pi\vert_X :X \rightarrow Y$ is definable (thanks to \autoref{etale-locus}), and of smaller dimension, therefore algebraic.  
    \item[Step 4:] The characteristic polynomial of the finite \'etale map \[\pi :\pi^{-1}(U^\mathrm{an}) \rightarrow U^\mathrm{an},\] has coefficients in $H^0(U^\mathrm{an})$ that are \emph{definable}. By Step 0, we shall then conclude $\pi^{-1}(U^\mathrm{an})\subseteq X$ is algebraic. The complement in $X$ is of smaller dimension, and by induction thus algebraic.
\end{description}

\subsection{A non-archimedean definable Liouville theorem}

\begin{lemma}\label{polynomial-lemma}
Let $(X,\mathcal{O}_X) = \mathbb{A}^{n,an}_{K}$ be the rigid $n$-dimensional affine plane over $K$ and let $f \in H^0(X,\mathcal{O}_X)$ be a global analytic function. Suppose $f$ viewed as a function $f : K^n \rightarrow K$ is definable. Then $f$ is a polynomial function.
\end{lemma}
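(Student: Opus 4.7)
I would prove the lemma by induction on $n$, with the base case $n = 1$ doing all the serious non-archimedean work. For $n = 1$, the zero set $Z(f) \subseteq K$ is definable (since the graph of $f$ is) and discrete (non-zero rigid analytic functions on $K$ have finitely many zeros in each closed disk by Weierstrass preparation, c.f.\ \autoref{zeroes-of-tate-functions-new}). Any discrete definable subset of $K$ must be finite: via \autoref{def:definable-in-C_p} it reduces to discrete Boolean combinations of disks inside $R$, and every non-empty cell of such a combination contains a non-empty (cl)open disk, contradicting discreteness unless the set is finite. Writing $Z(f) = \{\alpha_1, \ldots, \alpha_m\}$ with multiplicities $e_i$, the function $g(z) := f(z)/\prod_i (z-\alpha_i)^{e_i}$ is entire and nowhere-vanishing. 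Weierstrass preparation on each disk $\{|z| \leq r\}$ forces $g$ to be a unit there, hence of constant modulus $|g(0)|$ on that disk; letting $r \to \infty$ shows $g$ is bounded, and the non-archimedean Liouville theorem (sup norm equals Gauss norm on Tate algebras, so $|a_i| r^i \leq \sup|g|$ for all $r$) forces $g$ to be constant. Thus $f$ is a polynomial.

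\textbf{Inductive step.} Assume the lemma for $n-1$ and expand
\[
f(z_1, \ldots, z_n) = \sum_{i \geq 0} a_i(z_1, \ldots, z_{n-1}) z_n^i,
\]
where each $a_i$ is an entire function on $K^{n-1}$, and by \autoref{basic-properties}\ref{partials} each $a_i$ is definable. For any fixed $w \in K^{n-1}$, the one-variable slice $f(w, \cdot)$ is an entire definable function, hence a polynomial by the base case, so $a_i(w) = 0$ for all $i$ sufficiently large. Setting $S_d := \{w \in K^{n-1} : a_i(w) = 0 \text{ for all } i > d\}$ therefore yields $K^{n-1} = \bigcup_{d \geq 0} S_d$, with each $S_d$ closed (as an intersection of zero loci of continuous functions). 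Suppose for contradiction that no single $d$ suffices; then for each $d$ some $a_{i(d)}$ with $i(d) > d$ is not identically zero, its zero locus is a proper closed analytic subset with empty interior (by the identity principle on the connected, irreducible rigid space $\mathbb{A}^{n-1,\mathrm{an}}_K$), and hence $S_d$ itself has empty interior. The Baire category theorem on the complete metric space $K^{n-1}$ then forbids $K^{n-1} = \bigcup_d S_d$. Hence some $d_0$ works and $f = \sum_{i=0}^{d_0} a_i(z_1, \ldots, z_{n-1}) z_n^i$ with each $a_i$ an entire definable function on $K^{n-1}$; the inductive hypothesis applied to each $a_i$ then produces a polynomial expression for $f$.

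\textbf{Main obstacle.} The genuine content sits in the base case, which bundles two non-archimedean inputs: that a discrete definable subset of $K$ is finite (a direct consequence of the tameness axiom plus the clopenness of non-archimedean disks), and that an entire nowhere-vanishing function on $K$ is constant (non-archimedean Liouville via Weierstrass preparation and equality of sup norm with Gauss norm). The inductive step is mostly formal, but one subtlety worth flagging is that the $S_d$ are only countable intersections of definable loci and hence not themselves definable, so \autoref{countable-interior} does not apply directly and one must instead invoke Baire category on $K^{n-1}$ as a complete metric space. No use will be made of any deeper dimension-theoretic result; the proof is fully self-contained within \autoref{basic-properties}, \autoref{def:definable-in-C_p}, and \autoref{zeroes-of-tate-functions-new}.
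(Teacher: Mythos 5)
Your proposal is correct and follows the same architecture as the paper's proof: induction on $n$, a base case resting on the finiteness of the discrete definable zero set $f^{-1}(0)$, and an inductive step that slices in the last variable and applies the Baire category theorem to $K^{n-1}=\bigcup_{d}\bigcap_{i>d}V(a_i)$. The only genuine divergence is how the base case is finished. The paper argues directly on the coefficients: if $a_i,a_j\neq 0$ with $i<j$, then for $r$ large the rescaling $g_r(t)=f(p^{-r}t)$ satisfies $\epsilon(g_r)\geq j$, so by \autoref{zeroes-of-tate-functions-new} infinitely many nonzero coefficients produce infinitely many zeros, contradicting finiteness of the definable discrete zero set. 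You instead factor out the finitely many zeros and run a non-archimedean Liouville argument on the resulting nowhere-vanishing entire function $g$ (unit in each Tate algebra, hence of constant modulus on each disk, hence constant by comparing Gauss norms). Both rest on the same Weierstrass-preparation input; the paper's route is a little more economical since it never needs to form the quotient $g$ or discuss units of Tate algebras, while yours isolates the reusable fact that a bounded entire function is constant. Your reordering of the inductive step (Baire first, then induction on the finitely many surviving coefficients, versus the paper's induction first) is immaterial, and your explicit remark that the sets $S_d$ are not definable, so that one must invoke Baire on the complete metric space $K^{n-1}$ rather than \autoref{countable-interior}, together with the appeal to the identity principle to see that the relevant closed sets are nowhere dense, makes explicit two points the paper leaves implicit.
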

\begin{proof}
We prove this by induction on $n.$ 

\emph{Case of} $n=1$: A function $f \in H^0(\mathbb{A}^{1,\mathrm{an}}_{K},\mathcal{O}_{\mathbb{A}^{1,\mathrm{an}}_{K}})$ is given by a globally convergent power series $f(z)= \sum_{i \geq 0} a_i z^i.$ Thus, $\lim_{i \rightarrow \infty} (p^{ir}\cdot |a_i|) = 0$ for every $r \geq 0.$ 
For a given $r\geq 0$, the number of zeroes of $f(z)$ on the disk $\{z\in K : |z| \leq p^r\}$ is the number of zeroes (with the same multiplicities) of $g_r(t) := f(p^{-r}t) = \sum_{ \geq 0} a_i p^{-ir} t^i$ in the unit disk $|t|\leq 1,$ which by \autoref{zeroes-of-tate-functions-new} is at least $\epsilon(g_r).$ Now given any $i < j$ with $a_i, a_j \neq 0,$ we note that for $r$ large enough $p^{rj}|a_j|\geq p^{ri}|a_i|$ and thus $\epsilon(g_r) \geq j$. Thus, if $a_i \neq 0$ for infinitely many $i, f$ must have infinitely many zeroes. However, as $f$ is definable, $f^{-1}(0)$ is a definable subset of $K$ that is discrete, and must therefore be necessarily finite. Hence, it cannot be the case that $a_i \neq 0$ for infinitely many $i$, i.e. $f$ is a polynomial.

\emph{Proof for general} $n \geq 1$: The global analytic function $f\in H^0(\mathbb{A}^{n,an}_{K})$ is again given by a globally convergent power series on $K^n.$ Thus, we write $f(z_1,\ldots,z_n) = \sum_{i \geq 0} a_i(z_1,\ldots,z_{n-1}) z_n^i,$ where $a_i \in H^0(\mathbb{A}^{n-1,an}_{K}).$ Moreover, each $a_i(z_1,\ldots,z_{n-1})$ is also definable viewed as a function on $K^{n-1}$ (by \autoref{basic-properties}.(iv)). By induction, we have that the $a_i(z_1,\ldots,z_{n-1})$ are polynomials in $K[z_1,\ldots,z_{n-1}]$. 
From the $n=1$ case, it must be that for every $\underline{\bm{\lambda}}\in K^{n-1},$ the sequence $a_i(\underline{\bm{\lambda}})$ is eventually $0$. In other words, $K^{n-1} = \bigcup_{j \geq 0} \cap_{i \geq j} V(a_i),$ a countable union of closed subsets of $K^{n-1}.$ By the Baire Category Theorem, this is only possible if for some $j \geq 0, K^{n-1} = \bigcap_{i \geq j}V(a_i)$, i.e. $a_i = 0$ for all $i \geq j$ and hence $f$ is a polynomial. 
\end{proof}


\begin{lemma}\label{upgraded-polynomial-lemma}
Let $X$ be an integral, smooth scheme of finite type over $K$ and denote by $X^\text{an}$ the rigid analytification of $X.$ Let $f \in H^0(X^\text{an},\mathcal{O}_{X^{\text{an}}})$ be a global rigid analytic function on $X^\text{an}$ such that the graph of $f$ viewed as a subset of $X(K) \times K$ is definable. Then $f \in H^0(X,\mathcal{O}_X).$ 
\end{lemma}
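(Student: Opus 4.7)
Since the conclusion $f \in H^0(X,\mathcal{O}_X)$ is Zariski-local on $X$, I may assume $X = \Spec(A)$ is an integral smooth affine variety over $K$, and in particular $X$ is normal. Pick a Noether normalization $\pi : X \to \mathbb{A}^d_K$ where $d = \dim X$; since $X$ is smooth hence Cohen--Macaulay, miracle flatness makes $\pi$ finite flat of constant rank $r = [K(X) : K(t_1,\dots,t_d)]$. The strategy is to apply \autoref{polynomial-lemma} to the coefficients of the characteristic polynomial of multiplication by $f$, and then to descend this algebraic information back to $f$ itself.

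The pushforward $\pi^{\mathrm{an}}_*\mathcal{O}_{X^{\mathrm{an}}}$ is a locally free sheaf of rank $r$ on $\mathbb{A}^{d,\mathrm{an}}_K$, so multiplication by $f$ has a well-defined characteristic polynomial
\[ \chi_f(T) = T^r + c_1 T^{r-1} + \cdots + c_r \in H^0(\mathbb{A}^{d,\mathrm{an}}_K)[T]. \]
I would argue that each $c_k$, viewed as a function $K^d \to K$, is definable. On the étale locus $V^\circ \subseteq \mathbb{A}^d_K$ of $\pi$, the fiber $\pi^{-1}(v)$ is an unramified set of $r$ classical points $\{u_1,\dots,u_r\}$ and $c_k(v)$ is (up to sign) the $k$-th elementary symmetric function of $f(u_1),\dots,f(u_r)$, plainly definable from $f$ together with the definable family $\{(v,u) : \pi(u)=v\}$. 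Since $V^\circ(K)$ is metrically dense in $K^d$ and $c_k$ is analytic hence continuous, the graph of $c_k$ on all of $K^d$ coincides with the topological closure of its restriction to $V^\circ(K) \times K$; tame structures are closed under topological closure, so each $c_k$ is globally definable. \autoref{polynomial-lemma} then forces $c_k \in K[t_1,\dots,t_d]$, so $\chi_f \in K[t_1,\dots,t_d][T]$, and $\chi_f(f)=0$ exhibits $f$ as integral over $K[t_1,\dots,t_d]$ inside $H^0(X^{\mathrm{an}})$.

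The main obstacle is now to promote this integral relation to $f \in A$. Let $p \in K[t_1,\dots,t_d][T]$ be the squarefree part of $\chi_f$; since $X$ is reduced so is $H^0(X^{\mathrm{an}})$, and $p(f)=0$ still. On the dense Zariski open $V^{\circ\circ} \subseteq \mathbb{A}^d_K$ where $\mathrm{disc}(p) \neq 0$, the reduced algebraic scheme $Z := V(p) \subseteq \mathbb{A}^{d+1}_K$ is finite étale over $V^{\circ\circ}$, and the pullback $W := X \times_{\mathbb{A}^d} Z$ is finite étale of degree $\deg p$ over $X|_{V^{\circ\circ}}$. The graph of $f$ defines an analytic section of $W^{\mathrm{an}} \to X^{\mathrm{an}}|_{V^{\circ\circ}}$; since sections of a finite étale cover over a smooth connected base are controlled by idempotents in the pushforward (identical in the algebraic and rigid-analytic categories, using that smooth irreducible varieties have connected analytifications), this analytic section must come from an algebraic one, producing $\tilde f \in \mathcal{O}_X(X|_{V^{\circ\circ}}) \subseteq K(X)$ with $\tilde f = f$ analytically on the dense open $X^{\mathrm{an}}|_{V^{\circ\circ}}$. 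Finally, at every closed point $x \in X$ the globally analytic $f$ lies in $\widehat{\mathcal{O}}_{X,x}$ and shares its formal expansion with $\tilde f$, so $\tilde f \in K(X) \cap \widehat{\mathcal{O}}_{X,x} = \mathcal{O}_{X,x}$ by normality of $X$ (Krull--Nagata); since $A = \bigcap_{\mathfrak{m}} A_{\mathfrak{m}}$ over the maximal ideals of the Jacobson ring $A$, we conclude $\tilde f \in A$, and by analytic continuation $f = \tilde f \in A$.
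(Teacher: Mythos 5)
Your proposal is correct in characteristic zero, and its second half takes a genuinely different route from the paper. The paper's proof also reduces to a finite locally free Noether normalization $\pi$ and also extracts definability from the fibers of $\pi$ over a dense \'etale locus together with continuity, but instead of the characteristic polynomial it writes $f=\sum_k a_k g_k$ in terms of a module basis $a_1,\dots,a_r$ of $A$ over $K[\underline{t}]$ and inverts the $r\times r$ system $f(P_j)=\sum_k a_k(P_j)g_k(\underline{u})$ over the \'etale locus to see that each $g_k$ is definable; once \autoref{polynomial-lemma} forces the $g_k$ to be polynomials, the conclusion $f=\sum_k a_k g_k\in A$ is immediate. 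Your characteristic-polynomial variant buys a slightly cleaner definability step (no need to trivialize the pushforward globally, and the symmetric functions of $f$ on the fibers are manifestly definable), but it only yields an integral equation $\chi_f(f)=0$ over $K[\underline{t}]$, so you must pay at the end with the descent through idempotents of the finite \'etale cover and the argument $K(X)\cap\widehat{\mathcal{O}}_{X,x}=\mathcal{O}_{X,x}$. Both steps are correct --- the idempotent step rests on the GAGA-type fact that analytification preserves connected components of finite-type $K$-schemes, which is in Conrad's paper already cited in the text --- but note that once you have $\tilde f\in\mathcal{O}_X(X|_{V^{\circ\circ}})$ agreeing with $f$ on a dense Zariski open of the integral $X$, you are exactly in the situation of \autoref{rigid-function-algebraic-on-dense-open-implies-algebraic}, so you could finish by citing that lemma instead of redoing the faithfully flat descent by hand.

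The one genuine gap is in positive characteristic. Your argument needs the \'etale locus $V^\circ$ of $\pi$ to be Zariski dense, both for the definability of the $c_k$ and later for $\mathrm{disc}(p)\neq 0$, and a Noether normalization of a smooth affine variety in characteristic $p$ need not be generically \'etale (e.g.\ $K[t]\hookrightarrow K[s]$, $t\mapsto s^p$, is finite flat and nowhere \'etale). The paper handles this by invoking Kedlaya's theorem to produce a generically \'etale normalization after a further finite affine refinement; your proof needs the same input. Separately, even with $\pi$ generically \'etale, the squarefree part $p$ of $\chi_f$ is only guaranteed to have no repeated irreducible factors over $K(\underline{t})$; in characteristic $p$ an irreducible factor could a priori be inseparable, in which case $\mathrm{disc}(p)$ vanishes identically and $V^{\circ\circ}$ is empty, so the finite \'etale cover $Z\to V^{\circ\circ}$ never materializes. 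This must be ruled out (or $p$ replaced by a separable equation satisfied by $f$) before the descent step can run. In characteristic zero neither issue arises and your proof is complete as written.
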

\begin{proof}
By passing to a \emph{finite} affine cover of $X$ we may assume $X= \text{Spec}(A)$ for a domain $A$ that is regular and a $K$-algebra of finite type. Choose a generically \'etale, Noether normalization of $A$, i.e. a generically \'etale, finite inclusion $i : K[t_1,\ldots,,t_d] \hookrightarrow A.$ If $K$ has characteristic zero, generic \'etaleness comes for free, and when $K$ has positive characteristic that there does exist such a Noether normalization (up to further passing to a finite affine cover) follows for instance from \cite[Theorem 1]{kedlaya2004etale}.

Since $A$ is regular (in particular Cohen-Macaulay), $i$ is locally free by Hironaka's Miracle Flatness criterion \cite[\href{https://stacks.math.columbia.edu/tag/00R4}{Tag 00R4}]{stacks-project}. There is a finite set of polynomials $p_i(\underline{t}), 1\leq i \leq m$ generating the unit ideal in $K[\underline{t}]$ such that $A[p_i^{-1}]$ is free over $K[\underline{t}][p_i^{-1}]$ for each $i$. Moreover, it is easy to see that $K[\underline{t}][p_i^{-1}]$ is finite free over another pure polynomial subring in $d$ variables (just need to change variables). Thus, by replacing $A$ with $A[p_i^{-1}]$ and modifying the Noether normalization map as above, we are in the case where $A$ is finite free over the polynomial ring $K[t_1,\ldots,t_d],$ say of rank $r$.

Let $a_1,\ldots, a_r \in A$ be a module-basis over $K[t_1,\ldots,t_d].$ It follows 
that $H^0(X^\text{an},\mathcal{O}_{X^\text{an}})$ is a free module over $H^0\left(\mathbb{A}^{d,\text{an}}\right)$ again with basis $a_1,\ldots,a_r.$ Thus, $f$ can be written uniquely as $f = \sum_{1\leq k \leq r} a_k\cdot g_k(\underline{t})$ with $g_k(\underline{t}) \in H^0\left(\mathbb{A}^{d,\text{an}}\right).$ To finish the proof, it suffices to show that the $g_k(\underline{t})$ have definable graphs in $K^{d+1},$ since then we may appeal to the previous \autoref{polynomial-lemma} to conclude that the $g_k$ are polynomials. By continuity, it in fact suffices to show that $g_k(\underline{t})\vert_{U}$ has a definable graph for some Zariski dense open subset $U$ of $K^d.$ Since the Noether normalization map $i : \text{Spec}(A)\rightarrow \mathbb{A}^d_{K}$ is generically \'etale, we may take $U \subseteq \mathbb{A}^d_{K}$ to be the locus over which it is \'etale. For any point $\underline{u} \in U,$ letting $i^{-1}(\underline{u}) = \{P_1,\ldots, P_r\},$ we have $r$ linear equations in $r$ variables: \[f(P_j) = \sum_{1\leq k \leq r} a_k(P_j)g_k(\underline{u})\] for each $j \in \{1,\cdots, r\}.$ Over the \'etale locus the matrix $\left(a_k(P_j)\right)_{1\leq k,j\leq r}$ is invertible and thus we may write for each $k,$ the function $g_k(u)$ as an explicit linear combination of $\{f(P_j):1\leq j \leq r\},$ with coefficients being rational function in $a_k(P_j).$ Note that permuting the ordering of the $P_j,$ leaves the specific linear combination invariant. Thus, the graph of the function $g_k :U \rightarrow K$ can be expressed as a first-order formula with all its terms using definable functions and sets-- indeed, we note that $f$ is definable by assumption, and that $U, X(K), i, a_j$ being algebraic are also definable. We thus obtain that the $g_k(\underline{u})$ are definable over $U$, concluding the proof. 

\end{proof}

\begin{lemma}\label{rigid-function-algebraic-on-dense-open-implies-algebraic}
Let $A$ be a reduced finite-type $K$-algebra and let $X = \Spec(A).$ Let $\{X_i\}$ denote the set of irreducible components of $X,$ given their reduced induced structures. Suppose $f \in H^0(X^\text{an},\mathcal{O}_{X^{\text{an}}})$ is a global rigid analytic function such that for every $i,$ there is a dense open subset $U_i \subseteq X_i$ such that $f\vert_{U_i^\text{an}} \in H^0(U_i, \mathcal{O}_{U_i}).$ Then $f \in H^0(X,\mathcal{O}_X).$

\end{lemma}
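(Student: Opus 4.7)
The strategy, as suggested by Vistoli in the acknowledgements, is faithfully flat descent. The key input is the classical fact that for any finite-type $K$-scheme $Z$ and any point $z \in Z$, the analytification morphism induces a local homomorphism of Noetherian local rings $\mathcal{O}_{Z,z} \hookrightarrow \mathcal{O}_{Z^{\mathrm{an}},z}$ with the same completion, and is therefore faithfully flat. I intend to apply this twice: first to extend $f$ algebraically along each irreducible component, then to glue across components.

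\emph{First stage.} For each $i$, restrict $f$ via the closed immersion $X_i^{\mathrm{an}} \hookrightarrow X^{\mathrm{an}}$ to obtain $f_i \in H^0(X_i^{\mathrm{an}}, \mathcal{O}_{X_i^{\mathrm{an}}})$, and pick $h_i \in H^0(U_i, \mathcal{O}_{U_i})$ with $h_i^{\mathrm{an}} = f_i|_{U_i^{\mathrm{an}}}$. Since $X_i$ is integral, at each $p \in X_i$ we may view $h_i$ as an element of $K(X_i) = \mathrm{Frac}(\mathcal{O}_{X_i,p})$. Because $U_i^{\mathrm{an}}$ meets every neighborhood of $p$ and $f_i \in \mathcal{O}_{X_i^{\mathrm{an}},p}$ extends $h_i^{\mathrm{an}}$ analytically across $p$, the germ of $h_i$ lies in $\mathcal{O}_{X_i^{\mathrm{an}},p}$. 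Writing $h_i = a/b$ with $a, b \in \mathcal{O}_{X_i,p}$, the inclusion $a \in b\,\mathcal{O}_{X_i^{\mathrm{an}},p}$ descends by faithful flatness to $a \in b\,\mathcal{O}_{X_i,p}$, so $h_i \in \mathcal{O}_{X_i,p}$. Since this holds at every $p$, $h_i$ extends to $\tilde{h}_i \in H^0(X_i, \mathcal{O}_{X_i})$. The equality $\tilde{h}_i^{\mathrm{an}} = f_i$ then follows from the identity principle on the reduced rigid space $X_i^{\mathrm{an}}$, as $\tilde{h}_i^{\mathrm{an}}$ and $f_i$ agree on the Zariski-dense admissible open $U_i^{\mathrm{an}}$.

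\emph{Second stage.} Glue the $\tilde{h}_i$ across components. For $i \neq j$, the restrictions $\tilde{h}_i|_{X_i \cap X_j}$ and $\tilde{h}_j|_{X_i \cap X_j}$ have the same analytification $f|_{(X_i \cap X_j)^{\mathrm{an}}}$, so their difference analytifies to zero. The injectivity half of faithful flatness, applied at each point of $X_i \cap X_j$, forces the difference to vanish algebraically. Since $X$ is reduced with irreducible components $\{X_i\}$, the structure sheaf $\mathcal{O}_X$ is the equalizer
\[ \mathcal{O}_X \;=\; \mathrm{eq}\Bigl( \prod_i (\iota_i)_\ast \mathcal{O}_{X_i} \rightrightarrows \prod_{i<j} (\iota_{ij})_\ast \mathcal{O}_{X_i \cap X_j} \Bigr), \]
so the pairwise compatible family $(\tilde{h}_i)$ defines a global section $\tilde{h} \in H^0(X, \mathcal{O}_X)$. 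Its analytification $\tilde{h}^{\mathrm{an}}$ agrees with $f$ on each $X_i^{\mathrm{an}}$, hence on all of $X^{\mathrm{an}}$, so $f \in H^0(X, \mathcal{O}_X)$ as required.

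I expect the main obstacle to be the extension step in the first stage: one must carefully translate the analytic fact that $f_i$ extends $h_i^{\mathrm{an}}$ across the frontier of $U_i^{\mathrm{an}}$ into the algebraic assertion that $h_i \in K(X_i)$ is regular at each $p$, and then invoke the descent identity $S \cap \mathrm{Frac}(R) = R$ for a faithfully flat local map $R \hookrightarrow S$ of Noetherian local rings. Once this is established, the gluing step becomes a routine application of the same principle to regular functions whose analytifications vanish.
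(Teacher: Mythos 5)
Your first stage is sound: the faithfully flat local map $\mathcal{O}_{X_i,p}\hookrightarrow\mathcal{O}_{X_i^{\mathrm{an}},p}$ (same completion), together with the identity principle on the reduced space $X_i^{\mathrm{an}}$, does show that each $h_i$ is regular at every point of $X_i$ and that $\tilde h_i^{\mathrm{an}}=f|_{X_i^{\mathrm{an}}}$. The genuine gap is in the second stage: the displayed equalizer identity
\[
\mathcal{O}_X=\mathrm{eq}\Bigl(\textstyle\prod_i(\iota_i)_*\mathcal{O}_{X_i}\rightrightarrows\prod_{i<j}(\iota_{ij})_*\mathcal{O}_{X_i\cap X_j}\Bigr)
\]
is false for a general reduced scheme when $X_i\cap X_j$ carries its reduced structure. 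Take $X=V\bigl(y(y-x^2)\bigr)\subseteq\mathbb{A}^2_K$, with $X_1=V(y)$ and $X_2=V(y-x^2)$ meeting tangentially at the origin, so that $(X_1\cap X_2)_{\mathrm{red}}$ is a point. The pair $(\tilde h_1,\tilde h_2)=(x,0)$ lies in the equalizer but is not the restriction of any element of $K[x,y]/(y(y-x^2))$: any $p$ vanishing on $X_2$ lies in $(y-x^2)$, hence restricts to a multiple of $x^2$ on $X_1$. Gluing along components requires agreement on the scheme-theoretic intersections (here modulo $(y,x^2)$), i.e.\ agreement to higher order where components are tangent, and your argument only extracts agreement on the reduced intersections from the equality of analytifications.

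The compatibility your $\tilde h_i$ actually enjoy is stronger than what you use --- they are all restrictions of the single analytic germ $f\in\widehat{\mathcal{O}}_{X,x}=\widehat{A_\mathfrak{m}}$, which remembers exactly this infinitesimal data --- and the paper's proof is designed to exploit that. It regards $f$ as an element of the total ring of fractions $Q(A)$ (which is what your first stage produces) and, for each maximal ideal $\mathfrak{m}$, descends the single element $f\in Q(A_\mathfrak{m})\cap\widehat{A_\mathfrak{m}}$ along the faithfully flat map $A_\mathfrak{m}\to\widehat{A_\mathfrak{m}}$ by verifying $f\otimes1=1\otimes f$ in $\widehat{A_\mathfrak{m}}\otimes_{A_\mathfrak{m}}\widehat{A_\mathfrak{m}}$; this is checked after the injection into $\widehat{L}\otimes_L\widehat{L}$ with $L=Q(A_\mathfrak{m})$, where it is obvious. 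Replacing your second stage with this localize-and-descend step (or otherwise proving agreement of the $\tilde h_i$ on the scheme-theoretic intersections and then justifying a gluing statement at that level) is what is needed to close the argument.
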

\begin{proof}
By our assumptions on $f,$ we may view $f$ as an element of the total ring of fractions $Q(A)$ of $A.$ To show that $f \in A,$ it suffices to show that for every maximal ideal $\mathfrak{m}$ of $A,$ the image of $f$ in $Q(A)_\mathfrak{m}$ (the localization of $Q(A)$ at the multiplicative set $A\setminus \mathfrak{m}$) is also in $A_\mathfrak{m}.$ Indeed, writing $f = a/s$ with $a,s \in A$ and $s$ a non-zerodivisor, if $f \notin A,$ then $a \notin sA.$ So we may choose a maximal ideal $\mathfrak{m}$ containing $(sA : a) = \{b \in A : ba \in sA\}.$ However, for this choice of $\mathfrak{m}, f \notin A_\mathfrak{m}.$ 

So let us now fix a maximal ideal $\mathfrak{m}$ of $A$. We note that $Q(A)_\mathfrak{m}$ is in fact the total ring of fractions $Q(A_\mathfrak{m})$ of $A_\mathfrak{m}.$ We also note that since $f \in H^0(X^\text{an}, \mathcal{O}_{X^\text{an}}),$ in particular, $f \in \widehat{A_\mathfrak{m}},$ since $\widehat{A_\mathfrak{m}} = \widehat{\mathcal{O}}_{X^\text{an}, \mathfrak{m}}$ (\cite[Lemma 5.1.2 (2)]{conrad1999irreducible}). For notational simplicity let $B := A_\mathfrak{m}, L := Q(B)$ and $\widehat{L} := L \otimes_{B} \widehat{B}.$ We have \emph{inclusions} $L \subseteq \widehat{L}$ and $\widehat{B} \subseteq \widehat{L}$ and $f \in L \cap \widehat{B}.$ We must show that $f \in B.$ Since $B \subseteq \widehat{B}$ is faithfully flat it suffices to show that $f \otimes 1 = 1\otimes f$ in $\widehat{B}\otimes_B \widehat{B}$ \cite[\href{https://stacks.math.columbia.edu/tag/023M}{Tag 023M}]{stacks-project}. Since $f \in L,$ the equality $f\otimes 1 = 1\otimes f$ evidently holds in $\widehat{L}\otimes_L \widehat{L}$ and we further note that $\widehat{B}\otimes_B \widehat{B}$ \emph{injects} into $(\widehat{B}\otimes_B\widehat{B})\otimes_B L = \widehat{L}\otimes_L \widehat{L}$ since $B$ injects into $L$ and $\widehat{B}\otimes_B \widehat{B}$ is $B$-flat. Hence $f \otimes 1 = 1 \otimes f$ holds in $\widehat{B} \otimes_B \widehat{B},$ as was to be shown.

\end{proof}

\begin{theorem}[A non-archimedean definable Liouville theorem]\label{upgraded-polynomial-lemma-v2}
Let $X$ be a reduced scheme of finite type over $K$ and denote by $X^\text{an}$ the rigid analytification of $X.$ Let $f \in H^0(X^\text{an},\mathcal{O}_{X^{\text{an}}})$ be a global rigid analytic function on $X^\text{an}$ such that the graph of $f$ viewed as a subset of $X(K) \times K$ is definable. Then $f \in H^0(X,\mathcal{O}_X).$
\end{theorem}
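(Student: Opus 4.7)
\medskip

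\noindent\textbf{Proof plan.}
The strategy is a straightforward devissage that reduces the general reduced case to the smooth integral case already handled in Lemma~\ref{upgraded-polynomial-lemma}, and then glues using Lemma~\ref{rigid-function-algebraic-on-dense-open-implies-algebraic}. First, I would decompose $X = X_1 \cup \cdots \cup X_r$ into its finitely many irreducible components, each endowed with its reduced induced structure so that every $X_i$ is an integral scheme of finite type over $K$. Since $K$ is algebraically closed, and hence perfect, the smooth locus $U_i \subseteq X_i$ is a Zariski open dense subscheme (and is again integral, being a nonempty open of an integral scheme).

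Next I would restrict $f$ to each $U_i^{\mathrm{an}}$. The closed immersion $X_i \hookrightarrow X$ analytifies to a closed immersion $X_i^{\mathrm{an}} \hookrightarrow X^{\mathrm{an}}$, so $f$ restricts to a global rigid analytic function on $X_i^{\mathrm{an}}$ and then to one on the open subspace $U_i^{\mathrm{an}}$. To apply Lemma~\ref{upgraded-polynomial-lemma} I must check that the graph of $f|_{U_i^{\mathrm{an}}}$ is a definable subset of $U_i(K)\times K$. This follows from the hypothesis together with elementary closure properties: the subset $U_i(K)\subseteq X(K)$ is definable (it is algebraic, hence definable by \autoref{basic-properties}), and the graph of $f|_{U_i^{\mathrm{an}}}$ is simply the intersection of the graph of $f$ with $U_i(K)\times K$, which is again definable by the Boolean algebra axiom of \autoref{def:structure-non-archimedean}. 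Lemma~\ref{upgraded-polynomial-lemma} now applies to the integral smooth scheme $U_i$ and yields $f|_{U_i^{\mathrm{an}}} \in H^0(U_i,\mathcal{O}_{U_i})$.

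Finally I would invoke Lemma~\ref{rigid-function-algebraic-on-dense-open-implies-algebraic}: we have produced, for each irreducible component $X_i$ of the reduced scheme $X$, a Zariski dense open $U_i \subseteq X_i$ such that $f|_{U_i^{\mathrm{an}}}$ is algebraic. The lemma then gives $f \in H^0(X,\mathcal{O}_X)$, completing the proof.

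There is essentially no genuine obstacle here: all the hard analytic and model-theoretic content has been absorbed into the two inputs (Lemma~\ref{upgraded-polynomial-lemma} and Lemma~\ref{rigid-function-algebraic-on-dense-open-implies-algebraic}). The only point requiring a brief verification is the transfer of definability from $X$ to the open subscheme $U_i$ of an irreducible component, and this is a formal consequence of the fact that algebraic subsets are definable together with the Boolean closure of the structure.
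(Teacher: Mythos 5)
Your proposal is correct and follows essentially the same route as the paper: restrict to the smooth dense open $U_i$ of each irreducible component, apply Lemma~\ref{upgraded-polynomial-lemma} there (checking definability of the restricted graph via Boolean closure), and conclude with Lemma~\ref{rigid-function-algebraic-on-dense-open-implies-algebraic}. The only step the paper includes that you omit is the preliminary reduction to the case where $X$ is affine (by passing to a finite affine open cover), which is needed because Lemma~\ref{rigid-function-algebraic-on-dense-open-implies-algebraic} is stated for $X = \Spec(A)$; this is a routine addition.
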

\begin{proof}
Again, by passing to a finite affine open cover we may assume that $X$ is affine. For each irreducible component $X_i$ of $X$, let $U_i \subseteq X_i$ be a dense open subset of $X_i$ that is \emph{smooth} over $K$. The restriction $f\vert_{U_i^\text{an}} \in H^0({U_i}^\text{an},\mathcal{O}_{{U_i}^\text{an}})$ is definable and hence by \autoref{upgraded-polynomial-lemma} we have that $f\vert_{U_i^\text{an}} \in H^0(U_i,\mathcal{O}_{U_i}).$ From \autoref{rigid-function-algebraic-on-dense-open-implies-algebraic} we conclude that $f \in H^0(X, \mathcal{O}_X).$ 

\end{proof}

\begin{remark}
It is clear that reducedness of the underlying variety $X$ is necessary in the above \autoref{upgraded-polynomial-lemma-v2} since the graph of a function on the underlying $K$-points doesn't record the nilpotent structure.
For example, take $X = \mathbb{A}^1_{K}[\epsilon] = \Spec(K[t,\epsilon]/(\epsilon^2)).$ Choose any function $g\in H^0(X^\text{an})$ which is not in $H^0(X)$ and take $f = \epsilon \cdot g.$ 
\end{remark}

\subsection{Proof of the non-archimedean Definable Chow theorem}

We now turn towards proving our main \autoref{affine-chow}:
\begin{theorem}
Let $X$ be a closed analytic subset of $\mathbb{A}^{n,\text{an}}_{K}$ that is also definable as a subset of $\mathbb{A}^n_{K}= K^n.$ Then $X$ is an algebraic subset i.e. $X$ is defined as the vanishing locus of a finite collection of polynomials in $K[t_1,\ldots,t_n].$
\end{theorem}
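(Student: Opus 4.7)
The plan is to induct on $d + n$ where $d := \dim(X)$, following the four-step roadmap outlined above. For the base cases: if $d = n$ then $X$ has non-empty interior in the irreducible analytic space $\mathbb{A}^{n,\mathrm{an}}_{K}$, so $X$ must equal all of $\mathbb{A}^{n,\mathrm{an}}_{K}$, which is algebraic; if $d = 0$ then $X$ is closed, discrete and definable, hence finite by \autoref{theorem-of-boundary}, and so algebraic. I thus assume $0 < d < n$.

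To find a good projection, view $X \subseteq K^n \subseteq \mathbb{P}^n(K)$; since $X$ lies in the open chart $\mathbb{A}^n(K)$, the intersection of $\mathrm{cl}_{\mathbb{P}^n(K)}(X)$ with the hyperplane at infinity $H_\infty \cong \mathbb{P}^{n-1}(K)$ is contained in $\mathrm{Fr}_{\mathbb{P}^n(K)}(X)$ and hence has dimension strictly less than $d$ by \autoref{theorem-of-boundary}. Since $\dim H_\infty = n-1 \geq d$, there exists $q \in H_\infty \setminus \mathrm{cl}_{\mathbb{P}^n(K)}(X)$. After a $K$-linear change of coordinates I take $q = [0:\cdots:0:1:0]$; then linear projection from $q$ is simply the coordinate projection $\pi : K^n \to K^{n-1}$ forgetting a chosen coordinate $t$. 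The condition $q \notin \mathrm{cl}_{\mathbb{P}^n(K)}(X)$ translates into a uniform bound on $|t|$ along $X$ in terms of the remaining coordinates, which together with the properness of $\mathbb{P}^{n-1}$ upgrades $\pi|_X$ to a \emph{finite} morphism of rigid analytic spaces. Its image $Y := \pi(X) \subseteq \mathbb{A}^{n-1,\mathrm{an}}_{K}$ is therefore a closed analytic subvariety, definable from the first-order description of coordinate projections, and of dimension $d$. Since $d + (n-1) < d + n$, the induction hypothesis yields that $Y$ is algebraic.

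Now let $U \subseteq Y$ be the étale locus of $\pi|_X : X \to Y$. As $Y$ is algebraic, $U$ is a Zariski-open subvariety of $Y$ whose complement $Y \setminus U$ is Zariski closed of dimension $< d$; consequently $\pi^{-1}(Y \setminus U) \subseteq X$ is a closed analytic definable subset of dimension $< d$, algebraic by induction. It suffices then to algebraize $V := \pi^{-1}(U^{\mathrm{an}})$. Over $U^{\mathrm{an}}$, the restriction $\pi|_V : V \to U^{\mathrm{an}}$ is finite étale of constant degree $r$, and inside the ambient $U^{\mathrm{an}} \times \mathbb{A}^{1,\mathrm{an}}_{K}$ (the $\mathbb{A}^1$ recording the forgotten coordinate $t$) the subset $V$ equals the zero locus of the monic polynomial
\[
P(T) \;=\; T^r + c_1 T^{r-1} + \cdots + c_r
\]
whose coefficients $c_i \in H^0(U^{\mathrm{an}},\mathcal{O}_{U^{\mathrm{an}}})$ are the elementary symmetric functions of the $t$-values in the fibres of $\pi|_V$.

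Each $c_i : U \to K$ has definable graph via the first-order formula asserting that $c_i(u)$ is the $i$-th elementary symmetric function of the $r$ distinct $t$-values in $\pi^{-1}(u) \cap X$. The non-archimedean definable Liouville theorem \autoref{upgraded-polynomial-lemma-v2}, applied to the reduced algebraic variety $U$ and the globally analytic definable functions $c_i$, yields $c_i \in H^0(U, \mathcal{O}_U)$, so $V$ is cut out algebraically inside $U \times \mathbb{A}^1$, completing the induction. The principal obstacles I anticipate are (i) rigorously upgrading the $K$-point statement $q \notin \mathrm{cl}_{\mathbb{P}^n(K)}(X)$ to the finiteness of $\pi|_X$ as a rigid analytic morphism (requiring a translation from topological closures of definable sets to admissible affinoid neighbourhoods in the analytification) and (ii) verifying definability of the étale locus $U$ and of the symmetric-function coefficients $c_i$, for which one exploits the constancy of fibre cardinality of $\pi|_V$ together with the closure of the tame structure under the first-order operations.
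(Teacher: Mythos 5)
Your proposal follows the paper's strategy essentially step for step (induction on $d+n$, projection from a point $q$ at infinity found via the Theorem of the Boundary, algebraization of the image by induction, étale locus, characteristic polynomial plus the definable Liouville theorem), but there are two genuine gaps, both concentrated at the étale-locus step. First, the assertion that ``as $Y$ is algebraic, $U$ is a Zariski-open subvariety of $Y$ whose complement is Zariski closed of dimension $<d$'' does not follow from the algebraicity of $Y$: the map $\pi\vert_X : X \to Y^{\mathrm{an}}$ is only a morphism of rigid analytic spaces ($X$ is precisely what we are trying to algebraize), so its non-étale locus is a priori only a closed \emph{analytic} subset of $Y^{\mathrm{an}}$. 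Zariski-closedness of that locus is itself an application of the induction hypothesis: one must first show it is definable --- the paper does this via \autoref{etale-locus}, characterizing the étale fibres as those of maximal cardinality, which requires first discarding the components of $X$ of dimension $<d$ so that $X$ is equidimensional (a reduction your proposal omits) and working over irreducible normal pieces of $Y$ --- and of dimension $<d$, and only then conclude algebraicity by induction. You flag the definability of $U$ as an anticipated obstacle, but the logical order matters: Zariski-openness of $U$ is an output of the induction, not an input.

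Second, and more seriously, nothing in your argument guarantees that the étale locus is non-empty, equivalently Zariski dense in each component of $Y$. If $U=\varnothing$ the step collapses: $W\cap Y=\varnothing$ and the characteristic-polynomial argument produces no part of $X$. In characteristic zero generic étaleness of a finite surjection of reduced spaces is automatic, but the theorem is over an arbitrary algebraically closed complete non-archimedean $K$, and in positive characteristic the projection could be everywhere ramified. The paper rules this out by imposing an extra condition on $q$ beyond $q\notin \mathrm{cl}_{\mathbb{P}^n(K)}(X)$: it fixes a countable Zariski-dense set $T$ of smooth points of $X$ and uses the Baire category theorem to choose $q$ so that the line determined by $q$ lies in no tangent space $T_t(X)$ for $t\in T$; this forces $\pi\vert_X$ to be unramified, hence étale, at some point of $X$. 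Your choice of $q$ does not secure this. The remaining steps (finiteness of $\pi\vert_X$, definability of the symmetric-function coefficients, application of \autoref{upgraded-polynomial-lemma-v2}, and the final decomposition of $X$ into the Zariski closure of $X\cap(W\times K)$ and the lower-dimensional piece $\pi^{-1}(Y\setminus U)$) agree with the paper.
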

\begin{remark}
Recall that if $\mathscr{A}$ is a rigid analytic space over $K,$ then by a \emph{closed analytic subset} $X\subseteq \mathscr{A}$ we mean that there is a closed immersion of rigid spaces $\mathscr{X} \xhookrightarrow{i} \mathscr{A}$ such that $i(\mathscr{X}) = X.$ Equivalently, $X$ is cut out by the vanishing locus of a coherent $\mathcal{O}_{\mathscr{A}}$-ideal,  or more concretely, there is an admissible affinoid covering $\mathscr{A} = \displaystyle\cup_{i \in I} U_i,$ and for each $i \in I,$ finitely many functions $f^{(i)}_1,\ldots, f^{(i)}_{n(i)}$ in $\mathcal{O}_\mathscr{A}({U_i})$ such that $X\cap U_i$ is the vanishing locus of $\{f^{(i)}_1,\ldots, f^{(i)}_{n(i)}\}.$ Moreover, we note that given a closed analytic subset $X \subseteq \mathscr{A}$ as above, there is a canonical structure of a reduced rigid analytic space that can be put on $X,$ with a canonical closed immersion $X \hookrightarrow \mathscr{A}$ (see \cite[\S 9.5.3, Proposition 4]{bosch1984non}). We shall refer to this reduced structure as the \emph{reduced induced structure} on $X.$  
\end{remark}
As was outlined earlier, the proof of the theorem shall proceed by inducting on the dimension of the definable set $X\subseteq K^n,$ (which agrees with the dimension of $X$ as an analytic space - \autoref{rigid-dimension-equals-definable-dimension}). 
First, we prove a preparatory Lemma concerning the \'etale locus of a finite morphism of rigid varieties that shall be used in the proof. 

\begin{lemma}\label{etale-locus}
Suppose $\pi : X \rightarrow Y$ is a finite surjective morphism of reduced rigid analytic varieties over $K.$ Suppose $X$ is equidimensional at every point (i.e. for all $x \in X, \, \mathrm{dim}(\mathcal{O}_{X,x}) = \mathrm{dim}(X)$) and suppose $Y$ is irreducible and normal (i.e. for all $y \in Y, \mathcal{O}_{Y,y}$ is a normal domain). Let $N$ be the generic fiber cardinality of $\pi$ (i.e. $N = \mathrm{rank}_{\mathcal{O}_Y}(\pi_*\mathcal{O}_X)$). Then for $y \in Y$, $\pi$ is \'etale at every point in the fiber of $y$ if and only if $|\pi^{-1}(y)| = N.$
\end{lemma}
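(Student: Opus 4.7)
The plan is to prove both directions. The forward implication is essentially standard: étaleness is an open condition on the source, so shrinking $Y$ around $y$ we may assume $\pi$ is étale on a neighborhood of $\pi^{-1}(y)$, whence $\pi_*\mathcal{O}_X$ is locally free of rank $N$ there and the fiber $(\pi_*\mathcal{O}_X)_y \otimes_R K$ has $K$-dimension $N$. Since $K$ is algebraically closed and an étale fiber splits as a product of residue field extensions, the fiber equals $K^N$, giving $|\pi^{-1}(y)| = N$.

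For the reverse direction the goal is to show that $|\pi^{-1}(y)| = N$ forces each local ring $\mathcal{O}_{X,x_i}$ to coincide with $\mathcal{O}_{Y,y}$ as an $\mathcal{O}_{Y,y}$-algebra, making $\pi$ a local isomorphism (hence étale) at each $x_i$. Write $R = \mathcal{O}_{Y,y}$, which is a normal local Noetherian domain of dimension $d = \dim Y$, and $A_i = \mathcal{O}_{X,x_i}$. In the rigid analytic setting, finiteness of $\pi$ lets us separate the $N$ preimages by disjoint admissible opens, so $(\pi_*\mathcal{O}_X)_y$ decomposes canonically as an $R$-algebra product $A_1 \times \cdots \times A_N$, with each $A_i$ of dimension $d$ by equidimensionality of $X$.

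Setting $L = \mathrm{Frac}(R)$ and $r_i := \dim_L(A_i \otimes_R L)$, the product decomposition gives $\sum_i r_i = \mathrm{rank}_R (\pi_*\mathcal{O}_X)_y = N$. A dimension count establishes that $R \hookrightarrow A_i$: a non-trivial kernel $I$ would force $d = \dim A_i \leq \dim R/I < d$ via finiteness over $R/I$. Consequently $r_i \geq 1$ for every $i$, and with $N$ summands summing to $N$ each $r_i$ must equal $1$. Using reducedness of $X$ and \autoref{local-ring-is-equidimensional}, every minimal prime $\mathfrak{p}_k$ of $A_i$ satisfies $\dim A_i/\mathfrak{p}_k = d$, so the same argument yields $R \hookrightarrow A_i/\mathfrak{p}_k$ and identifies $A_i \otimes_R L$ with $\prod_k F_k$ where $F_k := \mathrm{Frac}(A_i/\mathfrak{p}_k)$. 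Since $\sum_k [F_k : L] = r_i = 1$, there is a single minimal prime with $F_k = L$, and reducedness forces $A_i$ to be a domain with fraction field $L$.

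At this stage $A_i$ is a finite (hence integral) $R$-subalgebra of $L$, so the normality of $R$ (integrally closed in $L$) yields $A_i \subseteq R$; combined with the injection $R \hookrightarrow A_i$ this gives $A_i = R$. The key technical input is the semi-local decomposition $(\pi_*\mathcal{O}_X)_y \cong \prod_i A_i$, which exploits the rigid analytic topology to split the fiber; the remaining algebra is elementary once equidimensionality and normality are used at the appropriate points. Notably, flatness of $\pi$ at $y$ is never assumed but emerges as a byproduct of the equality $A_i = R$, bypassing any appeal to miracle flatness or purity of the branch locus.
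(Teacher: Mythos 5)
Your proof is correct and follows essentially the same route as the paper: the forward direction via flatness and unramifiedness along the fiber, and the converse via the semi-local decomposition $(\pi_*\mathcal{O}_X)_y \cong \prod_i \mathcal{O}_{X,x_i}$, a dimension count showing each factor contributes at least $1$ to the generic rank (using equidimensionality and \autoref{local-ring-is-equidimensional}), and normality of $\mathcal{O}_{Y,y}$ to conclude $\mathcal{O}_{X,x_i} = \mathcal{O}_{Y,y}$. The one step you take for granted is the equality $\dim X = \dim Y$: you invoke ``equidimensionality of $X$'' to get $\dim \mathcal{O}_{X,x_i} = d = \dim Y$, but equidimensionality only gives $\dim \mathcal{O}_{X,x_i} = \dim X$, and the identification of this with $\dim Y$ is exactly where surjectivity of $\pi$ enters; the paper proves it by showing that surjectivity on maximal spectra together with the Jacobson property of affinoid algebras forces $A \hookrightarrow B$ to be a finite ring inclusion. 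This fact is standard and easily supplied, so it does not affect the validity of your argument.
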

\begin{remark}
\begin{enumerate}[label=(\alph*).]
    \item We recall that a morphism of rigid spaces $\pi : X\rightarrow Y$ is said to be \'etale at  a point $x\in X$ iff the induced map on local rings $\mathcal{O}_{Y,\pi(x)} \rightarrow \mathcal{O}_{X,x}$ is flat and unramified (see \cite[\S 3]{dejong1996etale}).
    \item When we say the generic fiber cardinality is $N$ we mean that for every $y \in Y,$ we have $N = \dim_{Q(\mathcal{O}_{Y,y})}\left((\pi_*\mathcal{O}_X)_y\otimes_{\mathcal{O}_{Y,y}} Q(\mathcal{O}_{Y,y})\right)$. Here $Q(\mathcal{O}_{Y,y})$ denotes the fraction field of the domain $\mathcal{O}_{Y,y}.$ Since $Y$ is connected, the dimension on the right hand side is indeed independent of the point $y.$ To see this, it suffices to work over a connected affinoid open $\Sp(A)$ of $Y.$ Then $A$ must be a domain, and since $\pi$ is a finite map, $\pi^{-1}(\Sp(A))$ is an affinoid open $\Sp(B)$ of $X$ with the induced map $A\rightarrow B$ making $B$ a finite $A$-module. For a point $y \in \Sp(A)$ corresponding to the maximal ideal $\mathfrak{m}$ of $A,$ we have that $\dim_{Q(\mathcal{O}_{Y,y})}\left((\pi_*\mathcal{O}_X)_y\otimes_{\mathcal{O}_{Y,y}} Q(\mathcal{O}_{Y,y})\right) = \dim_{Q(\mathcal{O}_{Y,y})} \left(B\otimes_A Q(\mathcal{O}_{Y,y}) \right)$ $= \dim_{Q(A)} B\otimes_A Q(A).$ 
\end{enumerate}\end{remark}
\begin{proof}[Proof of \autoref{etale-locus}] By working locally over connected affinoid opens of $Y$, we may assume that $Y = \Sp(A)$ is affinoid. Since $\pi$ is a finite morphism, $X$ is also affinoid, and $X = \Sp(B)$ with the induced map $A\rightarrow B$ making $B$ a finite $A$-module. The assumptions on $Y$ imply that $A$ is a normal integral domain. Let $F$ denote its fraction field and let $N = \dim_F(B\otimes_A F)$ be the generic fiber cardinality of $\pi.$ 

For a point $x \in X,$ if we denote the maximal ideals corresponding to $x, \pi(x)$ by $\mathfrak{n}\subseteq B, \mathfrak{m}\subseteq A$ respectively, then we note that since $\widehat{A}_\mathfrak{m} = \widehat{\mathcal{O}}_{Y,\pi(x)}$ and $\widehat{B}_\mathfrak{n} = \widehat{\mathcal{O}}_{X,x}$ the map $\mathcal{O}_{Y,\pi(x)}\rightarrow \mathcal{O}_{X,x}$ is flat and unramified if and only if the same holds for the map $A_\mathfrak{m} \rightarrow B_\mathfrak{n}$ (the fact that both maps are unramified simultaneously is easy to see, whereas for flatness one may use the local flatness criterion \cite[Theorems 22.1 and 22.4]{matsumura1989commutative}).

Suppose now $y \in Y$ is a point corresponding to the maximal ideal $\mathfrak{m}$ of $A$ such that $\pi$ is \'etale at every point of $\pi^{-1}(y).$ Then from the above $B/\mathfrak{m}B$ must be unramified over $A/\mathfrak{m}$ and thus $|\pi^{-1}(y)| = \dim_{A/\mathfrak{m}} B/\mathfrak{m}B.$ Similarly, it folows that $B\otimes_A A_\mathfrak{m}$ is finite flat(hence free) over $A_\mathfrak{m}$ and hence $\mathrm{rank}_{A_\mathfrak{m}}(B\otimes_A A_\mathfrak{m}) = \dim_{A/\mathfrak{m}} B/\mathfrak{m}B = \dim_K(B\otimes_A F) = N.$ Therefore, we see that $|\pi^{-1}(y)| = N.$

Before turning to prove the converse direction, we first show that $\dim(X) = \dim(Y).$ By \autoref{affinoid's-dimension}, $\dim(X) = \dim(B)$ and $\dim(Y) = \dim(A).$ Since $\pi : \mathrm{MaxSpec}(B) \rightarrow \mathrm{MaxSpec}(A)$ is surjective, the image of $\Spec(B) \rightarrow \Spec(A)$ contains all the closed points of $\Spec(A).$ If $I$ denotes the kernel of $A\rightarrow B,$ then $A/I \hookrightarrow B$ is a finite \emph{inclusion} of rings, and so by the Going Up Theorem, the image $\Spec(B)\rightarrow \Spec(A)$ is the vanishing locus $V(I)$. Thus, we must have $V(I)\supseteq \mathrm{MaxSpec}(A)$. However since $A$ is Jacobson (by \cite[\S 5.2.6, Theorem 3]{bosch1984non}), this implies $V(I) = \Spec(A)$ and hence $I = {0}.$ Thus, $A\hookrightarrow B$ is a finite inclusion of rings and therefore $\dim(A) = \dim(B).$

Now suppose $y \in Y$ is a point such that $|\pi^{-1}(y)| = N.$ Let $\pi^{-1}(y) = \{x_1,\ldots, x_N\}.$ We would like to show that $\pi$ is \'etale at each $x_i.$ We have a canonical isomorphism $B \otimes_A \mathcal{O}_{Y,y}=(\pi_*\mathcal{O}_X)_y \cong \prod_{i=1}^N \mathcal{O}_{X,x_i}$ (see \cite[pp. 481-482]{conrad1999irreducible}). If $L$ denotes the fraction field of $\mathcal{O}_{Y,y},$ we have that $B\otimes_A L \cong \prod_{i=1}^N \mathcal{O}_{X,x_i}\otimes_{\mathcal{O}_{Y,y}} L$. 

\emph{Subclaim: For each $i$, the natural map $\mathcal{O}_{X,x_i}\rightarrow \mathcal{O}_{X,x_i}\otimes_{\mathcal{O}_{Y,y}} L$ is injective.}  
\begin{proof}[Proof of Subclaim:]
Note that $\mathcal{O}_{X,x_i}\otimes_{\mathcal{O}_{Y,y}} L$ is the localisation of $\mathcal{O}_{X,x_i}$ at the (image inside $\mathcal{O}_{X,x_i}$ of the) multiplicative set $\mathcal{O}_{Y,y}\setminus\{0\}.$ Thus, the claim is equivalent to showing that $\mathcal{O}_{X,x_i}$ is a torsion-free $\mathcal{O}_{Y,y}$-module. Equivalently, we must show that the image of $\mathcal{O}_{Y,y}\setminus\{0\}$ inside $\mathcal{O}_{X,x_i}$ does not contain any zero-divisors of the ring $\mathcal{O}_{X,x_i}.$ Since $\mathcal{O}_{X,x_i}$ is a reduced ring, the set of zero-divisors of $\mathcal{O}_{X,x_i}$ is the union of the minimal prime ideals of $\mathcal{O}_{X,x_i}$ (\cite[\href{https://stacks.math.columbia.edu/tag/00EW}{Tag 00EW}]{stacks-project}). Therefore, it suffices to prove that every minimal prime ideal $\mathfrak{q}$ of $\mathcal{O}_{X,x_i}$ contracts to the $(0)$-ideal of $\mathcal{O}_{Y,y}.$ If we set $\mathfrak{q}\cap \mathcal{O}_{Y,y} =: \mathfrak{p},$ then $\mathcal{O}_{Y.y}/\mathfrak{p} \hookrightarrow \mathcal{O}_{X,x_i}/\mathfrak{q}$ a finite \emph{inclusion} of domains and hence, $\dim(\mathcal{O}_{Y.y}/\mathfrak{p}) = \dim(\mathcal{O}_{X,x_i}/\mathfrak{q}).$ We now have the chain of equalities: \[\dim(\mathcal{O}_{Y.y}/\mathfrak{p}) = \dim(\mathcal{O}_{X,x_i}/\mathfrak{q}) = \dim(X) = \dim(Y) = \dim(\mathcal{O}_{Y,y})\]
where the second equality is from \autoref{local-ring-is-equidimensional}, the third from the previous paragraphs and the last from \autoref{integral-rigid-spaces-are-equidimensional}. But since $\mathcal{O}_{Y,y}$ is an integral domain the equality $\dim(\mathcal{O}_{Y.y}/\mathfrak{p})=\dim(\mathcal{O}_{Y.y})$ is only possible if $\mathfrak{p} = (0)$. This completes the proof of the subclaim.
\end{proof} 


The subclaim shows in particular, that $\mathcal{O}_{X,x_i}\otimes_{\mathcal{O}_{Y,y}} L$ must be non-zero for each $i$. But since, $\dim_L(B\otimes_A L) = N,$ this is only possible if $L = \mathcal{O}_{X,x_i}\otimes_{\mathcal{O}_{Y,y}} L$ for each $i.$ In particular, $\mathcal{O}_{X,x_i}\subseteq L.$ However, since $\mathcal{O}_{Y,y}$ is a normal domain, and since $\mathcal{O}_{X,x_i}$ is finite over $\mathcal{O}_{Y,y},$ we get that $\mathcal{O}_{Y,y} = \mathcal{O}_{X,x_i},$ and so $\pi$ is evidently \'etale at $x_i.$
\end{proof}

\begin{proof}[Proof of \autoref{affine-chow}]
We induct on $d+n$ where $d := \mathrm{dim}(X)$. If $d = 0,$ then $X$ is finite, hence algebraic. And if $d = n,$ then $X = K^n$ and we're done.

Suppose that $n > d \geq 1.$ It follows from \autoref{rigid-dimension-equals-definable-dimension} that the set $S = \{x \in X: \dim(\mathcal{O}_{X,x}) < d\}$ is a \emph{definable} subset of $K^n.$ Moreover, the closure of $S$ in $K^n, \mathrm{cl}(S)$ is the union of the irreducible components of $X$ of dimension $< d,$ and so by the induction hypothesis, $\mathrm{cl}(S)$ is an algebraic subset of $K^n.$ It suffices to then show that $X\setminus S$ is an algebraic subset of $K^n.$ Note that $X\setminus S$ is the union of the irreducible components of dimension $d$ and therefore $X \setminus S$ is a closed, analytic subset of $K^n$. Thus, replacing $X$ by $X\setminus S$ we may assume that $X$ is \emph{equidimensional} of dimension $d.$ 

\emph{Finding a point $q \in \mathbb{P}^n(K)\setminus \mathrm{cl}(X)$:} Embed $K^n \subseteq \mathbb{P}^n(K)$ inside projective $n$-space and denote the homogeneous coordinates of $\mathbb{P}^n(K)$ by $Z_1,\ldots, Z_{n+1}$. Let $\mu$ denote the point $[1:0:\ldots:0] \in \mathbb{P}^n(K)\setminus K^n,$ and consider the neighbourhood $\Delta:= \{|Z_1|\geq |Z_{2}|, \ldots, |Z_1|\geq |Z_{n+1}|\} \subseteq \mathbb{P}^n(K)$ of the point $\mu.$ The neighbourhood $\Delta$ is naturally homeomorphic to the closed unit $n$-dimensional disk, $(K^\circ)^{n},$ via the map $\varphi : \Delta \rightarrow (K^\circ)^n$ that sends $[Z_1:\ldots:Z_{n+1}] \mapsto \left(\frac{Z_2}{Z_1},\ldots,\frac{Z_{n+1}}{Z_1}\right)$ and $S := \varphi(X\cap \Delta)$ is a definable subset of $(K^\circ)^n$ of dimension $\leq d$ contained in $(K^\circ)^{n-1}\times K^\circ\setminus\{0\}.$ We note that since $\mathrm{cl}(S)\cap (K^\circ)^{n-1}\times \{0\} \subseteq \mathrm{Fr}(S),$ and from the Theorem of the Boundary (\autoref{theorem-of-boundary}), since $\dim(\mathrm{Fr}(S)) < d \leq (n-1)$ we can find a point $q \in (K^\circ)^{n-1}\times \{0\}$ such that $q \notin \mathrm{cl}(S),$ and pulling back via $\varphi$ to $\Delta,$ we find a point $q \in \mathbb{P}^n(K)\setminus K^n$ such that $q \notin \mathrm{cl}_{\mathbb{P}^n(K)}(X).$
The point $q \in \mathbb{P}^n(K)\setminus K^n = \mathbb{P}^{n-1}(K)$ defines a line in $K^n.$ 
Now let $T \subseteq X(K)$ be a countable subset of $X(K)$ consisting of \emph{smooth} points of $X$ such that the only closed analytic subset of $X$ containing $T$ is $X$ itself. By the Baire category theorem we may further assume that for every $t \in T$, the line defined by the point $q$ is not contained in the tangent space $T_t(X)$. 

Consider any $(n-1)$-dimensional linear subspace $\mathbb{H} \subseteq K^n,$ not containing the line defined by $q,$ and let $\pi : K^n \rightarrow \mathbb{H}$ denote the projection onto $\mathbb{H}$ with kernel being the line defined by $q.$ We are free to make a linear change of coordinates on $K^n,$ and so we may even assume for simplicity that $q = [0:\ldots:0:1] \in \mathbb{P}^{n-1}(K)$ and that $\pi : K^n \rightarrow K^{n-1}$ is the projection to the first $(n-1)$-coordinates. 
\begin{lemma}
The projection $\pi \vert_{X} : X \rightarrow K^{n-1}$ is a finite morphism of rigid analytic spaces (endowing $X$ with the reduced induced structure).
\end{lemma}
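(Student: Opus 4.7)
The plan is to convert the topological separation $q \notin \mathrm{cl}_{\mathbb{P}^n(K)}(X)$ into a boundedness of the last coordinate on preimages of affinoid disks under $\pi$, and then to verify finiteness on this admissible cover via Weierstrass preparation. In the affine chart of $\mathbb{P}^n$ around $q = [0:\ldots:0:1:0]$, with local coordinates $w_i = Z_i/Z_n$ for $i \neq n$, the hypothesis $q \notin \mathrm{cl}_{\mathbb{P}^n(K)}(X)$ produces $\delta > 0$ such that the closed polydisk $\{|w_i| \leq \delta : i \neq n\}$ is disjoint from $X$. Translating back to the affine coordinates $(x_1,\ldots,x_n) \in K^n$, this says that for every $(x_1,\ldots,x_n) \in X$,
\[
|x_n| < \delta^{-1}\max\bigl(1,|x_1|,\ldots,|x_{n-1}|\bigr).
\]

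For $s \in |K^\times|$ with $s \geq 1$, let $D_s \subseteq K^{n-1}$ denote the closed polydisk of polyradius $s$; the family $\{D_s\}$ forms an admissible affinoid cover of $\mathbb{A}^{n-1,\mathrm{an}}_K$. Using the density of $|K^\times|$ in $\mathbb{R}_{>0}$, pick $M_s \in |K^\times|$ with $M_s > s/\delta$. The boundedness above forces $U_s := \pi^{-1}(D_s) \cap X$ to sit inside the affinoid $D_s \times \{|z_n| \leq M_s\}$ as a closed analytic subspace, so $U_s$ is itself an affinoid (with the reduced induced structure); moreover the supremum of $|z_n|$ on $U_s$ is at most $s/\delta$, and hence is strictly less than $M_s$.

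Set $A_s := \mathcal{O}(D_s)$ and $B_s := \mathcal{O}(U_s)$, and consider the surjection $A_s\langle T \rangle \twoheadrightarrow B_s$ sending $T$ to $z_n/M_s$. The image of $T$ has sup-norm $\sigma < 1$ on $U_s$; since $B_s$ is reduced, its sup-norm is equivalent to any residue norm on $B_s$, so $\|T^k\|_{\mathrm{res},B_s} \to 0$. Hence for $k$ large enough one may find $g$ in the kernel with $\|T^k - g\|_{\mathrm{Gauss}} < 1$. By the non-archimedean triangle inequality the coefficient of $T^k$ in $g$ has norm exactly $1$ while all other coefficients have norm strictly less than $1$; Weierstrass preparation for $A_s\langle T\rangle$ then factors $g$ as a unit times a monic polynomial $\omega \in A_s[T]$ of degree $k$. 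As $g$ lies in the kernel, so does $\omega$, exhibiting $B_s$ as a quotient of the $A_s$-free module $A_s[T]/(\omega)$; thus $B_s$ is finite over $A_s$.

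Finiteness on each affinoid of an admissible cover of the target yields the global finiteness of $\pi\vert_X$. The principal technical input is the Weierstrass preparation step over the affinoid base $A_s$ in the ``strongly $T$-distinguished'' regime produced above; alternatively, one could invoke Kiehl's theorem on proper morphisms with finite fibers, using that quasi-finiteness of the fibers (the closure in $\mathbb{P}^n$ of any vertical line inside $X$ would pass through $q$, which is excluded) combined with the ``inner affinoid subdomain'' structure of $U_s$ inside $D_s \times \{|z_n| \leq M_s\}$ verifies Kiehl's properness criterion.
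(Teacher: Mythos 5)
Your proof is correct, and it takes a genuinely different route from the paper's. The paper splits the statement into quasi-finiteness plus properness: quasi-finiteness of the fibers uses definability (a zero-dimensional \emph{definable} set is finite, whereas a zero-dimensional closed analytic subset of a line need not be), and properness is checked on the associated Berkovich spaces (topological properness together with boundarylessness of $\mathbb{A}^{n,\mathrm{Berk}}_{K}\rightarrow\mathbb{A}^{n-1,\mathrm{Berk}}_{K}$), after which finiteness follows from the theorem that a proper rigid morphism with finite fibers is finite. You instead convert $q\notin\mathrm{cl}_{\mathbb{P}^n(K)}(X)$ into the quantitative bound $|x_n|<\delta^{-1}\max(1,|x_1|,\ldots,|x_{n-1}|)$ on $X$, which realizes $\pi\vert_X^{-1}(D_s)$ as a closed analytic subspace of the affinoid $D_s\times\{|z_n|\le M_s\}$ on which $z_n/M_s$ has sup-norm strictly less than $1$, and then produce a monic relation for $z_n$ over $\mathcal{O}(D_s)$ by finding a $T$-distinguished element of the kernel and applying Weierstrass preparation; finiteness checked on the standard admissible cover $\{D_s\}$ of $\mathbb{A}^{n-1,\mathrm{an}}_K$ gives the global statement by the very definition of a finite morphism. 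Your route is more elementary (no Berkovich spaces, no proper-plus-quasi-finite-implies-finite) and, notably, uses only the topological separation of $q$ from $\mathrm{cl}(X)$ and not the definability of $X$. Two minor remarks: the appeal to reducedness of $B_s$ can be avoided, since in any affinoid algebra $\lVert b\rVert_{\sup}<1$ already forces $b$ to be topologically nilpotent for every residue norm; and in your alternative Kiehl-based sketch, excluding vertical lines only rules out one-dimensional fibers, so to see that the zero-dimensional fibers are actually finite you would still need your boundedness estimate (or, as in the paper, definability).
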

\begin{proof}
\emph{$\pi\vert_X$ is quasi-finite:} Indeed, for $\underline{z} \in K^{n-1},$ $\pi^{-1}(\underline{z})\cap X$ is a closed analytic subset of the 1-dimensional line $\pi^{-1}(\underline{z})$ and is in addition definable. If $\dim(\pi^{-1}(\underline{z})\cap X) = 1,$ then $\pi^{-1}(\underline{z}) \subseteq X,$ which would imply that $q = [0:\ldots:0:1] \in \mathrm{cl}_{\mathbb{P}^n(K)}(X)$ contradicting our choice of $q.$ Thus, $\dim(\pi^{-1}(\underline{z})\cap X) = 0,$ i.e. $\pi^{-1}(\underline{z})\cap X$ is finite.

To show that $\pi\vert_X$ is a finite morphism, it thus remains to show that $\pi\vert_X$ is a proper morphism of rigid spaces (\cite[\S 9.6.3, Corollary 6]{bosch1984non}). In order to prove this, we consider the map $\pi\vert_X$ on the level of the associated Berkovich spaces. Note that $X$ being a closed analytic subvariety of rigid affine $n$-space, is a quasi-separated rigid space and has an admissible affinoid covering of finite type, and moroeover its associated Berkovich analytic space is `good' in the sense of \cite[Remark 1.2.16 \& \S 1.5]{berkovich1993etale}. Recall that the morphism $\pi\vert_{X} : X^\mathrm{Berk} \rightarrow \mathbb{A}^{n-1,\mathrm{Berk}}_{K}$ of good $K$-analytic spaces is proper if it is topologically proper and boundaryless (or `compact and closed' in the terminology of \cite[pp. 50]{berkovich1990spectral}). 

\emph{$\pi\vert_X$ is separated and topologically proper:} 
$\pi\vert_X$ is indeed separated. If $E(0,\underline{r})$ denotes the closed polydisc of polyradius $\underline{r}$ in $\mathbb{A}^{n-1,\mathrm{Berk}}_{K},$ i.e.  $E(0,\underline{r})=\mathcal{M}(K\{r_1^{-1}T_1, \ldots ,  r_{n-1}^{-1}T_{n-1}\}),$  then we claim that $\pi^{-1}(E(0,\underline{r}))\cap X^\mathrm{Berk}$ is bounded in $\mathbb{A}^{n, \mathrm{Berk}}_{K}.$ If it weren't, there would be a sequence of points $x_i \in \pi^{-1}(E(0,\underline{r}))\cap X^\mathrm{Berk}$ with $|T_n(x_i)| \rightarrow \infty$ as $i \rightarrow\infty.$ We may even find a sequence $x_i \in X$ since by \cite[Proposition 2.1.15]{berkovich1990spectral}, the set of rigid points is everywhere dense. But this would again imply that $q = [0:\ldots:0:1] \in \mathrm{cl}_{\mathbb{P}^n(K)}(X)$ contradicting the choice of $q.$ Since every compact subset of $\mathbb{A}^{n-1,\mathrm{Berk}}_{K}$ is contained in some $E(0,\underline{r}),$ it follows that the inverse image of compact sets under the map $\pi\vert_X : X^\mathrm{Berk} \rightarrow \mathbb{A}^{n-1,\mathrm{Berk}}_{K}$ are compact. Thus, $\pi\vert_X$ is topologically proper.

\emph{\ $\pi\vert_X$ is boundaryless:} Since $X^\mathrm{Berk} \hookrightarrow \mathbb{A}^{n,\mathrm{Berk}}_{K}$ is a closed immersion, $\mathrm{Int}(X^\mathrm{Berk}/\mathbb{A}^{n,\mathrm{Berk}}_{K}) = X^\mathrm{Berk}.$ By \cite[Proposition 3.1.3 (ii)]{berkovich1993etale} it suffices to note that $\mathrm{Int}(\mathbb{A}^{n,\mathrm{Berk}}_{K}/\mathbb{A}^{n-1,\mathrm{Berk}}_{K}) = \mathbb{A}^{n,\mathrm{Berk}}_{K}.$ To see this last equality, for any $x\in \mathbb{A}^{n,\mathrm{Berk}}_{K}$ let $y = \pi(x),$ and choose an affinoid neighbourhood $E(0,\underline{r}) \subseteq \mathbb{A}^{n-1,\mathrm{Berk}}_{K}$ containing $y$ in its interior. Choosing an $R \in |K^\times|$ with $R > |T_n(x)|$, we see that $\chi_x : K\{r_1^{-1}T_1,\ldots,r_{n-1}^{-1}T_{n-1},R^{-1}T_n\} \rightarrow \mathcal{H}(x)$ is inner over $K\{r_1^{-1}T_1,\ldots,r_{n-1}^{-1}T_{n-1}\},$ i.e. $x \in \mathrm{Int}(\mathbb{A}^{n,\mathrm{Berk}}_{K}/\mathbb{A}^{n-1,\mathrm{Berk}}_{K}).$ 

Therefore, the map $\pi\vert_X : X^\mathrm{Berk} \rightarrow \mathbb{A}^{n-1,\mathrm{Berk}}_{K}$ is proper and hence by \cite[Proposition 3.3.2]{berkovich1990spectral}, so is $\pi\vert_X : X \rightarrow \mathbb{A}^{n-1,\mathrm{an}}_{K}.$ 
\end{proof}

Since $\pi\vert_X : X \rightarrow \mathbb{A}^{n-1,\mathrm{an}}_{K}$ is finite, the image $Y := \pi(X)$ is a closed analytic subvariety of $K^{n-1},$ by \cite[\S 9.6.3, Proposition 3]{bosch1984non}. In addition, as $Y$ is a definable subset, by the induction hypothesis $Y$ is an algebraic subset of $K^{n-1}.$ Endowing $Y$ with its structure as a reduced closed affine algebraic subvariety of $\mathbb{A}^{n-1}_{K}$, the morphism $\pi\vert_X$ gives rise to a finite, surjective morphism of rigid analytic spaces $\pi\vert_X : X \rightarrow Y^\mathrm{an}.$   

\begin{lemma}\label{etale}
There is a Zariski dense open $U \subseteq Y,$ such that $\pi\vert_X^{-1}(U^\mathrm{an}) \rightarrow U^\mathrm{an}$ is a finite, \'etale surjection of rigid varieties. 
\end{lemma}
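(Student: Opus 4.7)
The plan is to identify a subset $R \subseteq Y$ that is simultaneously definable in the tame structure, closed analytic in $\mathbb{A}^{n-1,\mathrm{an}}_K$, and of dimension strictly less than $d$; the induction hypothesis of \autoref{affine-chow} (applied in ambient dimension $n-1$ with $\dim R + (n-1) < d + n$) will then force $R$ to be algebraic, and $U := Y \setminus R$ will be the desired Zariski dense open. To set up $R$, I would first pass to the smooth locus $V := Y^{\mathrm{sm}}$, which is Zariski open and dense in $Y$ because $Y$ is reduced and of finite type over the perfect field $K$. The connected components $V_i$ of $V$ coincide with its irreducible components, each of which is smooth (hence normal) and irreducible. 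Moreover every irreducible component of $Y$ has dimension $d$: since $\pi\vert_X$ is a finite surjection from the equidimensional $X$ of dimension $d$, each component of $Y$ is the image of a $d$-dimensional component of $X$ and hence has dimension $d$ by finiteness. For each $V_i$, let $N_i$ denote the generic fiber cardinality of $\pi\vert_X : \pi\vert_X^{-1}(V_i^{\mathrm{an}}) \to V_i^{\mathrm{an}}$; by \autoref{etale-locus}, the étale locus on $V_i^{\mathrm{an}}$ is exactly $\{y : |\pi\vert_X^{-1}(y)| = N_i\}$.

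Set $R_i := \{y \in V_i : |\pi\vert_X^{-1}(y)| < N_i\}$ and $R := \bigcup_i R_i \cup (Y \setminus V)$. I would then verify three properties. For \emph{definability}: each $R_i$ is cut out by a first-order existential condition on the definable set $X$ and the algebraic (hence definable) map $\pi$, and adjoining the algebraic set $Y \setminus V$ preserves definability. For \emph{closed analyticity in} $\mathbb{A}^{n-1,\mathrm{an}}_K$: the set $R$ coincides with $B \cup (Y \setminus V)^{\mathrm{an}}$, where $B \subseteq Y^{\mathrm{an}}$ is the branch locus of $\pi\vert_X$ — closed analytic as the $\pi$-image, under the finite (hence closed) map $\pi$, of the ramification locus of $X$ — and $(Y \setminus V)^{\mathrm{an}}$ is the analytification of a closed algebraic subset; hence $R$ is closed analytic in $Y^{\mathrm{an}}$, and therefore in $\mathbb{A}^{n-1,\mathrm{an}}_K$. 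For the \emph{dimension bound}: the earlier choice of the projection center $q$ was made precisely so that the differential of $\pi\vert_X$ at each point of the countable dense set $T$ of smooth points of $X$ is injective, and density of $\pi(T)$ in each component of $Y$ provides a point of each $V_i^{\mathrm{an}}$ at which $\pi\vert_X$ is étale; generic étaleness of $\pi\vert_X$ over the irreducible $V_i^{\mathrm{an}}$ then forces $R_i$ to be a proper closed analytic subset, giving $\dim R_i < d$. Combined with $\dim(Y \setminus V) < d$, this yields $\dim R < d$.

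With these three properties, the induction hypothesis of \autoref{affine-chow} gives that $R$ is algebraic, and $U := Y \setminus R$ is a Zariski open subset of $Y$ over which $\pi\vert_X$ is étale by \autoref{etale-locus}. Density of $U$ in $Y$ is automatic from $\dim R < d = \dim Y_j$ for every irreducible component $Y_j$ of $Y$. The main obstacle I anticipate is coordinating the definable description of $R$ (fiber-counting on $V$) with its analytic description (branch locus on all of $Y^{\mathrm{an}}$); the algebraic singular locus $Y \setminus V$ acts as a buffer that lets both descriptions hold on the same set, so that $R$ appears simultaneously as a definable closed analytic subset of the ambient affine space and is thereby amenable to the induction on $d + n$.
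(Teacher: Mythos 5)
Your proposal is correct and follows essentially the same route as the paper: both arguments reduce to \autoref{etale-locus} to express the non-\'etale locus over each normal irreducible piece of $Y$ as the definable, closed analytic set of points whose fiber cardinality falls below the generic value, observe that it has dimension $<d$ (using the choice of $q$ and the countable set $T$), and invoke the induction hypothesis of \autoref{affine-chow} to conclude it is algebraic, hence that its complement is Zariski dense open. The only cosmetic difference is that you work over the smooth locus $Y^{\mathrm{sm}}$ and assemble a single exceptional set $R$ inside $K^{n-1}$, whereas the paper works over principal opens $U_i$ of the irreducible components (viewed as closed subvarieties of $K^{n}$) and takes $U=\bigcup_i E_i$; your choice has the small advantage of making the normality hypothesis of \autoref{etale-locus} explicit.
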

\begin{proof}
We first claim that the map $\pi\vert_X : X \rightarrow Y^\mathrm{an}$ is \'etale at at least one point of $X$. If the characteristic of $K$ is zero this is immediate. In the general case, note that $\pi(T) \subseteq Y$ is not contained in any analytic subset of $Y^\mathrm{an}$ and in particular, there is a $t_0 \in T$ such that $\pi(t_0)$ is a smooth point of $Y$. Then note that by our choice of $q$ earlier, we have that the map $\pi\vert_X : X \rightarrow Y^\mathrm{an}$ is injective on the level of tangent spaces $T_{t_0}(X) \hookrightarrow T_{\pi(t_0)}(Y^\mathrm{an}).$ Therefore, applying \cite[Ch 4, Corollary 3.27]{liu2002algebraic} we see that $\pi\vert_X$ is \'etale at $t_0$.

Let $\{Y_i\}_{1\leq i \leq r}$ denote the finitely many irreducible components of $Y,$ thus $\{Y_i^\mathrm{an}\}_{1\leq i \leq r}$ being those of $Y^\mathrm{an}.$ Let $U_i \subseteq Y_i \setminus \bigcup_{j \neq i} Y_j,$ be a non-empty, \emph{principal} open subset of $Y$ so that each $U_i$ is an integral (reduced and irreducible) open subvariety of $Y,$ (hence $U_i^\mathrm{an}$ is a reduced and irreducible admissible open of $Y^\mathrm{an}$ \cite[Theorem 5.1.3 (2)]{conrad1999irreducible}). $U_i$ being a principal open subset of $Y \subseteq K^{n-1}$, may be viewed as a closed affine subvariety of $K^n.$ By \autoref{etale-locus}, the \'etale locus $E_i \subseteq U_i^\mathrm{an}$ of $\pi : \pi^{-1}(U_i^\mathrm{an}) \rightarrow U_i^\mathrm{an}$ is definable as it may be defined using a first-order formula expressing $E_i$ as the subset of points in $U_i^\mathrm{an}$ whose fiber under $\pi$ has cardinality equal to the generic fiber cardinality over $U_i^\mathrm{an}$. Moreover, the complement $U_i^\mathrm{an}\setminus E_i$ is a closed analytic subvariety of $U_i^\mathrm{an}\subseteq K^n$ of dimension $< \dim(U_i) = d.$ By the induction hypothesis, $U_i^\mathrm{an}\setminus E_i$ is a Zariski closed algebraic subset of $U_i$ and hence $E_i$ is a Zariski dense open of $U_i.$ Now setting $U = \cup_i E_i,$ completes the proof of the Lemma.
\end{proof}
 
Let $U$ be as in \autoref{etale} above, let $\{U_j\}$ be the finitely many open connected components of $U$ and let $V_j := \pi\vert_X^{-1}(U_j).$ Suppose the fiber cardinality of $\pi\vert_{V_j} : \pi\vert_X^{-1}(U_j) \rightarrow U_j$ is $N_j.$ The characteristic polynomial of $T_n\vert_{V_j}$ over $U_j$ (here $T_n$ being the last coordinate function of $\C_p^n$) is a polynomial of degree $N_j$ with coefficients in $\mathcal{O}_{Y^\mathrm{an}}(U_j^\mathrm{an}).$ Moreover, since the $U_j$ are Zariski opens and since $X$ is definable it follows that the coefficients are also definable since they may be defined as symmetric polynomials in the fibers of $\pi\vert_X.$ Hence, by \autoref{upgraded-polynomial-lemma-v2} the characteristic polynomial in fact has coefficients in $\mathcal{O}_{Y}(U_j).$ If $W \subseteq K^{n-1}$ is a Zariski open subset such that $W \cap Y = U,$ then it follows from the above that $X \cap (W\times K)$ is a closed algebraic subset of $W \times K.$ 

If we let $Z$ denote the Zariski closure of $X \cap (W\times K)$ in $K^n,$ then $Z$ is also the closure of $X \cap (W \times K)$ in the metric topology of $K^n,$ (\cite[Theorem 5.1.3 (2)]{conrad1999irreducible}) and hence $Z \subseteq X.$ Moreover, $X\setminus (W\times K) = \pi\vert_X^{-1}(Y\setminus U)$ and so $\dim(X\setminus (W\times K)) < \dim(Y)\leq d.$ By the induction hypothesis $X\setminus (W\times K)$ is thus a closed algebraic subset of $K^n$ and since $X = Z \cup (X\setminus (W\times K))$, we get that $X$ is algebraic, finishing the Proof of \autoref{affine-chow}.
\end{proof}

We obtain as a corollary:

\begin{corollary}\label{definable-chow-theorem}
Let $V$ be a reduced algebraic variety over $K,$ and let $X \subseteq V^\mathrm{an}$ be a closed analytic subvariety of the rigid analytic variety $V^\mathrm{an}$ associated to $V$, such that $X \subseteq V(K)$ is definable in a tame structure on $K$. Then $X$ is algebraic.
\end{corollary}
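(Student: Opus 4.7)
The plan is to reduce the corollary to the affine case Theorem \ref{affine-chow} by working locally on a finite affine open cover of $V$, and then to glue the resulting local algebraic subsets back into a global one. Since $V$ is of finite type over $K$, I would first choose a finite affine open cover $V = \bigcup_i U_i$ together with closed embeddings $\beta_i : U_i \hookrightarrow \mathbb{A}^{n_i}_K$ arising from surjections $K[t_1,\ldots,t_{n_i}] \twoheadrightarrow \mathcal{O}(U_i)$. By Definition \ref{definable-in-varieties}, the hypothesis that $X \subseteq V(K)$ is definable entails that each image $\beta_i(X \cap U_i(K)) \subseteq K^{n_i}$ is a definable subset in the tame structure on $K$. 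On the analytic side, the closed immersion $X \cap U_i^{\mathrm{an}} \hookrightarrow U_i^{\mathrm{an}}$ (of reduced rigid spaces) composed with the analytification of $\beta_i$ realizes $X \cap U_i^{\mathrm{an}}$ as a closed analytic subvariety of $\mathbb{A}^{n_i,\mathrm{an}}_K$ whose underlying $K$-points are precisely $\beta_i(X \cap U_i(K))$.

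Each $X \cap U_i^{\mathrm{an}}$ is therefore in exactly the setting of Theorem \ref{affine-chow}, which provides polynomials $p^{(i)}_1,\ldots,p^{(i)}_{m_i} \in K[t_1,\ldots,t_{n_i}]$ whose common vanishing locus in $\mathbb{A}^{n_i}(K)$ equals $\beta_i(X \cap U_i(K))$. Pulling these polynomials back through $\beta_i^{*}$ into $\mathcal{O}(U_i)$, I take $Z_i \subseteq U_i$ to be the reduced closed subscheme cut out by their images. By construction the underlying set of $Z_i$ is $X \cap U_i(K)$, and we have $Z_i^{\mathrm{an}} = X \cap U_i^{\mathrm{an}}$: both are reduced rigid analytic subvarieties of $U_i^{\mathrm{an}}$ with the same set of $K$-points, and reduced rigid subvarieties are determined by their underlying point sets.

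To finish, I would glue the $Z_i$ into a global algebraic subvariety of $V$. On each overlap $U_i \cap U_j$, both $Z_i \cap (U_i \cap U_j)$ and $Z_j \cap (U_i \cap U_j)$ are reduced closed subschemes with the same underlying set $X \cap (U_i \cap U_j)(K)$, and reduced closed subschemes are uniquely determined by their underlying closed sets. Hence the $Z_i$ glue to a reduced closed subvariety $Z \subseteq V$ satisfying $Z^{\mathrm{an}} = X$, so that $X$ is algebraic. The whole substance of the argument is carried by Theorem \ref{affine-chow}; the only routine points to check in the reduction are the compatibility of definability with restriction to affine charts (immediate from Definition \ref{definable-in-varieties}) and the gluing of reduced closed subschemes with matching underlying sets. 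There is thus no genuine obstacle in this step beyond invoking the affine case.
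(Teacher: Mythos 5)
Your reduction to the affine case via a finite affine cover, followed by gluing the reduced closed subschemes $Z_i$ (which are determined by their $K$-points since the $U_i$ are Jacobson and $K$ is algebraically closed), is exactly the argument the paper intends; the corollary is stated there without proof as an immediate consequence of \autoref{affine-chow}. The proposal is correct and takes essentially the same approach.
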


For a \emph{proper} algebraic variety, every closed analytic subvariety is definable in the tame structure of the rigid subanalytic sets. Thus the familiar version of Chow's theorem for proper varieties follows from \autoref{affine-chow}:

\begin{corollary}[Chow's theorem for proper varieties]
Every closed analytic subset of the rigid analytic variety associated to a proper algebraic variety over $K$ is algebraic.
\end{corollary}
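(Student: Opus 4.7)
The plan is to deduce this corollary from \autoref{definable-chow-theorem} by exhibiting a tame structure in which \emph{every} closed analytic subvariety of $V^{\mathrm{an}}$ is automatically definable. The natural candidate is the structure of rigid subanalytic sets discussed in \autoref{background-rigid-subanalytic}, which (as noted earlier in the paper) forms a tame structure on $K$ whenever $K$ is algebraically closed.

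First, I would verify that any closed analytic subvariety $X \subseteq V^{\mathrm{an}}$ is locally subanalytic in the sense of \autoref{subanalytic*-in-rigid-space}. Choose an admissible covering of $V^{\mathrm{an}}$ by admissible affinoids $V^{\mathrm{an}} = \bigcup_i W_i$ together with closed immersions $W_i \hookrightarrow \B^{d_i}$. On each $W_i$, the intersection $X \cap W_i$ is cut out by finitely many analytic functions $f^{(i)}_1, \dots, f^{(i)}_{n(i)} \in \mathcal{O}(W_i)$, i.e.\ by the quantifier-free $L_{\mathrm{an}}$-formula $\bigwedge_j |f^{(i)}_j| \leq |0|$ (equivalently $f^{(i)}_j = 0$) in the separated power series coordinates on $\B^{d_i}$. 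Since these are globally quasi-affinoid semianalytic conditions, $X \cap W_i$ is subanalytic in $R^{d_i}$, so $X$ is locally subanalytic in $V^{\mathrm{an}}$.

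Next I would invoke the properness hypothesis via \autoref{subanalytic-in-proper-rigid}. Since $V$ is proper, the analytification $V^{\mathrm{an}}$ is quasi-compact, and the lemma gives the equivalence that a subset $S \subseteq V(K)$ is subanalytic in the sense of \autoref{subanalytic*-in-varieties} if and only if $a_V^{-1}(S) \subseteq V^{\mathrm{an}}$ is locally subanalytic. Applied to $S = X$ (identifying $X \subseteq V^{\mathrm{an}}$ with its underlying set of $K$-points inside $V(K)$, so $a_V^{-1}(X) = X$), we conclude that $X \subseteq V(K)$ is subanalytic, and hence definable in the tame structure of rigid subanalytic sets.

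Having placed $X$ inside a tame structure, the hypotheses of \autoref{definable-chow-theorem} are satisfied, and that theorem immediately yields that $X$ is algebraic. The only genuinely substantive input beyond citing the two earlier results is the easy observation in the first step that zero loci of analytic functions are quantifier-free definable, which is why I would not expect any real obstacle; the corollary is essentially a packaging of the definable Chow theorem with the transfer lemma \autoref{subanalytic-in-proper-rigid} that exploits quasi-compactness of proper rigid spaces.
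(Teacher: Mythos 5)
Your proposal is correct and follows exactly the route the paper intends: the paper's one-line justification ("for a proper variety, every closed analytic subvariety is definable in the tame structure of rigid subanalytic sets, so apply the definable Chow theorem") is precisely what you unpack via local quantifier-free definability of zero loci, the transfer lemma \autoref{subanalytic-in-proper-rigid} using quasi-compactness, and then \autoref{definable-chow-theorem}. No gaps; your write-up simply supplies the details the paper leaves implicit.
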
 


\bibliographystyle{alpha}
\bibliography{chow}

\textsc{Dept. of Mathematics, University of Toronto, Toronto, Canada}

\textit{E-mail address}: \texttt{abhishek@math.toronto.edu}
\end{document}